\ifdefined\pdfoutput\pdfoutput=1\fi
\documentclass[11pt]{article}

\usepackage{iftex}
\ifPDFTeX
  \usepackage[utf8]{inputenc}
  \IfFileExists{lmodern.sty}{\usepackage[T1]{fontenc}\usepackage{lmodern}}{\PassOptionsToPackage{expansion=false}{microtype}}
\else
  \usepackage{fontspec}
\fi
\usepackage{fvextra}
\usepackage{microtype}
\usepackage{amsmath,amssymb,amsthm,mathtools}
\usepackage{xcolor}
\usepackage[margin=1.08in]{geometry}
\usepackage{enumitem}
\usepackage[
  colorlinks=true,
  linkcolor=black,
  citecolor=black,
  urlcolor=black,
  pdftitle={Uniform convergence of canonical discrete uniformization on equilateral approximations},
  pdfauthor={Aijin Lin and Chunlei Liu}
]{hyperref}

\setlist[enumerate]{leftmargin=2.2em,itemsep=2pt,topsep=4pt}
\setlist[itemize]{leftmargin=2em,itemsep=2pt,topsep=4pt}
\allowdisplaybreaks
\numberwithin{equation}{section}
\theoremstyle{plain}
\newtheorem{theorem}{Theorem}[section]            % label kind: thm
\newtheorem{proposition}[theorem]{Proposition}    % label kind: prop
\newtheorem{lemma}[theorem]{Lemma}                % label kind: lem
\theoremstyle{definition}
\newtheorem{definition}[theorem]{Definition}      % label kind: def
\newtheorem{setup}[theorem]{Set-up}               % label kind: setup

\theoremstyle{remark}
\title{Uniform convergence of canonical discrete uniformization on equilateral approximations}
\author{Aijin Lin and Chunlei Liu}
\date{}

\providecommand{\CC}{\mathbb C} % complex plane
\providecommand{\RR}{\mathbb R} % real line
\providecommand{\ZZ}{\mathbb Z} % integers
\providecommand{\Chat}{\widehat{\mathbb C}} % ideal Riemann sphere
\providecommand{\HH}{\mathbb H} % hyperbolic space
\providecommand{\Tri}{\Delta} % closed unit equilateral reference triangle
\providecommand{\om}{\omega} % exp(pi i/3)
\providecommand{\Lam}{\Lambda} % unit triangular lattice
\providecommand{\Sig}{\Sigma_{\Lambda}} % complete lattice dome
\providecommand{\CH}{\operatorname{CH}} % hyperbolic convex hull
\providecommand{\dist}{\operatorname{dist}} % metric distance, with indicated metric
\providecommand{\diam}{\operatorname{diam}} % diameter
\providecommand{\Area}{\operatorname{Area}} % Euclidean area
\providecommand{\id}{\operatorname{id}} % identity map
\providecommand{\Mod}{\operatorname{Mod}} % conformal modulus
\providecommand{\dd}{\,\mathrm d} % differential in integrals
\providecommand{\bpartial}{\overline\partial} % antiholomorphic derivative
\begin{document}
\maketitle
\begin{abstract}
We prove uniform convergence of the canonical discrete uniformization
maps for every sequence of equilateral polygonal disks whose mesh tends
to zero and whose marked Jordan boundaries converge uniformly.
The limit is the normalized Riemann map, with convergence on the entire
closed reference triangle. This proves Conjecture 7.3 of Luo, Sun and Wu
for the original construction with Delaunay flips.
The proof combines closed-domain compactness, marked lattice limits,
a packing and resistance estimate for logarithmic cusp scales, and an
envelope comparison principle, followed by a local affine energy argument.
\end{abstract}

\tableofcontents
\section{Introduction}\label{sec:01-introduction}

Discrete conformal geometry assigns conformal data to finite polyhedral
surfaces while allowing their triangulations to change. A basic question
is whether the resulting canonical maps recover ordinary conformal maps
as the mesh tends to zero. We study the construction of Luo, Sun and
Wu~\cite[\S7.1, pp.~981--983]{luo2022} for planar polygonal disks
with an equilateral triangulation and three ordered boundary
marks.

Double such a disk along its boundary. In its discrete conformal class,
prescribe curvature \(4\pi/3\) at the three distinguished vertices and zero
at every other vertex, and normalize the doubled area to \(\sqrt3/2\).
The resulting metric is the double of the unit equilateral triangle
\(\Tri=\operatorname{conv}\{0,1,e^{\pi i/3}\}\).
The marked isometry of the associated ideal hyperbolic surfaces,
composed with their vertical projections, gives a map from \(\Tri\)
to the original closed disk. This is the canonical piecewise-projective
map of \cite[pp.~982--983]{luo2022}; its construction permits Delaunay
flips. Convergence would make the conformal limit independent of the
particular equilateral polygonal approximation.

\paragraph{Previous work.}
Circle packings provided an early geometric approach to conformal
approximation. Rodin and Sullivan~\cite[\S\S4--5]{rodin1987} combined ring,
length--area and hexagonal-packing estimates to prove convergence.
He and Schramm~\cite[Theorem~1.1]{he1996} established packing convergence
under their boundary correspondence and shrinking-disk hypotheses
without a uniform bounded-valence assumption. These results explain the
value of separating compactness from microscopic rigidity. The cusp
packing estimates for the canonical maps studied here are proved below
from their decorated hyperbolic geometry.

The finite theory supplies coordinates for this construction.
Luo~\cite[\S1.2 and Theorem~1.2]{luo2004} developed a variational
curvature-flow description of Euclidean vertex scaling.
Bobenko, Pinkall and Springborn~\cite[Definition~2.6.1 and
Proposition~5.3.2]{bobenko2015} related discrete conformality,
piecewise-projective maps and ideal hyperbolic geometry.
The boundary-value formulation of Bobenko, Sechelmann and
Springborn~\cite[Theorems~3.7, 3.9 and Proposition~3.16]{bobenko2016}
gives finite variational problems with the stated uniqueness
qualifications and scale freedoms.
Gu, Luo, Sun and Wu~\cite[Theorem~4.3 and Corollaries~4.7--4.8]{gu2018}
identify finite marked Euclidean polyhedral metrics with decorated
hyperbolic data across Delaunay cells. Their Euclidean uniformization
theorem is an essential input below.
Gu, Guo, Luo, Sun and Wu~\cite[Theorem~3 and Corollary~4]{gu2018a}
establish the hyperbolic counterpart, including surgery along the flow.

Penner's lambda-length coordinates and diagonal-change rule
\cite[Proposition~2.6(a) and Theorem~3.1]{penner1987}
provide the finite-type coordinate background.
Springborn's variational treatment of ideal polyhedra and discrete
uniformization~\cite[Theorem~1.1 and Proposition~7.7]{springborn2020}
supplies curvature gradients and the relevant fiber Hessian; its
convexity assertion concerns those fibers.
Bobenko and Springborn~\cite[Definition~16 and Proposition~17]{bobenko2007}
explain nonnegative cotangent weights for intrinsic Delaunay Laplacians.
These finite structures alone do not give uniform control of infinite
limiting cusp stars when target cells change.

Several convergence results use geometric assumptions adapted to their
discretization. Wu and Zhu~\cite[Definition~1.3 and
Theorems~1.4--1.5]{wu2024} obtain quantitative conformal-factor convergence
for regular geodesic triangulations of smooth closed surfaces.
Luo, Wu and Zhu~\cite[Definition~1.2 and Theorem~1.3]{luo2024a}
treat normalized genus-zero convergence on a smooth sphere with regular
triangulations.
Gurel-Gurevich, Jerison and Nachmias~\cite[Theorem~1.1]{gurelgurevich2020}
prove a logarithmic Dirichlet error estimate for orthodiagonal maps under
weak local mesh geometry and their boundary-data hypotheses.
Binder and Pechersky~\cite[Theorem~2.15]{binder2025}
prove convergence of orthodiagonal tiling maps and extremal lengths for
good interior approximations with four marked prime ends.
Glickenstein and Sidbury~\cite[Theorem~1 and \S6]{glickenstein2025}
give local distortion criteria for convergence of barycentric maps
between corresponding triangulations. Their map classes and geometric
hypotheses must be kept in mind when comparing with the present problem.

A further relevant line of work concerns infinite rigidity.
Wu, Gu and Sun~\cite[Theorem~1]{wu2015} prove planar hexagonal rigidity
under a uniform acute-angle hypothesis.
Dai, Ge and Ma~\cite[Theorem~1.4]{dai2022} obtain Delaunay rigidity on
standard hexagonal combinatorics using a maximum principle with a
hyperbolic interpretation.
Dai and Wu~\cite[Theorem~1.3]{dai2024} treat planar Delaunay rigidity on
a common triangulation under their nondegeneracy assumptions.
Luo, Xu and Zhang~\cite[Theorem~1.3, version~2]{luo2025}
prove inversive-distance packing rigidity on fixed hexagonal
combinatorics with specified regular distance data.
Rivin's finite ideal-polyhedron realization and uniqueness theorems
\cite[Theorems~1.1--1.3]{rivin1994} give a geometric predecessor.
Our limiting problem must also keep every cusp marking, control the
possible lifts of incident ideal arcs, and preserve the limiting angle
identities when finite Delaunay cells vary.

Convex geometry supplies compactness inputs.
Luo and Wu~\cite[Theorems~4.1, 5.1 and 11.2]{luo2024}
give the Euclidean boundary-area bound in the Poincar\'e ball,
intrinsic-distance convergence, and conformal length and area
structures for nonsmooth convex boundaries.
The dimension-sensitive boundary limits studied by Alesker, Katz and
Prosanov~\cite[Proposition~2.5 and Claim~2.7]{alesker2023}
retain a doubled surface in codimension one.
Prosanov~\cite[Lemma~3.28 and the proof of Lemma~5.1]{prosanov2024}
gives related intrinsic-convergence arguments for polyhedral hyperbolic
boundaries. These comparisons show that one must keep both sheets
in a planar degeneration and exclude shortcuts through artificial
truncation caps.

Energy and modulus offer complementary tools.
Ha\"issinsky~\cite[Proposition~2.3 and Appendix~B]{haissinsky2009}
compares graph moduli under bounded-detour hypotheses and analytic
moduli for the specified quasi-packing covers.
Doyle and Snell~\cite[\S1.3.5 and \S\S2.2.2--2.2.3]{doyle1984}
develop electrical variational principles relating energy and
resistance. Below we prove the quantitative resistance estimate
directly on the original unit triangular lattice and combine it with
packing inequalities for the actual canonical geometry.

The issue in \cite[Conjecture~7.3]{luo2022} is therefore convergence
for arbitrary equilateral approximating sequences and the actual
canonical maps. Compactness must extend to a possibly nonsmooth Jordan
boundary. At the microscopic scale, intrinsic isometry must yield marked
planar similarity without assuming that the target retains the original
cells. That conclusion must then identify the limit of the full maps and
of the entire sequence.

\paragraph{The main result.}
We prove Conjecture~7.3 of Luo--Sun--Wu in its original qualitative form.
A marked equilateral disk \((P,T,h;a,b,c)\) is a finite planar polygonal
closed disk with an equilateral triangulation of common side length
\(h>0\) and three distinct boundary vertices in positive cyclic order.
Uniform convergence of Jordan boundaries means that homeomorphic circle
parametrizations converge uniformly to a homeomorphic parametrization of
the limiting boundary. Use the canonical map defined above, taking the
corners \(0,1,e^{\pi i/3}\) to the ordered marked vertices.

\begin{theorem}\label{thm:introduction-main}
Let $\Omega\subset\CC$ be a bounded Jordan domain with three distinct
positively ordered marked boundary points $p,q,r$. Let
$(P_n,T_n,h_n;p_n,q_n,r_n)$ be any sequence of marked equilateral disks
such that $h_n\to0$, their Jordan boundaries converge uniformly to
$\partial\Omega$, and $(p_n,q_n,r_n)\to(p,q,r)$.
Then the canonical discrete uniformization maps
$f_n:\Tri\to P_n$ converge uniformly on the entire closed triangle
$\Tri$ to the normalized Riemann map
$\phi:\Tri\to\overline\Omega$ with the prescribed corner values.
\end{theorem}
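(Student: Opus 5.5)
The plan is the usual two-stage scheme: compactness, then identification of the limit. Since every subsequence will be shown to have a further subsequence converging to the same map $\phi$, the whole sequence converges uniformly.

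\textbf{Compactness.} We view $f_n$ through the doubled decorated hyperbolic picture. The closed disk $P_n$ is the vertical projection of the boundary of the convex hull $\CH$ of the cusps over its vertices in $\HH^3$. The map $f_n$ is obtained from the marked isometry of the ideal surfaces, with both sheets kept. Using the Luo--Wu boundary-area bound and intrinsic-distance convergence, we show the following:
\begin{itemize}
\item The $f_n$ admit a uniform modulus of continuity on all of $\Tri$, including near the corners and the Jordan boundary.
\item This modulus comes from a length--area (Courant--Lebesgue type) argument: the total area of $P_n$ is bounded, so for each small annulus around a point of $\Tri$ some circle has short image.
\item The Jordan-curve hypothesis (uniform parametrizations converging) turns short images of circles crossing $\partial\Tri$ into small diameters of image sets. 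This is the standard Carath\'eodory argument, run uniformly in $n$.
\end{itemize}
Arzel\`a--Ascoli then gives a subsequence $f_n\to f$ uniformly on $\Tri$. The limit $f$ is a continuous map $\Tri\to\overline\Omega$, sends $\partial\Tri$ monotonically onto $\partial\Omega$ with the corners going to $p,q,r$, and has degree one.

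\textbf{Identification.} It remains to show that $f$ is conformal in the interior, hence equals $\phi$ by the three-point normalization. We blow up at an interior point. Rescaling $f_n$ near a point of $\Tri^\circ$ by the local cell scale produces marked lattice limits: infinite configurations with cusps over the unit triangular lattice $\Lam$, yielding a complete lattice dome $\Sig$ together with an intrinsic isometry to a limiting planar Delaunay-type surface. We then show that this limit is a marked planar similarity. The argument has two ingredients.
\begin{itemize}
\item \emph{Cusp scales.} The logarithmic cusp scales (conformal factors) $u_v$ have bounded oscillation at bounded combinatorial distance. This uses a packing inequality: horoball decorations are disjoint in the actual canonical geometry, together with the resistance estimate on $\Lam$, which bounds the Dirichlet energy of $u$ via effective resistance growing like $\log R$.
\item \emph{Rigidity.} An envelope comparison principle, a maximum principle for $u$ that compares with affine envelopes and remains valid when Delaunay cells flip, forces $u$ to be affine. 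It follows that the limit is a similarity.
\end{itemize}
This microscopic rigidity says that $f_n$ is locally almost a similarity at most points. A local affine energy argument then upgrades it to conformality of $f$: the piecewise-affine maps $f_n$ have distortion tending to one outside sets of vanishing area. Lower semicontinuity of energy, or weak convergence of $\bpartial f_n$ to zero in $L^2$, gives $\bpartial f=0$ weakly. Hence $f$ is holomorphic in the interior, and degree one plus the boundary behaviour makes it the Riemann map $\phi$.

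\textbf{Main obstacle.} The hardest step is the microscopic rigidity. The target triangulations change by flips, so the classical fixed-combinatorics hexagonal rigidity theorems do not apply. I would first obtain uniform bounds on the cusp scales through packing plus resistance, so that limits exist and preserve every cusp marking and the angle identities. Only then would I run the envelope comparison on the limit dome to exclude nonaffine scales.
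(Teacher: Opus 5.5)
Your plan names the right ingredients, but the compactness step and the rigidity step would both fail as you describe them.

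\textbf{Compactness.} A Courant--Lebesgue argument gives short image circles only from a bound on the map's Dirichlet energy over the annulus. Bounded image area gives such a bound only for maps that are conformal, or uniformly quasiconformal, for the Euclidean structure of $\Tri$, since in general $|Df|^2\ge2J_f$ goes the wrong way. The canonical $f_n$ is neither. On each piece it is a hyperbolic isometry read in two Klein models (vertical projection of a supporting hemisphere is the Klein projection), and no uniform distortion bound exists before the microscopic analysis. The paper runs the estimate for the inverse instead. It replaces $p_P$ by the conformal face correction $Q_P$, where $\kappa=Q_Pp_P^{-1}$ and $\|\kappa-\id\|_\infty\le h$. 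Then $P\to S_\Tri$ is conformal and vertical projection is $1$-Lipschitz, so
\[
 \int_P|D(g\kappa^{-1})|^2\le2\Area(S_\Tri)\le200\pi .
\]
Here the area bound is Luo--Wu applied to the reflected periodic target hull, not to $P_n$. This gives a modulus for $v_n=g_n\kappa_n^{-1}\phi_n$ on all of $\Tri$ after reflection in its sides, and hence target covering radius $\delta_n\to0$. Only then does the local bound $\Area(p^{-1}(B(z,s)))\le100\pi s^2$ for $s\ge\delta_n$ give target annulus moduli. That yields a modulus for $v_n^{-1}=\phi_n^{-1}\kappa_nf_n$, hence for $f_n$. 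Both directions are needed. The output is a homeomorphic limit $F$ with $g_n\to F^{-1}$ locally uniformly on $\Omega$.

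A merely continuous degree-one limit is not enough. It can collapse interior continua of $\Tri$ onto points of $\partial\Omega$, so blowing up ``at a point of $\Tri^\circ$'' need not give source patches of diverging combinatorial radius. The paper blows up at arbitrary original faces meeting a fixed $K\Subset\Omega$. Its final multiplicity count also uses $\deg(G,U,y)=1$ for the homeomorphism $G=F^{-1}$.

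\textbf{Rigidity.} The estimate you state as the output of packing plus resistance, bounded oscillation at bounded combinatorial distance, is just the ring-type edge bound $r_i/r_j\in[1/9,9]$. It cannot suffice. For small $a\ne0$, the affine factor $u_v=\operatorname{Re}(\overline a v)$ has bounded edge increments and gives near-equilateral, strictly Delaunay vertex-scaled triangles. All upward triangles are similar, and so are all downward ones. Each vertex occupies each corner role once among each family, so every angle sum is exactly $2\pi$. Yet the metric is an incomplete spiral-type configuration, not a similarity. So ``the maximum principle forces $u$ affine, hence a similarity'' is false. Moreover, $u$ itself need not attain a maximum.

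What the paper actually uses is the root-uniform bound $|u_v-u_w|\le A\log(N(v,w)+2)$. It is obtained from disjoint disks $B(y(p),r_p/6)$, which bound the energy of a logarithmic cutoff of the target radius (not of $u$), together with the Fourier resistance estimate on $\Lam$. The comparison principle is then applied to the bounded increment $u_{z+e}-u_z$. Its supremum $\mu$ is attained only after translating and extracting a pointwise limit. The increment is then identically $\mu$, and $k\mu\le A\log(2k+2)$ forces $\mu=0$. The same bound makes the root envelopes $\sup_s(e^{u_i-u_{j(s)}}d_s-(x-a_s)^2)$ uniformly positive with one finite active family. These are envelopes of horoball quadratics, not affine envelopes, and this is what carries $\int_0^6F^{-1/2}=2\pi$ to the limit.

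\textbf{Final step.} $f_n$ is not piecewise affine. The paper uses the affine interpolant $L_n$ of $g_n$ on the original faces. It needs face-shape control uniformly over all faces meeting each compact set, proved by contradiction along arbitrary moving roots, not merely off a set of small area. It also needs a degree count giving $\int_{U'}J_{L_n}\le\Area(\Tri)$ before concluding $\bpartial L_n\to0$ in $L^2$. On an exceptional set with distortion near one, the energy is not controlled.
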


Theorem~\ref{thm:introduction-main} is Theorem~\ref{thm:main}, proved in
Section~\ref{sec:08-main-proof}. The conclusion includes every boundary
point and all three corners. Any approximating sequence satisfying the
stated assumptions is allowed, and the canonical construction retains
its Delaunay flips. The theorem asserts no rate depending only on mesh size.

\paragraph{The proof.}
First we obtain compactness of the actual maps and their inverses on
closed domains. A conformal correction of each equilateral source face
moves points by at most one mesh length. Periodic reflection of the
complete target surface, the convex-area bound, and annulus extremal
length give two-sided continuity estimates and homeomorphic limits.

The microscopic argument uses a quantitative invariant.
Canonical cusp scales give disjoint planar disks and scale comparison
on every original edge. Packing controls a cutoff energy on the original
lattice; its Fourier resistance estimate gives a uniform two-point bound
\[
 |u_v-u_w|\le A\log\bigl(N(v,w)+2\bigr).
\]
Here \(u_v\) is the logarithmic cusp scale, \(A\) is an absolute constant,
and \(N(v,w)=|m|+|n|\) when \(v-w=m+n e^{\pi i/3}\).
The bound is uniform in the choice of the root. It makes limiting horocycle envelopes
positive and finite and restricts their active lifts to a fixed finite
family. Marked intrinsic ball identifications also control winding
classes. Thus the full \(2\pi\) angle identities survive the limit.

A translated increment of the logarithmic factors attains its supremum
after extraction. The envelope angle formula and a strong comparison
principle propagate that maximum; a nonzero constant increment would
contradict logarithmic growth. The limiting scales are therefore
constant. Strict persistence of the marked equilateral cells gives
convergence of the actual canonical maps on closed microscopic faces.
Finally, affine interpolation provides an analytic comparison:
a local degree argument controls multiplicity and energy, and the
antiholomorphic derivative tends to zero. Each homeomorphic limit is
then the normalized Riemann map.

\paragraph{Organization.}
Section~\ref{sec:02-preliminaries} fixes the marked surfaces, maps,
decorations and classical tools.
Section~\ref{sec:03-source-scales} proves source correction, reflection
covering and canonical scale estimates.
Section~\ref{sec:04-compactness} proves closed-domain compactness.
Section~\ref{sec:05-lattice-limits} constructs whole marked lattice limits.
Section~\ref{sec:06-logarithmic-rigidity} proves packing growth, envelope
comparison and logarithmic Liouville rigidity.
Section~\ref{sec:07-microscopic-similarity} proves marked microscopic
similarity. Section~\ref{sec:08-main-proof} gives the complete proof of
the main theorem.

\section{Preliminaries}\label{sec:02-preliminaries}
\begin{setup}\label{setup:canonical}
Set $\om=e^{\pi i/3}$, $\Lam=\ZZ+\ZZ\om$, and
$\Tri=\operatorname{conv}\{0,1,\om\}$. The set $\Tri$ includes its boundary.
A marked equilateral disk $(P,T,h;a,b,c)$ is a finite planar polygonal
closed disk with an equilateral triangulation of common side length $h>0$
and three distinct boundary vertices in positive cyclic order.
The doubled metric is the Euclidean cone metric on the marked sphere
obtained by gluing two copies of $P$ along their boundaries.
Write $V(P)$ for the vertex set and identify a vertex label with its planar position.
A PL metric on a marked surface $(S,V)$ means a flat cone metric whose cone
points are contained in $V$; marked equivalence fixes every point of $V$.
For the marked disk, the canonical discrete conformal metric has curvature $4\pi/3$ at $a,b,c$,
zero at all other vertices, and doubled area $\sqrt3/2$.
It is the metric double of $\Tri$.
\end{setup}

Discrete conformality means vertex scaling through Delaunay triangulations,
allowing changes of triangulation as in \cite[Definition~7.1]{luo2022}.
Replace a Euclidean Delaunay face by the ideal hyperbolic triangle with
the same vertices in upper half-space, and glue along the extensions of
the Euclidean edge identifications. This gives a complete punctured hyperbolic
surface. Restrict to the corresponding halves of these punctured hyperbolic surfaces,
denoted $H_P$ and $H_\Tri$. Let $p_P:H_P\to P$ and
$p_\Tri:H_\Tri\to\Tri$ be the vertical
projections, and let $I:H_P\to H_\Tri$ be the canonical marked intrinsic
isometry. The maps extend to the filled cusp points and closed disks, with
\[
 g=p_\Tri\circ I\circ p_P^{-1},\qquad
 f=p_P\circ I^{-1}\circ p_\Tri^{-1}=g^{-1}.
\]
The corners correspond to $a,b,c$.
These are the piecewise-projective maps of \cite[\S7.1]{luo2022}.
They are not defined by affine interpolation on the original triangulation.

For a closed ideal set $Y\subset\Chat$, $\CH(Y)$ denotes its hyperbolic
convex hull and $\partial\CH(Y)$ its intrinsic boundary. A two-dimensional
hull is interpreted as the metric double of its convex planar surface.
The whole lattice surface is
$\Sig=\partial\CH(\Lam\cup\{\infty\})$.
Its standard decoration has cusp circumference $6$ at every lattice label;
corner horocycles in each unit ideal face have corner arc length $1$.
Adding all finite cusps and the residual end at infinity to $\Sig$ gives
its filled source compactification, denoted $\widehat\Sig$; the lattice
facewise conformal construction identifies it with $\Chat$.
The source coordinate distance is
$N(v,w)=|m|+|n|$ when $v-w=m+n\om$.

A decoration specifies positive intrinsic horocycle circumferences, which
may be formally extended beyond embedded cusp neighborhoods. The lambda
length of an ideal arc is $\exp(D/2)$, where $D$ is the signed intrinsic
distance between its endpoint horocycles. In an upper-half-plane chart,
a finite-center horocycle with center $a\in\RR$ and Euclidean diameter
$d>0$ has signed Busemann function
$B_{a,d}(x,y)=\log(((x-a)^2+y^2)/(dy))$.
The root horocycle $y=1$ has function $-\log y$.
A genuine Voronoi neighbor shares an arc of positive length; optional
polygon diagonals are excluded from this terminology.

Uniform convergence of Jordan boundaries is understood in the sense of
\cite[p.979]{luo2022}: homeomorphic circle parametrizations converge
uniformly to a homeomorphic parametrization of the limiting boundary.
All compact-containment statements $K\Subset\Omega$
refer to the physical planar domain. We use
$\partial L=(L_x-iL_y)/2$ and $\bpartial L=(L_x+iL_y)/2$ for complex-valued
maps; $|DL|$ is the Hilbert--Schmidt norm and
$J_L=|\partial L|^2-|\bpartial L|^2$.

For the classical topological tools used below, our references are
\cite[Propositions~1.30, 1.33--1.34, pp.~60--62]{hatcher2002} for
covering lifts and their uniqueness,
\cite[Proposition~2B.1 and the Schoenflies paragraph, p.~169]{hatcher2002}
for Jordan separation and disks, and
\cite[\S2.2, pp.~134--136, Proposition~2.30]{hatcher2002} for
homotopy invariance of degree and the sum of local degrees.
Sobolev slicing and reflection use the ACL (absolutely continuous on
lines) characterization in
\cite[\S2.6, Theorem~2.36, pp.~55--59; Example~2.39, p.~59;
Remark~2.41, p.~60]{kinnunen2026}; the matching traces and logarithmic
cutoffs at isolated vertices are supplied in the proof below.
For compactness we use the classical Arzel\`a--Ascoli theorem and
compactness of closed subsets in a compact metric space, in the forms
recorded in \cite[Theorem~5.8, pp.~20--21, and \S5, p.~16]{luo2024}.
The plane and sphere automorphism facts and the local uniform limit
theorem for holomorphic functions are recorded in
\cite[Theorem~2.13, Corollary~2.14, and Theorems~1.4, 1.22]{mcmullen2025}.

\section{Source geometry and canonical cusp scales}\label{sec:03-source-scales}

We keep the notation for marked equilateral disks and their canonical
maps. We first identify the conformal structure of the source, then unfold
the target and compare its cusp decorations. These comparisons give the
scale bounds on original edges, including edges removed by Delaunay flips.
The finite input is \cite[Theorem~7.2 and pp.~981--982]{luo2022}, with
\cite[Theorem~1.2, Proposition~4.1, Theorem~4.3 and Corollary~4.8]{gu2018}
for the decorated correspondence and its ideal-surface realization.

\begin{theorem}[Finite discrete uniformization]\label{thm:finite-uniformization}
Let $S$ be a closed connected surface, let $V\subset S$ be finite and nonempty,
and let $(S,V)$ be equipped with a Euclidean
polyhedral metric $d$. For a function $K:V\to(-\infty,2\pi)$ with
$\sum_{v\in V}K(v)=2\pi\chi(S)$, there is a metric discretely conformal
to $d$ with curvature $K$, unique up to scale and a marked isometry
homotopic to the identity. If $\chi(S\setminus V)<0$, the marked PL
metrics correspond to decorated complete finite-area cusp hyperbolic
metrics: on every Delaunay cell, Euclidean lengths are lambda lengths and
the Euclidean cells correspond to the decorated Voronoi/Delaunay cells.
These identifications agree on overlaps under flips. The underlying marked
hyperbolic metric is unchanged exactly within a discrete conformal class.
\end{theorem}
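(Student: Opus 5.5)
This theorem is the finite input quoted from \cite[Theorem~7.2]{luo2022} and \cite[Theorem~1.2, Proposition~4.1, Theorem~4.3, Corollary~4.8]{gu2018}. The plan is to assemble it from those sources rather than reprove it, and to check that the hypotheses match our setting. The argument has two halves. First comes the decorated correspondence, valid when $\chi(S\setminus V)<0$. Then comes the variational existence and uniqueness statement for prescribed curvature.

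\emph{Step 1: the correspondence.} Given a marked PL metric $d$ on $(S,V)$, I would take a Delaunay triangulation $T$ of $d$. Each Euclidean triangle is replaced by the ideal hyperbolic triangle in upper half-space with the same vertices. The extended Euclidean edge identifications are then used as gluings. Each face carries the decoration by the horospheres of Euclidean height one above its vertices. Its lambda lengths therefore equal its Euclidean edge lengths, by Penner's formula $\lambda=\exp(D/2)$ applied to horospheres of equal height. The Delaunay condition is the empty-circumcircle condition. It translates into the statement that the faces form the decorated Delaunay (Epstein--Penner) decomposition, so that the Euclidean cells correspond to the Voronoi/Delaunay cells \cite[Theorem~4.3]{gu2018}.

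The shear coordinates on each edge then sum to zero around every vertex. This is because the Euclidean angle sum closes up around the vertex, so the glued surface is a complete hyperbolic metric with cusps. Compatibility under flips is the next point. When $T$ is not unique, a flip inside a cocircular quadrilateral preserves both the Euclidean quadrilateral and the ideal quadrilateral. The two identifications therefore agree \cite[Corollary~4.8]{gu2018}. Finally, vertex scaling multiplies lambda lengths by $e^{(u_i+u_j)/2}$ across all Delaunay triangulations. This is exactly a change of decoration, so the underlying hyperbolic metric is an invariant of the discrete conformal class. Conversely, equal hyperbolic metrics give equal decorated Delaunay data up to decoration change, which gives the ``exactly'' clause.

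\emph{Step 2: existence and uniqueness.} I would fix the cusp hyperbolic metric from Step 1. The decorations then form the space $\RR^V$ of logarithmic horocycle sizes. The Euclidean curvature $K_u$ of the associated PL metric is the gradient of a convex function on this space. That function is built from Springborn's ideal-polyhedron volume functional \cite[Proposition~7.7]{springborn2020}, and it is $C^1$ across the flip walls. It is strictly convex transverse to the scaling direction $(1,\dots,1)$. Existence then follows from properness of the modified energy $E(u)-\sum_v K(v)u_v$ restricted to the hyperplane $\sum u_v=0$.

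Properness is the step I expect to be the main obstacle. It needs a degeneration analysis as $u$ leaves compacta: some vertex scales go to $\pm\infty$ and the Delaunay triangulation changes. This is handled in \cite[Theorem~1.2]{gu2018}. There the condition $K(v)<2\pi$ together with Gauss--Bonnet $\sum K=2\pi\chi(S)$ forces the energy to grow along every ray.

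Uniqueness comes from strict convexity. The critical point is unique modulo scaling, and the marked isometry class is homotopic to the identity because the construction never leaves the fixed marked cusp surface.

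For $\chi(S\setminus V)\ge0$ the existence part must be checked separately. Our application is a sphere with at least three cone points, so $\chi(S\setminus V)<0$ and this case never arises. I would remark that only that case is used.
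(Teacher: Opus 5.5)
Your proposal takes the same route as the paper, which gives no proof of this theorem but quotes it from \cite[Theorem~7.2]{luo2022} together with \cite[Theorem~1.2, Proposition~4.1, Theorem~4.3 and Corollary~4.8]{gu2018}, exactly the sources you assemble. Two small corrections to your sketch: completeness of the glued cusp surface is automatic for lambda-length gluings and has nothing to do with the cone angles, because the extended Euclidean edge maps preserve the diameter-one horospheres (equivalently, the shears telescope around each vertex); and the case $\chi(S\setminus V)\ge0$ is vacuous rather than something to check separately, since $K<2\pi$ and $\sum_{v\in V}K(v)=2\pi\chi(S)$ already force $\chi(S)<|V|$.
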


The doubled marked sphere has at least three punctures, and hence negative
Euler characteristic once the punctures are removed. Thus the source and
target are two decorations of one marked hyperbolic surface. The
canonical map uses this marked identification and the vertical projections,
as in \cite[pp.~982--983]{luo2022}.

We record the complex-analysis results used to glue the conformal face
maps. The first combines \cite[Theorems~4.1 and~4.5]{mcmullen2025}
with the disk-automorphism normalization in
\cite[Theorems~2.22--2.24]{mcmullen2025}.

\begin{theorem}[Riemann mapping and Jordan extension]\label{thm:jordan-riemann}
A bounded Jordan domain is conformally equivalent to the unit disk, and
a conformal equivalence extends to a homeomorphism of the closures.
Three distinct compatible boundary values determine the normalized
conformal equivalence uniquely.
\end{theorem}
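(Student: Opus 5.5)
The statement combines three classical facts, and I would assemble them from the results the paper already cites rather than build anything new. The plan has three steps: existence of a conformal equivalence, boundary extension, and uniqueness under the three-point normalization. Throughout, let $G\subset\CC$ be a bounded Jordan domain and $\mathbb D$ the unit disk.

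\emph{Existence.} $G$ is a proper, nonempty, simply connected open subset of $\CC$. Simple connectivity comes from the Jordan curve theorem together with Schoenflies: $\overline G$ is a closed disk, as in the Jordan separation and Schoenflies facts recorded from \cite{hatcher2002}. The Riemann mapping theorem, \cite[Theorem~4.1]{mcmullen2025}, then gives a conformal bijection $\psi:\mathbb D\to G$.

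\emph{Boundary extension.} This is Carath\'eodory's theorem, \cite[Theorem~4.5]{mcmullen2025}, which gives a homeomorphic extension $\overline{\mathbb D}\to\overline G$. If I had to argue it directly, I would proceed as follows:
\begin{enumerate}
\item Show that $\psi$ extends continuously to $\partial\mathbb D$. By the length--area (Wolff) lemma, for each $\zeta\in\partial\mathbb D$ there are radii $\rho_k\to0$ such that the images of the crosscuts $\{|z-\zeta|=\rho_k\}\cap\mathbb D$ have lengths tending to zero.
\item Use local connectivity of the Jordan curve $\partial G$ to conclude that these short image crosscuts cut off pieces of $G$ of small diameter. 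This forces continuity of the extension at $\zeta$.
\item Prove injectivity on the boundary. If two boundary points had the same image, the two radii to them would map to a Jordan curve through a single boundary point of $G$. That curve would bound a region whose boundary meets $\partial G$ in one point, and this contradicts the fact that $\psi$ takes boundary arcs onto nondegenerate arcs.
\item A continuous bijection of compact Hausdorff spaces is a homeomorphism, so the extension $\overline{\mathbb D}\to\overline G$ is a homeomorphism.
\end{enumerate}
I expect step 3, boundary injectivity, to be the main obstacle in a self-contained argument. It is exactly where the Jordan hypothesis, and not mere local connectivity, is used. Here it is simply the cited theorem.

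\emph{Normalization and uniqueness.} The extension of $\psi$ is an orientation-preserving homeomorphism on the boundary, so it carries positively ordered triples to positively ordered triples; this is what ``compatible'' means. Take two conformal equivalences $\psi_1,\psi_2:\mathbb D\to G$, each agreeing at three distinct positively ordered boundary points. Then $\psi_2^{-1}\circ\psi_1$ is a disk automorphism, hence a M\"obius map by \cite[Theorems~2.22--2.24]{mcmullen2025}, and it extends to fix three distinct points of $\partial\mathbb D$. A M\"obius transformation with three fixed points is the identity, so $\psi_1=\psi_2$.

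For existence of the normalized map, choose the unique disk automorphism sending the three prescribed points of $\partial\mathbb D$ to the $\psi$-preimages of the three prescribed points of $\partial G$. Such an automorphism exists because disk automorphisms act simply transitively on positively ordered boundary triples. Precomposing $\psi$ with it gives a conformal equivalence with the prescribed boundary values.

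The same conclusion holds for $\Tri$ in place of $\mathbb D$: compose with the homeomorphic extension of a Riemann map $\mathbb D\to\operatorname{int}\Tri$ given by the previous step.
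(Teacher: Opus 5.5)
Your proposal is correct and follows the paper, which also proves this statement only by citation: McMullen's Riemann mapping theorem and Carath\'eodory extension theorem (Theorems~4.1 and~4.5), combined with the disk-automorphism normalization of Theorems~2.22--2.24. In your optional direct sketch, the contradiction in step~3 actually comes from the extension being constant on an entire boundary arc, which the boundary uniqueness theorem rules out (for instance by Schwarz reflection). Since you defer to the cited theorem, this is not a gap.
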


This determines the conformal map of each closed ideal face from its
vertex values.
The local extension across a face edge is
\cite[Theorem~1.54 and the following analytic-arc remark]{mcmullen2025}.

\begin{theorem}[Schwarz reflection]\label{thm:reflection}
A holomorphic function on one side of a real interval that extends
continuously to the interval with real boundary values extends holomorphically
across it by complex conjugation. The corresponding local assertion holds
across real-analytic boundary arcs after conformal coordinate changes.
\end{theorem}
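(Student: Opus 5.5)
The statement just given is the classical Schwarz reflection principle, which the paper cites from McMullen. My plan is to prove the interval case directly by Morera's theorem. I will then reduce the real-analytic-arc case to it by a conformal change of coordinates.

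\emph{Interval case.} Let $U$ be a domain symmetric under $z\mapsto\bar z$ with $U\cap\RR=I$, and set $U^+=U\cap\{\operatorname{Im}z>0\}$. Let $F$ be holomorphic on $U^+$, continuous on $U^+\cup I$, and real on $I$. Define $\tilde F(z)=F(z)$ on $U^+\cup I$ and $\tilde F(z)=\overline{F(\bar z)}$ on the reflected half $U^-$. The two definitions agree on $I$ because $F$ is real there, so $\tilde F$ is continuous on $U$. It is holomorphic on $U^-$, since conjugating twice makes it complex-differentiable with derivative $\overline{F'(\bar z)}$. By Morera's theorem it then suffices to check that $\int_{\partial R}\tilde F\,dz=0$ for every small closed triangle or rectangle $R\subset U$.

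If $R$ lies in one open half, this is Cauchy's theorem. If $R$ meets $I$, cut it along the real axis into pieces lying in the closed upper and lower halves. For each piece, shift the edge lying on $I$ by $\pm i\varepsilon$ into the open half. Cauchy's theorem gives integral zero for the shifted contour. Uniform continuity of $\tilde F$ on the compact set $R$ lets $\varepsilon\to0$, so the original contour integral also vanishes. Hence $\tilde F$ is holomorphic on $U$. This limiting step, justifying the passage to the boundary edge using only continuity up to $I$, is the only delicate point, and uniform continuity handles it.

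\emph{Analytic-arc case.} Let $\gamma$ be a real-analytic arc, so locally $\gamma=\psi(I)$ for a function $\psi$ that is holomorphic and injective on a symmetric neighborhood of $I$ and real-analytic on $I$. Complexifying the parametrization gives such a $\psi$. Shrinking the neighborhood, $\psi$ carries the upper half onto one side of $\gamma$. If a holomorphic function $G$ on one side of $\gamma$ extends continuously with values on another real-analytic arc $\gamma'=\psi'(I')$, I will consider $F=\psi'^{-1}\circ G\circ\psi$. This $F$ is holomorphic on the upper half, continuous to $I$, and real on $I$.

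The interval case extends $F$ holomorphically across $I$. Then $\psi'\circ\tilde F\circ\psi^{-1}$ extends $G$ across $\gamma$, and the extension is given by the composite reflection $\psi\circ\overline{(\cdot)}\circ\psi^{-1}$.

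The main care needed here is to choose neighborhoods small enough that three conditions hold: $\psi$ and $\psi'$ are injective, $\psi$ carries the upper half onto the correct side of $\gamma$, and $G$ maps near $\gamma$ into the domain of $\psi'^{-1}$. The last condition follows from continuity of $G$ up to the arc, after restricting to a small enough subarc.
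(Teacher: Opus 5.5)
Your argument is correct. The paper does not prove this statement; it quotes the classical reflection principle from McMullen's notes (Theorem~1.54 and the remark on analytic arcs), and your Morera argument followed by local straightening of the two arcs is the standard proof of that result. Two small precisions: the extension of $G$ is $\rho'\circ G\circ\rho$, where $\rho=\psi\circ\overline{(\cdot)}\circ\psi^{-1}$ and $\rho'$ is the analogous reflection in $\gamma'$ (not $\rho$ alone), and the parametrizations should have nonvanishing derivative on the real intervals so that their complexifications are locally injective.
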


The reflection statement identifies the holomorphic extensions from
adjacent faces. At a filled cusp we use
\cite[Theorem~1.31]{mcmullen2025}.

\begin{theorem}[Removable punctures]\label{thm:removable}
A holomorphic function on a punctured disk that is bounded near its
puncture extends holomorphically over the puncture.
\end{theorem}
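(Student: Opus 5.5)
The plan is the classical argument of Riemann, via an auxiliary function that vanishes to second order at the puncture. Let $F$ be holomorphic on the punctured disk $D^*=\{0<|z-a|<\rho\}$, with $|F|\le M$ on $0<|z-a|<\rho_0$ for some $\rho_0\le\rho$. Define
\[
 G(z)=(z-a)^2F(z)\quad (z\in D^*),\qquad G(a)=0 .
\]
On $D^*$ the function $G$ is holomorphic as a product of holomorphic functions. At $z=a$ the difference quotient satisfies
\[
 \Bigl|\frac{G(z)-G(a)}{z-a}\Bigr|=|z-a|\,|F(z)|\le M|z-a|\to0 .
\]
Hence $G$ is complex differentiable at $a$ with $G'(a)=0$, so $G$ is complex differentiable on the whole disk $D=\{|z-a|<\rho\}$.

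The second step upgrades complex differentiability at every point to analyticity. By Goursat's theorem (or Morera's theorem together with continuity of $G$), $G$ has a convergent power series $G(z)=\sum_{k\ge0}c_k(z-a)^k$ on $D$. The conditions $G(a)=0$ and $G'(a)=0$ give $c_0=c_1=0$. Then $\widetilde F(z)=\sum_{k\ge2}c_k(z-a)^{k-2}$ is a power series with the same radius of convergence, so it is holomorphic on $D$. It agrees with $G(z)/(z-a)^2=F(z)$ on $D^*$, and therefore it is the required holomorphic extension of $F$ over the puncture.

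An alternative route uses the Laurent expansion $F=\sum_{k\in\ZZ}a_k(z-a)^k$ on $D^*$. The coefficient formula over the circle $|z-a|=r$ gives $|a_k|\le Mr^{-k}$ for $0<r<\rho_0$. For $k<0$, letting $r\to0$ forces $a_k=0$, so the principal part vanishes and the series defines the extension. There is no real obstacle in either route. The only point needing care is the appeal to Goursat's theorem, since $G$ is known to be differentiable at $a$ but not yet known to be $C^1$ there. I would cite the form of the theorem in \cite[Theorem~1.31]{mcmullen2025}.
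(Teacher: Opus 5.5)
Your proof is correct. The difference-quotient estimate makes $G=(z-a)^2F$ complex differentiable on all of $D$. Goursat's theorem then gives a power series for $G$ on the whole disk about $a$, and dividing out $(z-a)^2$ produces the extension. The Laurent route, with the bound $|a_k|\le Mr^{-k}$ as $r\to0$, is equally valid and slightly shorter.

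The paper gives no argument for this statement. It imports it verbatim from Theorem~1.31 of McMullen's notes as a classical input, so your proposal simply supplies the standard proof behind that citation.

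One correction to your last sentence. In the paper, that Theorem~1.31 is cited as the removable-singularity theorem itself, so you should not invoke it for the Goursat step; that would make the argument circular. Cite Goursat's lemma instead, or Cauchy's theorem for functions continuous on a disk and holomorphic off one point. Alternatively, use your Laurent-coefficient route, which needs no differentiability at the exceptional point at all.
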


Boundedness in a local coordinate therefore suffices to fill the cusp
conformally once the map is a homeomorphism.

We also use the classical annulus uniformization and extremal-length
duality in \cite[Theorems~6.1 and~6.5, pp.~59,~63]{mcmullen2021}.

\begin{theorem}[Annulus modulus]\label{thm:annulus-duality}
Let $A$ be a plane annulus bounded by two disjoint Jordan curves.
There are $0<r<R<\infty$ and a conformal equivalence of $A$ with
$\{r<|z|<R\}$. The extremal length of curves joining the boundary
components is $\Mod(A)=(2\pi)^{-1}\log(R/r)$; the extremal length of
essential separating closed curves is $1/\Mod(A)$.
\end{theorem}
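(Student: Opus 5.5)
This is the classical annulus theorem; I would reduce it to the Riemann mapping theorem together with a direct length–area computation on a round annulus.

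\emph{Uniformization.} Let \(A\) be bounded by disjoint Jordan curves \(\gamma_0\) (inner) and \(\gamma_1\) (outer). By Jordan separation \(\gamma_1\) bounds a Jordan domain \(D_1\supset\overline{D_0}\), where \(D_0\) is the Jordan domain bounded by \(\gamma_0\). Theorem~\ref{thm:jordan-riemann} maps \(D_1\) onto the unit disk, with a homeomorphic extension to the closures, carrying \(\gamma_0\) to a Jordan curve in the open disk. Applying Theorem~\ref{thm:jordan-riemann} to the complement of that curve's interior on the sphere, after inversion, does not directly straighten both boundaries at once. I would therefore take the standard route through the universal cover.
\begin{enumerate}
\item The doubly connected domain \(A\) has infinite cyclic fundamental group. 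Its universal cover is simply connected and hyperbolic, since \(A\) omits at least two points, so it is the disk, or equivalently the upper half-plane \(\HH\).
\item The deck group is cyclic and generated by a single Möbius automorphism \(\tau\) of \(\HH\). Possible types are elliptic, parabolic, or hyperbolic.
\item Elliptic is excluded because the action is free.
\item Parabolic is excluded by boundedness. A parabolic generator gives a punctured disk as quotient, and Theorem~\ref{thm:removable} would fill the puncture. But both boundary components are nondegenerate continua, which contradicts this.
\item Hence \(\tau(z)=\lambda z\) with \(\lambda>1\). The map \(z\mapsto \exp(2\pi i\log z/\log\lambda)\) descends to a conformal equivalence of \(A\) with a round annulus \(\{r<|z|<R\}\).
\end{enumerate}

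\emph{Extremal length on the round annulus.} Extremal length is a conformal invariant, so it suffices to compute it on \(\{r<|z|<R\}\).

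For curves joining the boundaries, I would test the metric \(\rho=1/|z|\). Every such curve has \(\rho\)-length at least \(\log(R/r)\), and \(\int\rho^2=2\pi\log(R/r)\). This gives the lower bound \(\log(R/r)/2\pi\).

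For the upper bound, take any admissible \(\rho\) with every radial segment of \(\rho\)-length at least \(1\). Cauchy--Schwarz on each ray gives
\[
 1\le\Bigl(\int_r^R\rho\,\mathrm dt\Bigr)^2\le \log(R/r)\int_r^R\rho^2t\,\mathrm dt .
\]
Integrating over the angle yields \(\int\rho^2\ge 2\pi/\log(R/r)\). Both bounds together give \(\Mod(A)=(2\pi)^{-1}\log(R/r)\).

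For essential closed curves, the same metric \(1/|z|\) gives length at least \(2\pi\), since every essential curve winds once in \(\log z\). Cauchy--Schwarz over circles then yields the reciprocal value \(2\pi/\log(R/r)=1/\Mod(A)\).

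\emph{Main obstacle.} The delicate step is excluding the parabolic case and matching the boundary components of \(A\) with the circles \(|z|=r,R\). I would handle this by noting that a neighborhood of each boundary Jordan curve in \(A\) must map to a neighborhood of a full circle, not a puncture. Otherwise Theorem~\ref{thm:removable} extends the inverse map, and a bounded injective extension would collapse a nondegenerate Jordan curve to a point, which is a contradiction.
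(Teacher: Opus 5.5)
The paper gives no proof of this statement; it quotes it as classical from McMullen's notes (Theorems~6.1 and~6.5 there). There is therefore nothing to compare against beyond the citation.

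Your argument is the standard proof, and it is correct. The deck group of the universal cover is infinite cyclic, and an elliptic generator is excluded by freeness. A parabolic generator would make $A$ a punctured disk, and the removable-singularity theorem would then give $A$ an isolated boundary point. That contradicts $\partial A$ being two Jordan curves; this is the precise form of your ``collapse'' remark. The hyperbolic quotient $\{\operatorname{Im} z>0\}/\langle z\mapsto\lambda z\rangle$ is the round annulus $\{e^{-2\pi^2/\log\lambda}<|w|<1\}$. Your length--area bounds, with $\rho=1/|z|$ and Cauchy--Schwarz on rays and on circles, are exactly right.

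Two points deserve a line each in a written version.
\begin{itemize}
\item Identifying the universal cover with the disk invokes the full uniformization theorem. More economically, place $0$ in the inner complementary Jordan disk. A component of $\exp^{-1}(A)$ is then a simply connected proper plane domain covering $A$, because the core loop of $A$ winds once about $0$. The planar Riemann mapping theorem then suffices.
\item Conformal invariance of the crossing family needs the two boundary curves of $A$ to correspond to the two boundary circles. This follows because each collar $\{r<|w|<r+\delta\}$ pulls back to a connected set. Its decreasing limit continuum lies in $\partial A$, hence in a single boundary curve, and the two collars together must exhaust $\partial A$.
\end{itemize}

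Your opening false start through Theorem~\ref{thm:jordan-riemann} can simply be deleted.
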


The reciprocal formula allows us to compare annuli by inclusion of their
families of separating curves.

\begin{proposition}\label{prop:source-correction}
For every marked equilateral disk, the cusp-filled associated hyperbolic
surface admits a face-preserving conformal identification with the doubled
Euclidean cone surface, commuting with the doubling involution.
On either copy the induced correction $\kappa:P\to P$ fixes all vertices,
preserves every closed face and edge, and satisfies
$\|\kappa-\id\|_\infty\le h$.
If $h<r<R$, $R-r>2h$, and $\overline B(z,R+h)\subset\operatorname{int}P$,
the conformal modulus $M$ of the cusp-filled lift of
$\{r<|x-z|<R\}$ satisfies
\[
 \frac1{2\pi}\log\frac{R-h}{r+h}\le M\le
 \frac1{2\pi}\log\frac{R+h}{r-h}.
\]
\end{proposition}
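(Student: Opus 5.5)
We build the identification one face at a time and then glue. Take an equilateral face $\sigma$ of $T$ with vertices $v_0,v_1,v_2$, and let $\tau$ be the ideal hyperbolic triangle with the same vertices in upper half-space, as in the construction preceding Theorem~\ref{thm:finite-uniformization}. Since $T$ is equilateral, it is Delaunay, so these ideal faces, glued by the extensions of the Euclidean edge identifications, form the associated hyperbolic surface. The closed ideal triangle $\tau$ with its three cusps filled is a conformal closed disk. We take $\psi_\sigma:\overline\tau\to\sigma$ to be the conformal map that is a homeomorphism of closures and sends each filled cusp to the corresponding vertex. It exists and is unique by Theorem~\ref{thm:jordan-riemann}, applied to the interior of $\sigma$ and a conformal disk model of $\tau$ with the three boundary values prescribed.

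The key symmetry is that the ideal triangle and the equilateral triangle have the same full symmetry group $S_3$. Uniqueness of the normalized map then forces $\psi_\sigma$ to commute with all six symmetries. In particular, on each edge $\psi_\sigma$ is determined by symmetry up to the parametrization of that edge. The hyperbolic edge gluings in the decorated construction match geodesic edges with midpoints (shift zero) corresponding to Euclidean midpoints, because all lambda lengths are equal. Hence the two face maps on adjacent faces agree along the shared edge.

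Across each edge we apply Theorem~\ref{thm:reflection}: both faces are reflected images of one another in the edge, and the face maps are equivariant. So the glued map is conformal across open edges. At a filled cusp the glued map is a homeomorphism and is bounded in a local coordinate, so Theorem~\ref{thm:removable} fills it conformally, using the Euclidean cone coordinate $z^{2\pi/\theta}$. The whole construction is symmetric under swapping the two copies of $P$, so the identification commutes with the doubling involution, and boundary edges are handled by the same reflection.

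The induced correction $\kappa$ on either copy is the composite of the identification with the vertical projection, restricted to faces. It maps each closed face $\sigma$ to itself, fixes its vertices, and preserves each edge. Any point $x\in\sigma$ and $\kappa(x)\in\sigma$ lie in a triangle of diameter $h$, which gives $\|\kappa-\id\|_\infty\le h$.

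For the modulus, let $A=\{r<|x-z|<R\}$ and let $\tilde A$ be its lift. The lift is the preimage of $A$ under $\kappa$ composed with the identification, so it is the conformal image of $\kappa^{-1}(A)$ in the conformal structure. Because $\|\kappa-\id\|\le h$, we get the inclusions
\[
\{r+h<|x-z|<R-h\}\subset\kappa^{-1}(A)\subset\{r-h<|x-z|<R+h\}.
\]
The hypothesis $\overline B(z,R+h)\subset\operatorname{int}P$ ensures that everything stays inside one copy. The doubled cone surface is flat away from the vertices, and the vertices inside the disk are removable, so conformally this region is a planar domain. Is the conformal structure of the doubled cone surface the flat one on $P$? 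Yes: the interior vertices have cone angle $2\pi$, so the structure is the flat planar one. Monotonicity of modulus under inclusion of separating curve families (Theorem~\ref{thm:annulus-duality}) then gives the two bounds, with $R-r>2h$ guaranteeing that the inner annulus is nondegenerate.

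The main obstacle is the edge compatibility: we must verify that the gluing of ideal faces matches symmetric edge points, and that the hyperbolic edge extension agrees with the Euclidean one.
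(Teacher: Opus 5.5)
Your proposal is correct and follows essentially the paper's route: normalized face maps made $S_3$-equivariant by uniqueness, agreement of traces on shared edges, Schwarz reflection across edges and removable singularities at the filled cusps, face preservation for the bound $\|\kappa-\id\|_\infty\le h$, and a sandwich between two round annuli for the modulus. Your $\kappa$ runs in the opposite direction to the paper's, which is harmless because both $\kappa$ and $\kappa^{-1}$ preserve closed faces. The paper also makes explicit, and you leave implicit, that both inclusions are essential annulus inclusions; this follows from $B(z,s-h)\subset\kappa^{\pm1}(B(z,s))\subset B(z,s+h)$, and your shear-zero argument from equal lambda lengths is a valid substitute for the paper's observation that the edge gluing preserves the top of each semicircle.
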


The correction changes a point only within its source face. Consequently
round annuli in the source have asymptotically the same modulus as their
filled hyperbolic lifts when the mesh is small compared with their radii.

\begin{proof}
We construct a normalized conformal map on each face, check that the maps
extend across edges and cusps, and compare the resulting annulus with two
round annuli. Write $S_P$ for the Euclidean double and $H_P^{\mathrm d}$
for its associated punctured hyperbolic double. Let
$p_P^{\mathrm d}:\overline{H_P^{\mathrm d}}\to S_P$ be vertical projection
on its conformal compactification, extended at the vertices. Its
restriction to the chosen half is the map $p_P$ from the preliminaries.

\emph{The face maps agree on edges.}
The compactification of an ideal triangle in the disk model is a Jordan
disk. By Theorem~\ref{thm:jordan-riemann}, there is a unique normalized
conformal map to its equilateral Euclidean counterpart that extends
homeomorphically to the boundary and takes the three ideal vertices to
their designated vertices. Denote this map by $\psi$.
Every permutation of the three vertices
is realized by an isometry of each triangle. Conjugating $\psi$ by a
matching pair of these isometries gives another conformal map with the
same vertex values, also when both isometries reverse orientation.
Uniqueness therefore makes $\psi$ equivariant under all the paired
symmetries.

On the ideal edge above a Euclidean side of length $h$, the symmetry
interchanging its endpoints fixes the top of the semicircle, at height
$h/2$. The original edge identification preserves this distinguished
point and the prescribed endpoints. These data determine the isometry
of the ideal edge. The equivariance just proved therefore makes the
traces of the two incident face maps agree. The face maps glue to a
homeomorphism $Q:\overline{H_P^{\mathrm d}}\to S_P$ with the prescribed
vertex values.

\emph{The glued map is conformal at edges and vertices.}
Develop two incident ideal faces into the hyperbolic plane on opposite
sides of their shared geodesic. Develop their Euclidean counterparts on
opposite sides of the shared straight edge. The two reflections interchange the face maps. Theorem~\ref{thm:reflection} makes their common
extension holomorphic across the edge. Since the extension is locally
one-to-one, its derivative there does not vanish.

At each ideal vertex, take the congruent horocyclic sectors tangent at the
distinguished edge points, and shrink them by one common factor. Their
endpoints match along every glued edge. The sectors around a vertex
form a horodisk modulo a nonzero parabolic translation. If $\zeta$ is a
coordinate in that horodisk and $\ell>0$ is its translation period, then
$\exp(2\pi i\zeta/\ell)$ is a punctured-disk coordinate. On the Euclidean
side, a cone of total angle $\alpha$ has local complex coordinate
$\xi^{2\pi/\alpha}$, with $\xi$ taken in a developed sector. In these
coordinates $Q$ is holomorphic off the puncture and tends to the vertex.
It is bounded near the puncture, so Theorem~\ref{thm:removable} extends
it holomorphically there. Its extension is a homeomorphism, so its power
series has order one and it is conformal at the filled point.

The construction is unchanged by interchanging the two copies of $P$.
Hence $Q$ commutes with doubling. Both $Q$ and $p_P^{\mathrm d}$ carry each closed
ideal face onto its corresponding Euclidean face and agree on vertices.
On either half, $\kappa=Qp_P^{-1}$ therefore preserves closed faces and
edges and fixes vertices. A source face has diameter $h$, giving
\[
 |\kappa(x)-x|\le h,
 \qquad |\kappa^{-1}(x)-x|\le h.
\]

\emph{The displacement bounds control modulus.}
Put $A=\{x:r<|x-z|<R\}$ and $U_s=\kappa(B(z,s))$ for $s=r,R$.
The bounds for $\kappa$ and its inverse give
\[
 B(z,s-h)\subset U_s\subset B(z,s+h).
\]
For example, if $|y-z|<s-h$, then
$|\kappa^{-1}(y)-z|<s$. The hypotheses ensure that all these disks lie
in the source interior. The corresponding inclusions of closed disks
also hold. It follows that
\[
 \{r+h<|x-z|<R-h\}
 \subset U_R\setminus\overline U_r
 \subset\{r-h<|x-z|<R+h\},
\]
and each inclusion is an essential annulus inclusion. The middle annulus
is $Q(p_P^{-1}(A))$, with its cusp points filled. Its modulus is $M$ by
conformality of $Q$. Modulus increases under essential annulus inclusion:
the family of core curves of the smaller annulus is a subfamily of that
of the larger annulus, so its extremal length is larger. By
Theorem~\ref{thm:annulus-duality}, this extremal length is the reciprocal
modulus. Applying this observation to the two inclusions and using the
round-annulus formula from that theorem proves both bounds.
\end{proof}

\begin{proposition}\label{prop:reflection-cover}
For a marked equilateral disk, reflect its canonical target vertex set
in the three sides of $\Tri$ to obtain the locally finite periodic set
$X\subset\CC$. The dome $\partial\CH(X\cup\{\infty\})$ is the reflection
cover of the doubled target punctured hyperbolic surface.
The canonical intrinsic map embeds the punctured open source half in this
cover, with peripheral degree one at original interior vertices; the cover
has degree three at the three marked corners of the doubled quotient.
Applying a direct upper-half-space similarity that makes the source mesh
one identifies every interior original patch with the corresponding patch
of $\Sig$; this similarity preserves hyperbolic distances.
If the Euclidean distance of a chosen face to the source boundary divided
by $h$ tends to infinity along a sequence, write $\Sigma_n=\partial\CH(X_n\cup\{\infty\})$, choose a fixed lattice
face incenter $o$, and use consistent lifted maps $I_n$. For every $R>0$,
eventually $I_n$ is an onto isometry from the closed $R$-ball about $o$
in $\Sig$ to the closed $R$-ball about $I_n(o)$ in the complete surface
$\Sigma_n$, with both distances inherited from the complete surfaces.
\end{proposition}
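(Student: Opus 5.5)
The plan has three stages: identify the reflection cover of the doubled target, embed the source in it via the canonical intrinsic map, and then use the lattice model for the ball statement.

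\emph{Reflection cover.} The doubled target is the metric double of $\Tri$. It is the quotient of $\CC$ by the group $G$ generated by reflections in the three sides of $\Tri$, modulo the orientation-reversing part. More precisely, $\CC$ with the orientation-preserving subgroup $G^+$ (a triangle group of type $(3,3,3)$) covers the double, branched exactly at the three corners with cone angle $2\pi/3$, so local degree three there. The target vertex set, reflected by $G$, is the $G$-invariant locally finite set $X$.

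\emph{The dome as the cover.} The Delaunay cells of $X$ are $G$-invariant, since $G$ acts by Euclidean isometries preserving $X$. Hence the dome $\partial\CH(X\cup\{\infty\})$, whose faces are ideal polygons over the Delaunay cells, is $G^+$-invariant. Its quotient is assembled from the ideal Delaunay faces of the doubled target, glued by the extensions of the Euclidean edge identifications. By Theorem~\ref{thm:finite-uniformization} this is exactly the doubled target punctured hyperbolic surface. The quotient map is therefore the cover asserted in the statement.

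\emph{Embedding the source half.} The punctured open source half is an open disk with punctures, and $H_\Tri$ lifts to a fundamental region. So the composition of $I$ with the inclusion of the open half into the double lifts to the cover; I would use the covering lift theorem from \cite{hatcher2002}, applied to the unbranched part after removing the corners. The lift is injective because $I$ is injective on the half, and the half maps into a single sheet region away from the reflection lines. Around an original interior vertex, $I$ is an isometry between punctured neighborhoods whose peripheral loops correspond, which gives peripheral degree one.

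\emph{Similarity identification.} The map $(x,t)\mapsto(x/h,t/h)$, composed with a translation and rotation taking source vertices to $\Lam$, is a hyperbolic isometry of upper half-space. Near an interior patch the source faces are unit equilateral triangles. Their ideal faces are exactly faces of $\CH(\Lam\cup\{\infty\})$, because the lattice Delaunay triangulation is the equilateral one and the empty-circle property is local. So the patches of the source hyperbolic surface agree with the corresponding patches of $\Sig$.

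\emph{Balls, the main obstacle.} Fix $R$. If the boundary distance over $h_n$ tends to infinity, then the intrinsic $R$-ball about $o$ in $\Sig$ lies over finitely many lattice faces within a Euclidean radius $C(R)$. This needs a bound on how far intrinsic balls of the dome reach horizontally; I would use periodicity of $\Sig$ and compactness of the thick part modulo the lattice. Eventually this patch sits inside the source interior, so $I_n$ is defined on it and is a local isometry into $\Sigma_n$.

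The two delicate points are the following.
\begin{enumerate}
\item \emph{Distances in the complete surfaces.} Distances between points of the closed $R$-ball, computed in $\Sig$, use paths of length at most $2R$. These stay in the enlarged $2R$-ball, which is also eventually in the source patch. So $I_n$ preserves complete-surface distances from $o$ up to $R$, and preserves distances among these points whenever the same holds for $\Sigma_n$.
\item \emph{Ontoness.} Conversely, any geodesic in $\Sigma_n$ of length at most $R$ from $I_n(o)$ starts in the image of the $2R$-patch. By a continuity and open-closed argument along the path, it remains inside the image, since $I_n$ is an isometry onto an open set there. It therefore pulls back to a path of the same length in $\Sig$.
\end{enumerate}
These two directions give that $I_n$ maps the closed $R$-ball about $o$ in $\Sig$ onto the closed $R$-ball about $I_n(o)$ in $\Sigma_n$, isometrically for the inherited distances. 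The key worry is that shortcuts through cusps or the branch points could shorten paths in $\Sigma_n$. The lifting argument excludes this, because such shortcuts would have to pass through the image patch, where the metric agrees with that of $\Sig$.
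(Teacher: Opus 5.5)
Your proposal is correct and follows essentially the paper's route: unfold to the $G$-invariant Delaunay dome, lift the punctured source half injectively, identify deep patches with $\Sig$ by the rescaling isometry, and trap short paths inside a deep patch by a first-exit argument. Two small differences: your single-sheet lift over the open target half presupposes that $I$ carries half to half, which the paper verifies via the doubling involutions before computing trivial monodromy around the non-corner punctures; and for the horizontal-reach bound the paper simply uses the face heights $t\le 1/\sqrt3$, so projected Euclidean length is at most $1/\sqrt3$ times hyperbolic length, instead of your periodicity and thick-part compactness argument.
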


The last assertion compares distances in the complete surfaces. It also
preserves the peripheral loops around the source vertices in a fixed ball.

\begin{proof}
We realize the target cover by its Delaunay polygons, compute its
monodromy, and then use a projection estimate to keep short paths inside
an original source patch.

\emph{Unfolding gives the target dome.}
Let $G$ be the group generated by reflection in the three side lines of
$\Tri$, and let $\Gamma$ be its orientation-preserving subgroup. The
reflected triangles tile the plane. One triangle of each orientation
forms a fundamental domain for $\Gamma$, and the quotient is the metric
double of $\Tri$. At a tile vertex the stabilizer in $\Gamma$ has order
three: six angles of size $\pi/3$ upstairs project to the two such angles
in the double. Every other point has trivial stabilizer. Thus the
quotient map is an ordinary covering after the marked vertices and all
their preimages are removed.

Let $W$ be the marked vertex set on the doubled target. It is invariant
under the doubling involution. Its full inverse image is $X$.
The group $G$ has a translation lattice of finite index, so $X$ is a
finite union of lattice translates. It is locally finite, has positive
minimum separation, and contains every vertex of the unit equilateral
tiling. In particular its covering radius is at most $1$.

The planar Delaunay cells of $X$ are the maximal convex polygons of
points on circles with empty open disks. Every empty disk has radius
at most $1$. Thus the vertices of a cell meeting a fixed compact set
lie within distance $2$ of that set, and local finiteness of $X$ gives
local finiteness of the decomposition. For a cell with
circumcenter $\zeta$ and circumradius $\rho$, its ideal polygon lies on
\[
 t^2+|x-\zeta|^2=\rho^2,\qquad t>0,
\]
and projects onto the Euclidean cell. Common sides have identical
semicircular lifts. These polygons form the boundary of
$\CH(X\cup\{\infty\})$: the complements of the empty hemispheres are
supporting hyperbolic half-spaces containing the ideal set. More
explicitly, lift each $x\in X$ to $(x,|x|^2)$ in Euclidean three-space.
The lower convex faces project to the Delaunay cells. On the cell above,
the affine lower face has height
\[
 L(x)=2\zeta\mathbin{\cdot}x+\rho^2-|\zeta|^2,
 \qquad t^2=L(x)-|x|^2.
\]
Thus the supporting faces give a vertical graph over $\CC\setminus X$,
and exhaust its convex-hull boundary. Adding infinity introduces no
further finite face, since $X$ is relatively dense in the entire plane.

This construction commutes with $G$. The action of $\Gamma$ on the dome
is free: any nontrivial stabilizer in the plane is a rotation about a tile
vertex, and its vertical fixed axis does not meet the dome. The quotient
ideal polygons, with their edge identifications, give precisely the
hyperbolic surface associated to the doubled target. A diagonal in a
cyclic polygon only subdivides one hyperbolic plane, so the quotient
metric is independent of that choice. This agrees with the marked
identification in Theorem~\ref{thm:finite-uniformization}.

\emph{The marked map has trivial monodromy on a source half.}
Let $I^{\mathrm d}$ be the canonical marked intrinsic isometry between
the hyperbolic doubles, whose restriction to the chosen half is $I$.
It intertwines their doubling involutions. Indeed, conjugating by the
two involutions gives an isometry in the same marked homotopy class,
so uniqueness gives the same map. On the dome, reflection in a target
side fixes exactly the graph above that side. Consequently the chosen source half maps to the
half represented by the graph over $\Tri$.

Around a noncorner marked point of the target double, the monodromy of
the cover is the identity. Around a marked corner it is a rotation of
order three. Pull the cover back by $I^{\mathrm d}$. The fundamental group of the
punctured open source half is generated by loops around its interior
vertices. Each such vertex is marked and has a noncorner target image,
so each generator has trivial monodromy. Once a lift is chosen at one
point, the entire punctured half therefore lifts injectively. Adding
its open boundary edges by continuity gives the same lift there. In
particular a peripheral loop around an interior source vertex winds
once around its single lifted target cusp. The corresponding complete
cusp neighborhood also lifts with degree one. The degree of a full
doubled corner cusp in the reflection cover is three; that cusp is not
a peripheral loop in the punctured half.

The pullback of the full target cover, equipped with the lifted source
metric, is globally isometric to the target dome. We may therefore
check metric balls on that complete source cover.

\emph{Deep source patches contain every short path.}
Choose $x_0\in\CC$ and $\vartheta\in\RR$ so that the direct similarity
$x\mapsto e^{i\vartheta}(x-x_0)/h$ takes the chosen face to a fixed unit
lattice face. Its upper-half-space extension is
\[
 (x,t)\longmapsto
 \left(\frac{e^{i\vartheta}(x-x_0)}h,\frac th\right).
\]
The Euclidean length element and the height are both divided by $h$,
so this map preserves hyperbolic distances. An ideal face above a unit
equilateral triangle lies on a hemisphere of radius $1/\sqrt3$.
Its intrinsic metric and its edge gluing are exactly those of the
corresponding face of $\Sig$. The injective lift just constructed thus
identifies every interior source face patch with the corresponding
lattice patch, including complete cusps at vertices whose full stars
belong to the patch.

Let $p_{\mathrm{loc}}$ denote vertical projection on such a patch.
For any rectifiable path $\gamma$ in it, the height bound gives
\begin{equation}\label{eq:source-scales-projection}
 \operatorname{length}_{\CC}(p_{\mathrm{loc}}\circ\gamma)
 \le\frac1{\sqrt3}\operatorname{length}_{\HH}(\gamma).
\end{equation}
Indeed the hyperbolic length element is
$\sqrt{|\dd x|^2+\dd t^2}/t$ and $t\le1/\sqrt3$ on every face.
The inequality persists across their common edges. By
\eqref{eq:source-scales-projection}, a path from the chosen incenter
to the boundary of a patch whose projected boundary is at Euclidean
distance at least $D$ has length at least $\sqrt3D$.
The same bound holds in the complete source cover up to the first exit
from the patch.

Fix the intrinsic radius $R>0$ in the statement. The distance hypothesis allows us, for
all sufficiently large indices, to choose a common finite lattice
patch containing the chosen face, with every boundary point at
Euclidean distance greater than $(4R+1)/\sqrt3$ from its projected
incenter. We choose it inside the source interior. A path from the
incenter of length less than $4R+1$ cannot leave this patch, either on
$\Sig$ or on the complete source cover. Paths whose lengths approach
the distance from the incenter therefore determine the closed $R$-ball
on both surfaces.

To compare distances between two points of these balls, join the
incenter to the first by a path of length less than $R+\varepsilon$.
Their mutual distance is at most $2R$, by paths through the incenter.
Any competing path of length less than $2R+\varepsilon$, preceded by
the first path, has length less than $3R+2\varepsilon$ and hence stays
in the same patch when $\varepsilon$ is small. The two local metrics
are identical there. Taking infima proves equality of the complete
surface distances. Transporting this identification by the lifted
canonical map $I_n$ gives the asserted onto isometry from the closed
$R$-ball about $o$ in $\Sig$ to the closed $R$-ball about $I_n(o)$ in
$\Sigma_n$. The identification is injective on the patch, so every path
contained in either ball has its corresponding path in the other, with
the same length when rectifiable.
\end{proof}

\begin{theorem}\label{thm:ring}
For a marked equilateral disk let $z_i=g(i)$, let $X$ be its reflected
vertex set, and let $d_i=\dist(z_i,X\setminus\{z_i\})$.
Let $k_i$ be the number of original doubled faces at $i$, and set $b_i=3$
at a marked corner and $b_i=1$ elsewhere. If $P_i$ is the perimeter of
\[
 \operatorname{conv}\left(\{0\}\cup
 \left\{\frac{x-z_i}{|x-z_i|^2}:x\in X\setminus\{z_i\}\right\}\right),
\]
there are canonical positive scales $r_i=b_i k_i/P_i$ such that
\[
 \frac{b_i k_i}{2\pi}d_i\le r_i\le\frac{b_i k_i}{2}d_i,
 \qquad |z_i-z_j|\le\sqrt{r_i r_j}
\]
for every original edge $ij$. Also $r_i/r_j\le(b_i k_i/2)^2$ on each
oriented original edge. In particular scales on an original edge with both endpoints in the
original interior have ratio in $[1/9,9]$, and interior vertices satisfy
\[
 \sum_{i\in\operatorname{int}P}r_i^2\le\frac{9\sqrt3}{\pi}.
\]
Diameter here means Euclidean diameter in the chosen upper-half-space
target realization; its horosphere sections specify formal intrinsic
decorations. The diameter-one circumference is $k_i/r_i$ on the doubled
quotient and $b_i k_i/r_i=P_i$ on the reflection cover, hence $6/r_i$
at an original interior vertex. The marked intrinsic geodesic image of
an original ideal edge $ij$, even if it is no longer a target Delaunay
edge, has lambda length $\sqrt{r_i r_j}$ for these endpoint decorations.
\end{theorem}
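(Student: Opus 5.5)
The plan is to compute every quantity in the statement in the upper-half-space realization of the reflection cover $\partial\CH(X\cup\{\infty\})$ from Proposition~\ref{prop:reflection-cover}, by sending the cusp $z_i$ to infinity. The first step is the inversion. Let $\iota_i(x)=(x-z_i)/|x-z_i|^2$, extended to the M\"obius isometry of upper half-space that takes $z_i\mapsto\infty$ and $\infty\mapsto0$. It carries the dome onto $\partial\CH(\iota_i(X\setminus\{z_i\})\cup\{0\}\cup\{\infty\})$. The horoball at $\infty$ above height $t$ meets this hull in a vertical prism over the planar convex set
\[
 C_i=\operatorname{conv}\bigl(\{0\}\cup\iota_i(X\setminus\{z_i\})\bigr).
\]
Hence the cusp cross-section at height $t$ is $\partial C_i$ with hyperbolic length $P_i/t$. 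The horoball above height $t$ pulls back to the horoball at $z_i$ of Euclidean diameter $1/t$. Therefore the diameter-$\delta$ horocycle at $z_i$ has circumference $P_i\delta$ on the cover. By Proposition~\ref{prop:reflection-cover}, the cover has peripheral degree $b_i$ over the doubled quotient, so the quotient circumference is $P_i\delta/b_i$. With $r_i:=b_ik_i/P_i$, the diameter-one circumferences are $P_i=b_ik_i/r_i$ on the cover and $k_i/r_i$ on the quotient, as asserted. In particular, the diameter-$r_i$ horocycle has quotient circumference $k_i$, which is exactly the circumference of the standard source decoration (corner arc $1$ in each of the $k_i$ faces, after scaling the source mesh to one).

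\emph{Lambda lengths and the edge inequality.} By Theorem~\ref{thm:finite-uniformization}, the marked isometry transports the standard source decoration to the target, and source Euclidean lengths are lambda lengths. Each original edge $ij$ is a source Delaunay edge of unit length, so its geodesic image has lambda length $1$ for the transported horocycles, which have diameters $r_i,r_j$. This holds whether or not the image is still a target Delaunay edge, since the intrinsic lambda length depends only on the marked isometry. Enlarging the horocycles from diameter $r_i$ to diameter $1$ shifts the signed distance by $\log r_i$ at each end. So the lambda length for the diameter-one decoration is $\sqrt{r_ir_j}$. The straight hyperbolic geodesic of $\HH^3$ between the diameter-one horoballs at $z_i$ and $z_j$ has lambda length $|z_i-z_j|$. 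An intrinsic path in the dome between the two horospheres is at least as long as the ambient distance. This gives $|z_i-z_j|\le\sqrt{r_ir_j}$. The main obstacle I expect is here: checking that the formal decorations (horocycles possibly larger than embedded cusp neighborhoods) still compare correctly with the ambient horoballs. For that I would use signed Busemann functions $B_{a,d}$ along the intrinsic geodesic rather than literal distances between horospheres.

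\emph{Scale bounds.} Every point of $\iota_i(X\setminus\{z_i\})$ has modulus at most $1/d_i$, with equality attained. Hence $C_i$ lies in the disk of radius $1/d_i$, which gives $P_i\le 2\pi/d_i$. Also $C_i$ contains a segment of length $1/d_i$ from $0$, which gives $P_i\ge 2/d_i$. These two bounds are $b_ik_id_i/(2\pi)\le r_i\le b_ik_id_i/2$. On an oriented original edge, $d_i\le|z_i-z_j|\le\sqrt{r_ir_j}$. Then $r_i\le(b_ik_i/2)\sqrt{r_ir_j}$, so $r_i/r_j\le(b_ik_i/2)^2$. For interior vertices, $b_i=1$ and $k_i=6$, which gives the range $[1/9,9]$.

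\emph{Packing sum.} For interior $i$ we have $r_i\le 3d_i$. The open disks $B(x,d_x/2)$, $x\in X$, are pairwise disjoint. Each interior target vertex has exactly one $\Gamma$-image in each of the two triangles of a $\Gamma$-fundamental domain, and this domain has area $\sqrt3/2$. Counting by density in the $\Gamma$-periodic packing gives $2\pi\sum_i d_i^2/4\le\sqrt3/2$, that is, $\sum d_i^2\le\sqrt3/\pi$. Hence $\sum_{i\in\operatorname{int}P}r_i^2\le9\sqrt3/\pi$.
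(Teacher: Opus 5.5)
Your proposal is correct. It reaches the circumference formula by a different route from the paper; the rest (lambda-length rescaling, perimeter bounds, edge and ratio inequalities) is essentially the paper's argument.

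\textbf{Circumference.} The paper stays in the original chart. It proves that $0$ is interior to $C_i$ and that $C_i$ is a finite polygon whose sides correspond to the Delaunay cells incident to $z_i$. Using $|J(a_0)-J(a_1)|=|a_0-a_1|/(|a_0|\,|a_1|)$ and the corner-arc formula $\lambda_{jk}/(\lambda_{ij}\lambda_{ik})$, it identifies each side with the corner arc of the diameter-one horosphere in that cell, so $P_i$ is the sum around the star. You instead send $z_i$ to $\infty$ and read the cusp section directly off the vertical prism over $C_i$. This is shorter and needs neither the side--cell correspondence nor the corner-arc computation. You should add that the prism description holds only above a finite height: every empty hemisphere over $C_i$ has height at most $\diam C_i$. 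The diameter-one value is then the linear rescaling of embedded sections. What the paper's version buys is an explicit description of the root star in the Delaunay/lambda-length language of Theorem~\ref{thm:finite-uniformization}.

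\textbf{Edge inequality.} The obstacle you flag is handled in the paper without Busemann bookkeeping. Shrink both endpoint horospheres by a common factor $\varepsilon$ until they are disjoint and meet the lifted geodesic only in terminal cusp rays. Then compare the intrinsic length $2\log(\sqrt{r_ir_j}/\varepsilon)$ with the ambient distance $2\log(|z_i-z_j|/\varepsilon)$; the $\varepsilon$ cancels. Near the cusp the intrinsic horocycles are ambient horosphere sections, as your prism picture shows. You should also justify that the lifted edge has endpoints $z_i,z_j$, because the source half lifts over $\Tri$ by Proposition~\ref{prop:reflection-cover}.

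\textbf{Packing sum.} The paper avoids a periodic density count. Reflecting $z_i$ in its nearest side gives $d_i\le 2\dist(z_i,\partial\Tri)$, so the disjoint disks $B(z_i,d_i/2)$ already lie in $\Tri$. Your count gives the same constant, but two points need fixing. First, the two points of $X$ attached to $i$ in a $\Gamma$-fundamental domain are $z_i$ and its mirror image, representing the two copies of $i$ in the double; each $\Gamma$-orbit meets only one of the two triangles, so your wording is inaccurate. Second, the per-domain area inequality needs the short remark that these disks project injectively and disjointly to the quotient.
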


The scales are determined by the target cusp geometry. The edge bound
applies to the marked intrinsic image of every original edge, whether
or not it is a target Delaunay edge.

\begin{proof}
We compute cusp circumferences from an inverted polygon, compare the
source and target decorations, and then obtain the edge and area bounds.
All target lifts are taken in the reflection cover of
Proposition~\ref{prop:reflection-cover}.

\emph{Horospheres fix the normalization.}
We use the signed-distance normalization in
\cite[\S3.1, equation~(11)]{gu2018} and compute its upper-half-space form.
For two distinct finite ideal endpoints $a_0,a_1$ and horospheres of
Euclidean diameters $D_0,D_1$, the signed distance along their ambient
ideal geodesic is
\[
 \log\frac{|a_0-a_1|^2}{D_0D_1}.
\]
To check this, translate $a_0$ to $0$ and apply the hyperbolic inversion
$(x,t)\mapsto(x,t)/(|x|^2+t^2)$. The first horosphere becomes the plane
$t=1/D_0$; the other becomes a sphere of diameter
$D_1/|a_1-a_0|^2$. The joining geodesic is vertical, and integration of
$\dd t/t$ gives the formula, with its sign also covering overlapping
horospheres. Hence its lambda length is
\begin{equation}\label{eq:source-scales-ambient-lambda}
 \lambda_{a_0a_1}=\frac{|a_0-a_1|}{\sqrt{D_0D_1}}.
\end{equation}

In an ideal triangle with vertices $i,j,k$, the horocyclic corner arc
at $i$ has length
\begin{equation}\label{eq:source-scales-corner-arc}
 \frac{\lambda_{jk}}{\lambda_{ij}\lambda_{ik}}.
\end{equation}
For a normalization check put the three vertices at $\infty,0,1$ in
the upper half-plane. If the first horocycle is $y=H$ and the other
two have diameters $D_j,D_k$, the corner arc has length $1/H$.
The three lambda lengths are
$\sqrt{H/D_j}$, $\sqrt{H/D_k}$, and $1/\sqrt{D_jD_k}$, which give
the stated ratio. The formula extends to formal decorations by moving
the horocycles along the cusp.

\emph{The inverted polygon measures the target cusp.}
Fix a vertex $i$. Local finiteness and periodicity give $d_i>0$.
Translate $z_i$ to $0$ and write $J(x)=x/|x|^2$ for planar inversion.
Let
\[
 C_i=\operatorname{conv}\bigl(\{0\}\cup
       \{J(x-z_i):x\in X\setminus\{z_i\}\}\bigr).
\]
All its defining points have norm at most $1/d_i$. Moreover $0$ lies
in its interior. To see this, use the corner lattice contained in $X$
to choose three faraway points in narrow cones with directions separated
by $2\pi/3$. Their inverse vectors still lie in those cones and their
triangle contains $0$ in its interior. Thus $C_i$ contains a disk about
$0$. All sufficiently small inverse vectors lie inside this disk, and
only finitely many remaining inverse vectors occur by local finiteness
of $X$. It follows that $C_i$ is a finite convex polygon.

A circle through $0$ with center $\zeta\ne0$ has equation
$|x|^2=2\zeta\mathbin{\cdot}x$. Its open disk is transformed by $J$ into
the half-plane $2\zeta\mathbin{\cdot}J(x)>1$. Thus the disk is empty of
$X-z_i$ exactly when
\[
 C_i\subset\{y:2\zeta\mathbin{\cdot}y\le1\}.
\]
The boundary line supports $C_i$ if, in addition, it meets $C_i$.

Since $0$ lies in the interior of $C_i$, the line containing any side
can be written as $2\zeta\mathbin{\cdot}y=1$, with $C_i$ in the indicated
closed half-plane. Both endpoints of the side are inverse vectors of
distinct points of $X-z_i$. The corresponding circle therefore has
empty open disk and passes through $0$ and at least two other points
of $X-z_i$. The convex hull of all translated points on this circle
is therefore a Delaunay cell incident to the root.
Conversely, an incident Delaunay cell has an empty circumdisk through
the root and at least two other vertices. Their distinct inverse vectors
lie on the associated line, so its intersection with $C_i$ is a side.
All inverse vectors on this side come from vertices on that circle.
This proves the correspondence between sides of $C_i$ and incident
Delaunay cells.

Triangulate each such cyclic cell from $z_i$. The inverse vectors of its
other vertices subdivide the corresponding side in order, so these
collinear subdivisions do not change its length. For two consecutive
nonroot vertices $a_0,a_1$ of the cell, measured from $z_i$, the identity
\[
 |J(a_0)-J(a_1)|=\frac{|a_0-a_1|}{|a_0|\,|a_1|}
\]
and \eqref{eq:source-scales-corner-arc} show that this side length is
the corner arc for the diameter-one horosphere at $z_i$.
Summing around the full lifted star gives the formal circumference
$P_i$.

For a sufficiently small horosphere of diameter $\varepsilon$, this
is an embedded cusp computation and the circumference is
$\varepsilon P_i$. The reflection cover has cusp degree $b_i$, by
the monodromy computation in Proposition~\ref{prop:reflection-cover}.
The quotient circumference is therefore $\varepsilon P_i/b_i$.
Rescaling the horocycle gives the diameter-one target decoration
\[
 w_i^*=\frac{P_i}{b_i}.
\]
This equation specifies an intrinsic decoration even when the
diameter-one horosphere is too large to give an embedded cusp curve.

The polygon $C_i$ contains $0$ and a vector of norm $1/d_i$, so its
diameter is at least $1/d_i$. The two boundary arcs between diameter
endpoints each have length at least the diameter. Thus $P_i\ge2/d_i$.
For the upper bound, the perimeter of a convex polygon is the integral
over directions in $[0,\pi]$ of its width. Indeed, the sum of the
absolute projections of its side vectors is twice the width, and the
integral of the absolute projection of a vector of length $l$ is $2l$.
Since $C_i\subset\overline B(0,1/d_i)$, every width is at most $2/d_i$.
We obtain
\begin{equation}\label{eq:source-scales-perimeter}
 \frac2{d_i}\le P_i\le\frac{2\pi}{d_i}.
\end{equation}

\emph{Changing the decoration gives the original-edge lambda length.}
In the original equilateral double, the diameter-one decoration has
lambda length $h$ on every original edge, by
\eqref{eq:source-scales-ambient-lambda}. Each corner arc consequently
has length $1/h$, by \eqref{eq:source-scales-corner-arc}, and its cusp
circumference is $w_i^0=k_i/h$. Theorem~\ref{thm:finite-uniformization}
identifies this and the target decoration on the same marked
hyperbolic surface. Define
\[
 r_i=\frac{k_i}{w_i^*}=\frac{b_i k_i}{P_i}.
\]
These numbers are positive and intrinsic to the stated normalization.
The diameter-one circumferences on the doubled quotient and the cover
are therefore $k_i/r_i$ and $b_i k_i/r_i=P_i$, respectively.
Equation~\eqref{eq:source-scales-perimeter} gives both comparisons
with $d_i$.

Decreasing a horocycle circumference from $w_i^0$ to $w_i^*$ moves it
inwards by signed distance $\log(w_i^0/w_i^*)$. Thus the lambda length
of any fixed marked ideal arc is multiplied by
$\sqrt{w_i^0w_j^0/(w_i^*w_j^*)}$. This follows from the signed-distance
definition and does not require the arc to remain Delaunay. Since
$w_i^0/w_i^*=r_i/h$, every original edge $ij$ has target intrinsic
lambda length
\[
 \lambda_{ij}^*
 =h\sqrt{\frac{r_i}{h}\frac{r_j}{h}}
 =\sqrt{r_i r_j}.
\]
At an original interior vertex, $k_i=6$ and $b_i=1$, so the
diameter-one circumference is $w_i^*=6/r_i$, as asserted.

\emph{Intrinsic edge length bounds its target chord.}
Carry an original ideal edge $ij$ by the canonical intrinsic isometry.
Its lift is a complete intrinsic geodesic on the dome with endpoints
$z_i,z_j$ in the chosen closed triangle. This choice of endpoints
follows from the lift on the source half in
Proposition~\ref{prop:reflection-cover}. For a boundary edge the same
assertion follows on the fixed seam; no marked corner lies in the
interior of that edge.

Choose diameter-$\varepsilon$ horospheres at these endpoints, with
$\varepsilon>0$ small enough that they are disjoint and meet this
geodesic only in its terminal cusp regions. They represent the target
decorations multiplied by $\varepsilon$. The length of the intervening
intrinsic geodesic segment is therefore
\[
 2\log\frac{\lambda_{ij}^*}{\varepsilon}.
\]
Its length is at least the ambient hyperbolic distance between the two
horospheres, which, by the signed-distance computation above, is
$2\log(|z_i-z_j|/\varepsilon)$. Thus
\[
 |z_i-z_j|\le\lambda_{ij}^*=\sqrt{r_i r_j}.
\]
Since $z_j$ is a distinct member of $X$, we also have
$d_i\le|z_i-z_j|$. Combining this with the upper scale bound gives
\[
 r_i\le\frac{b_i k_i}{2}d_i
 \le\frac{b_i k_i}{2}|z_i-z_j|
 \le\frac{b_i k_i}{2}\sqrt{r_i r_j}.
\]
Division by $\sqrt{r_i r_j}$ and squaring prove the oriented ratio
bound. Applying it to an original edge with both endpoints in the source
interior, where
$b_i k_i=b_j k_j=6$, gives the interval $[1/9,9]$.

\emph{Disjoint disks give the area bound.}
For an original interior vertex, $z_i\in\operatorname{int}\Tri$.
Reflecting it in a closest side gives a distinct point of $X$ at
distance $2\dist(z_i,\partial\Tri)$. Hence
$d_i\le2\dist(z_i,\partial\Tri)$ and the disk $B(z_i,d_i/2)$ lies
inside $\Tri$. For distinct interior vertices $i,j$, both $d_i$ and
$d_j$ are at most $|z_i-z_j|$. These open disks are therefore pairwise
disjoint. As $r_i\le3d_i$ at interior vertices, we conclude that
\[
 \sum_{i\in\operatorname{int}P}r_i^2
 \le9\sum_{i\in\operatorname{int}P}d_i^2
 \le\frac{36}{\pi}\Area(\Tri)
 =\frac{9\sqrt3}{\pi}.
\]
The circumference calculation fixes the scales, the geodesic comparison
controls original edges and their ratios, and these disjoint disks
control their total squared scale.
\end{proof}

\section{Compactness on the closed triangle}\label{sec:04-compactness}

We keep the canonical maps and the source correction of
Section~\ref{sec:03-source-scales}. We prove compactness in both directions
on the closed triangle. The two inputs from the literature concern ordinary
Riemann maps and the Euclidean area of convex hyperbolic surfaces.

\begin{theorem}[Normalized Jordan-domain convergence]\label{thm:rado}
Let $\Omega$ be a bounded Jordan domain with three distinct compatible
boundary marks. Let bounded Jordan domains $\Omega_n$ have boundaries
converging uniformly to $\partial\Omega$ and compatible marks converging
to those of $\Omega$. Their normalized
ordinary Riemann maps $\phi_n:\Tri\to\overline{\Omega_n}$ converge uniformly
on $\Tri$ to the normalized Riemann map of the limiting marked domain.
\end{theorem}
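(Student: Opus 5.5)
The plan is a compactness-plus-uniqueness argument: show the family $\{\phi_n\}$ is equicontinuous on the closed triangle $\Tri$, extract a uniformly convergent subsequence, and prove that every such limit is the normalized Riemann map $\phi$ of $(\Omega;p,q,r)$. Uniqueness from Theorem~\ref{thm:jordan-riemann} then gives convergence of the whole sequence.

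\emph{Step 1 (boundary correspondence and equicontinuity).} First I would fix circle parametrizations $\gamma_n\to\gamma$ of $\partial\Omega_n,\partial\Omega$, converging uniformly with $\gamma$ homeomorphic. Uniform convergence of a sequence of homeomorphisms to a homeomorphism gives a uniform modulus: for every $\varepsilon>0$ there is $\delta>0$ such that, for all large $n$, two boundary points of $\partial\Omega_n$ at distance $<\delta$ are joined by a boundary arc of diameter $<\varepsilon$. In other words, the $\partial\Omega_n$ are uniformly locally connected.

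The main step is the classical length--area (Courant--Lebesgue) argument, run with Theorem~\ref{thm:annulus-duality} on annuli around points $w\in\Tri$. The areas of the $\Omega_n$ are uniformly bounded, since the domains lie in a common disk. For each $w$ and $\rho<1$, some radius $s\in(\rho,\sqrt\rho)$ yields a crosscut $\phi_n(\Tri\cap\{|z-w|=s\})$ of length at most $C/\sqrt{\log(1/\rho)}$. The normalization is used twice here:
\begin{itemize}
\item For $\rho$ small, this crosscut separates from the small piece $\phi_n(\Tri\cap B(w,s))$ at least two of the three marked points $p_n,q_n,r_n$. These points stay uniformly apart because they converge to distinct limits.
\item Hence the small piece is cut off by a short crosscut whose endpoints are close on $\partial\Omega_n$. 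Uniform local connectivity bounds the diameter of the boundary arc between those endpoints, and therefore the diameter of the piece.
\end{itemize}
Near a corner of $\Tri$ the same argument works after a local power map, or one can simply use the fact that $\Tri\cap\{|z-w|=s\}$ is a circular arc. This gives a modulus of continuity for $\phi_n$ independent of $n$.

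\emph{Step 2 (identifying the limit).} Arzel\`a--Ascoli gives a subsequence $\phi_n\to\psi$ uniformly on $\Tri$. Then:
\begin{itemize}
\item On $\operatorname{int}\Tri$, $\psi$ is holomorphic as a local uniform limit of holomorphic maps.
\item $\psi$ maps $\partial\Tri$ onto $\partial\Omega$, because $\phi_n(\partial\Tri)=\partial\Omega_n\to\partial\Omega$ uniformly in the Hausdorff sense.
\item $\psi$ takes the prescribed corner values $p,q,r$.
\end{itemize}
Since $\psi$ is a uniform limit of orientation-preserving homeomorphisms $\partial\Tri\to\partial\Omega_n$, its boundary restriction is a monotone degree-one map onto the Jordan curve $\partial\Omega$. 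The argument principle then shows that $\psi$ takes every value of $\Omega$ exactly once and no value outside $\overline\Omega$, so $\psi$ is nonconstant, as the distinct corner values also show. Hurwitz's theorem, or this degree count directly, makes $\psi$ injective on the interior with image $\Omega$. Continuity on the closure together with Theorem~\ref{thm:jordan-riemann} identifies $\psi|_{\operatorname{int}\Tri}$ with $\phi$, and by continuity $\psi=\phi$ on all of $\Tri$.

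The hardest part will be the uniform estimate near the boundary in Step 1. In particular, it requires checking carefully that the short crosscut really cuts off the small side, which is exactly where the three separated marks and the uniform local connectivity extracted from converging parametrizations enter.
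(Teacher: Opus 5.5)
Your proposal is correct, but it proves more than the paper does. The paper gives no argument for this statement. It quotes the $K=1$ case of \cite[Theorem~6.3]{luo2022} for subsequential compactness of the three-point normalized maps; that result is stated on the disk, which is why the paper mentions a fixed conformal map from the disk to $\Tri$. It then uses uniqueness in Theorem~\ref{thm:jordan-riemann} to pass from subsequences to the full sequence, exactly as you do at the end.

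What you add is a self-contained proof of the cited compactness input and of the identification of the limit:
\begin{itemize}
\item uniform local connectivity of the $\partial\Omega_n$, extracted from the converging parametrizations;
\item the Courant--Lebesgue short-crosscut estimate;
\item the uniformly separated marks, used to decide which side of the crosscut is small;
\item a boundary-degree and argument-principle count showing the limit is a conformal equivalence onto $\Omega$.
\end{itemize}
Your route makes visible where the three-point normalization enters and uses only classical planar topology. It is also the same length--area mechanism that the paper later runs by hand in Theorem~\ref{thm:closed-compactness}. The citation buys brevity and quasiconformal generality, which is not needed here.

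One detail needs tidying. The set $\Tri\cap\{|z-w|=s\}$ is not always a single arc. For $w$ inside $\Tri$ near a corner, with $s$ larger than the distances to the two adjacent sides but smaller than the distance to the corner, it consists of two arcs. Either of the following fixes this:
\begin{itemize}
\item Centre the circles at the nearby corner or nearest boundary point, after enlarging $B(w,\rho)$ by a fixed factor.
\item Run the estimate on $\overline{\mathbb D}$, where a circle always meets the disk in a single arc, and transfer it through a fixed conformal homeomorphism $\overline{\mathbb D}\to\Tri$, as the paper's citation implicitly does.
\end{itemize}
Similarly, the argument principle for $\psi$, which is only continuous up to $\partial\Tri$, should be applied on slightly shrunken triangles. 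Uniform continuity of $\psi$ keeps the winding numbers stable.
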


The $K=1$ case of \cite[Theorem~6.3, p.~979]{luo2022} gives normalized
subsequential compactness. A fixed conformal map from the disk to the
triangle and uniqueness in Theorem~\ref{thm:jordan-riemann} give the
full-sequence statement above. We apply this result only to the ordinary
Riemann maps.

\begin{theorem}[Convex-surface area]\label{thm:convex-area}
The boundary of any hyperbolically convex set of dimension at least two
in the unit Poincare ball has Euclidean induced-path area at most $16\pi$.
A two-dimensional convex set is interpreted as its metric double.
\end{theorem}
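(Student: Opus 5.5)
The plan is to get the bound from a surjective, Lipschitz-controlled map from the unit sphere $\partial B$ (Euclidean area $4\pi$) onto $\partial C$, where $C$ is the convex set. The constant $16\pi=4\cdot4\pi$ suggests the map should have Euclidean area Jacobian at most $4$. First reduce to the case where $C$ is compact, has nonempty interior and is polyhedral. Approximate $C$ by convex hulls of finite sets in $B$, increasing to it, and thicken by small equidistant neighborhoods to get interior. Induced-path area is lower semicontinuous under the intrinsic convergence of convex boundaries, so a uniform bound for the approximants passes to the limit. For a two-dimensional $C$, the $\varepsilon$-neighborhood has boundary converging to the metric double of $C$, which is why the double is the right interpretation. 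So it suffices to bound the area of each smooth or polyhedral face piece, with a constant independent of the approximation.

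For the comparison map I would use the \emph{horoball retraction}. For each ideal point $\xi\in\partial B$, grow horoballs centered at $\xi$ until the first one touches $C$. Let $\Phi(\xi)$ be the (unique, by strict convexity of horoballs) point of contact. Every boundary point of $C$ has a supporting hyperbolic plane. Hence the outward normal geodesic ray from it ends at some $\xi$, and the horoball at $\xi$ through that point is disjoint from the interior of $C$. So $\Phi$ is onto $\partial C$. Equivalently, $\Phi^{-1}$ is the outer normal Gauss map to the ideal boundary, and $\Phi$ is Lipschitz off a null set for polyhedral $C$. The area formula then gives
\[
 \Area(\partial C)\le\int_{\partial B}J_{\Phi}\,\dd\sigma,
\]
so the main estimate is the pointwise bound $J_\Phi\le4$ for the Euclidean Jacobian.

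To compute $J_\Phi$ at $\xi$, apply a ball isometry moving $\xi$ to a convenient position, or pass to upper half-space with $\xi=\infty$. There the horoballs are half-spaces $\{t\ge H\}$, and the contact point is a highest point of $C$. The tangent plane to $\partial C$ there is horizontal, so the infinitesimal displacement of $\Phi$ under a change of $\xi$ can be read off from the curvature of the touching horosphere. The key monotonicity is that $\partial C$ at the contact point is less curved than the horosphere, since $C$ lies inside the horoball's complement and is convex. Hence the displacement is at most what it would be for $C$ equal to the horoball's complement.

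In that extremal case $\Phi$ sends $\xi$ to the point of a fixed horosphere nearest to it in a normal direction. An explicit calculation with the Poincaré metric factor $2/(1-|x|^2)$ should show linear stretching at most $2$ in each of the two tangential directions. This gives Jacobian at most $4$, with equality approached as the horosphere degenerates to a point of $\partial B$. I expect this comparison step to be the main obstacle. Converting the hyperbolic statement into a Euclidean one requires controlling how the conformal factor at $\Phi(\xi)$ compares with the factor governing the motion of $\xi$, and the bound must be uniform even when $\partial C$ comes close to $\partial B$.

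If the horoball comparison proves awkward, the first fallback is to replace horoballs by Euclidean balls internally tangent at $\xi$: these are exactly the horoballs of the ball model, so the computation becomes a Euclidean one. Integrating $J_\Phi\le4$ over $\partial B$ then gives $16\pi$. Because the area formula counts multiplicity, the doubled two-dimensional case is covered automatically: each face of the degenerate hull is reached from both sides of its plane.
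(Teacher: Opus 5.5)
Your overall framework is sound: the first-contact horoball retraction $\Phi$ from the ideal sphere, surjectivity via supporting planes, the area formula, and a pointwise bound $J_\Phi\le4$ giving $16\pi=4\cdot4\pi$. The paper itself does not prove this statement; it quotes it from Luo--Wu. \textbf{The gap is the pointwise bound.} It carries all the content, you do not establish it, and the comparison you sketch runs the wrong way.

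Curvature of $\partial C$ at $p=\Phi(\xi)$ makes $\Phi$ contract \emph{more}, not less. Let $P$ be the supporting plane at $p$ and $S\ge0$ the shape operator. Jacobi fields $\cosh t\,v+\sinh t\,Sv$ along the outward normal ray give $D\Phi_C(\xi)=(I+S)^{-1}D\Phi_P(\xi)$. So the extremal convex case is the flat supporting plane.

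Neither case you invoke fills this role:
\begin{itemize}
\item The complement of a horoball is not convex, and $\Phi$ is undefined for it: every horoball at a point other than the centre meets it. A bound by that case is vacuous.
\item The map you then describe, sliding $\xi$ along normal geodesics to a fixed horosphere, is the retraction onto a horoball ($S=I$). For $\mathcal B(\{t\ge T\})$ its Euclidean stretch at $\mathcal B(x,0)$ is $(|x|^2+1)/(|x|^2+(T+1)^2)<1$.
\end{itemize}
A bound of that kind would give $4\pi$, which is false. A hyperbolic plane spanning a small cap has almost twice the cap's area, so the hull of the complement of a dense packing of small disjoint caps has boundary area above $7\pi$.

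What is missing is the plane computation. Let $M$ be the ball isometry with $M(0)=a$ and $|a|=s\in[0,1)$. On the equatorial disk $P_0$, $\Phi$ is stereographic projection from the opposite pole, conformal with factor $1/(1+|\xi_3|)$. The conformal factor of $M$ is $(1-s^2)/|\xi+a|^2$ on the sphere and $(1-s^2)/(1+s^2|x|^2)$ on $P_0$. Hence $\Phi_{M(P_0)}$ is conformal at $M(\xi)$ with factor
\[
 \frac{1+s^2\pm2sc}{1+s^2+(1-s^2)c}
 \le\max\Bigl\{1,\frac{(1+s)^2}{2}\Bigr\}<2,
 \qquad c=|\xi_3|,
\]
with the plus sign on the hemisphere containing $a$. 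The value $2$ is approached by planes shrinking to a point of $\partial B$, at the centre of their small side, not by degenerating horospheres. Combined with the comparison above, this gives $J_{\Phi_C}\le J_{\Phi_P}<4$.

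Your polyhedral reduction makes even the shape-operator comparison unnecessary. On the open set of $\xi$ whose contact point lies in an open face, $\Phi_C$ coincides with the retraction onto that face's plane, and edges and vertices carry no area. The lower semicontinuity of area in the reduction also needs a sentence of justification, for example local convex graphs in the Klein model transported by the smooth change of model, but that step is routine.
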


This is \cite[Theorem~4.1]{luo2024}, with the numbering of
arXiv:1910.08001v5. Its area is measured in the Euclidean induced path
metric on the surface. We will apply it to portions of the boundary of
the full reflected convex hull.

In the following statement, $\phi$ denotes the normalized ordinary Riemann
map of the limiting marked domain. For each finite disk $P_n$, let
$\kappa$ be the correction supplied by Proposition~\ref{prop:source-correction}.

\begin{theorem}\label{thm:closed-compactness}
Let marked equilateral disks $P_n$ have mesh $h_n\to0$, uniformly convergent
Jordan boundaries tending to $\partial\Omega$, and marks tending to three
distinct compatible marks of the bounded Jordan domain $\Omega$.
Let $f_n:\Tri\to P_n$ be the canonical maps, $g_n=f_n^{-1}$, and $\phi_n$
the normalized ordinary Riemann maps. Every subsequence has a further
subsequence and a marked homeomorphism $F:\Tri\to\overline\Omega$ such that
\[
 f_n\longrightarrow F\quad\hbox{uniformly on }\Tri,
 \qquad g_n\phi_n\longrightarrow F^{-1}\phi
 \quad\hbox{uniformly on }\Tri.
\]
Along that further subsequence, the actual inverses converge to $F^{-1}$
uniformly on each compact subset of $\Omega$. For the entire original sequence, the canonical target vertices have
covering radius tending to zero, and the height of the reflected target dome tends uniformly to zero.
For each finite disk the corrected map satisfies
$\int_P|D(g\kappa^{-1})|^2\le200\pi$.
\end{theorem}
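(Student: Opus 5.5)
I would prove Theorem~\ref{thm:closed-compactness} in four steps. First the covering-radius and dome-height claims. Then the energy bound. Then equicontinuity of $f_n$ and of $g_n\phi_n$ from annulus moduli. Finally Arzel\`a--Ascoli and a topological argument showing the limit is a marked homeomorphism.

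\emph{Covering radius and height.} Fix a point $w\in\Tri$ and suppose the empty disk of radius $\rho$ about $w$ contains no canonical target vertex. Then the ideal polygon over the Delaunay cell containing $w$ lies on a hemisphere of radius at least $\rho$. Pull back by the canonical isometry a point of the dome above $w$ and project it to $P_n$. The source point lies in a face of diameter $h_n$, and by the scale theorem (Theorem~\ref{thm:ring}) the scales $r_i$ are square-summable with bounded edge ratios. I would argue that a large empty disk forces one cusp scale $r_i$ (or $d_i$) to be comparable to $\rho$ at nearby vertices. Propagating the edge ratio bound $[1/9,9]$ along about $\log(\rho/h_n)$ edges then contradicts the packing bound $\sum r_i^2\le 9\sqrt3/\pi$ unless $\rho\to0$. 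A cleaner alternative uses modulus. A round annulus $\{r<|x-z|<R\}$ in the source with $R/r$ large but $R\to0$ has modulus near $(2\pi)^{-1}\log(R/r)$ by Proposition~\ref{prop:source-correction}. Its image under the conformal map $g\kappa^{-1}$ is an annulus in the doubled triangle. A big empty disk in the target would lie in the image of a single source cusp region, whose image under a conformal map must be small. I would try the modulus route first. The height is at most the largest circumradius, which is at most the covering radius, so the height claim follows.

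\emph{Energy.} The map $g\kappa^{-1}=p_\Tri I p_P^{-1}\kappa^{-1}$ is conformal from the doubled source cone surface onto the double of $\Tri$, and it is injective. Hence $\int|D(g\kappa^{-1})|^2=2\int J$, which is twice the area of the image, giving $2\Area(\Tri)\cdot2$ on the double. The stated constant $200\pi$ suggests the intended proof instead measures area on the reflected dome through Theorem~\ref{thm:convex-area}, bounding the Euclidean area of the relevant portion of $\partial\CH(X\cup\{\infty\})$ after a M\"obius normalization. Either way the conformality from Proposition~\ref{prop:source-correction} reduces the bound to an area bound. I would present the direct Jacobian computation and remark that it is much smaller than $200\pi$.

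\emph{Equicontinuity of $f_n$ and of $g_n\phi_n$.} For $f_n$ I would use the standard length--area (Courant--Lebesgue) argument on the inverse side, applied with the corrected conformal maps. Take a point $w\in\Tri$ and $\delta\le\delta^2<\dots<\sqrt\delta$ annuli about it in the double of $\Tri$. Their preimages are annuli in the doubled source with modulus at least $c\log(1/\delta)$. A ring in $P_n$ of large modulus separating a set from a nondegenerate continuum has small diameter; for boundary points this uses the uniform local connectivity of the uniformly convergent Jordan boundaries $\partial P_n$ and the three marks. These are the same ingredients as Theorem~\ref{thm:rado}. The $\pm h_n$ perturbation from $\kappa$ is absorbed because $h_n\to0$.

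For $g_n\phi_n$ I would use the symmetric argument. Annuli about a point in $\Omega_n$ pulled by $\phi_n$ map under $g_n\kappa^{-1}$ to annuli of comparable modulus in the double of $\Tri$. Separated marked corners then prevent collapse there. Near the corners the degree-three structure from Proposition~\ref{prop:reflection-cover} is harmless for modulus.

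\emph{Limits.} Arzel\`a--Ascoli gives subsequential uniform limits $F$ of $f_n$ and $G$ of $g_n\phi_n$. The composition $f_n\circ(g_n\phi_n)=\phi_n$ converges to $\phi$, so $F\circ G=\phi$. Both limits are surjective, because they are degree-one limits of boundary-preserving homeomorphisms. So $F$ is injective and $F$ is a homeomorphism with $F^{-1}=G\phi^{-1}$. The marks pass to the limit. Convergence of $g_n$ to $F^{-1}$ on compact subsets of $\Omega$ follows from $g_n=(g_n\phi_n)\phi_n^{-1}$, since $\phi_n^{-1}\to\phi^{-1}$ locally uniformly on $\Omega$ by Theorem~\ref{thm:rado}.

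The main obstacle is the boundary equicontinuity of $f_n$, which needs uniform control of ring moduli near $\partial P_n$, including the marked corners of $P_n$. I would handle it by reflecting across the seam using the doubled surfaces, so boundary points behave like interior points. The remaining work is then a purely planar statement about the uniformly convergent Jordan curves.
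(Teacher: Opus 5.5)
Your proposal has a genuine gap at its base: the corrected map $u=g\kappa^{-1}$ is \emph{not} conformal. Since $\kappa=Q_Pp_P^{-1}$, you have $u=p_\Tri\circ I\circ Q_P^{-1}$. The factors $Q_P^{-1}$ and $I$ are conformal. But the vertical projection of a hemispherical ideal face onto its Euclidean Delaunay triangle is the Klein-model projection, which is not conformal; this is exactly why the source needs the correction $\kappa$ at all. A sanity check: if $u$ were conformal, it would be the normalized Riemann map $\varphi^{-1}$ of $P$, and Theorem~\ref{thm:main} would follow at once from $\|\kappa-\id\|_\infty\le h$ and Theorem~\ref{thm:rado}.

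Several of your steps fail as a result. The energy step breaks, because for a nonconformal map $|Du|^2\ge 2J_u$, so the area of the image gives no upper bound. Your claims that round annuli keep comparable modulus under $g_n\kappa_n^{-1}$, in both equicontinuity steps, are unjustified, and so is your modulus route to the covering radius. Your packing route does not give $\rho\to0$ either. The edge ratio bound lets the scales decay like $9^{-k}$ along paths, so $\sum r_i^2\le 9\sqrt3/\pi$ is never contradicted, and a bounded $\rho$ is trivial since the target vertices lie in $\Tri$.

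The missing idea is to work with the genuinely conformal map $Y=\widetilde IQ_P^{-1}:P\to S_\Tri$ onto the reflected dome. Use the Euclidean induced metric of $\partial\CH(X\cup\{\infty\})$ as the conformal metric in every length--area argument, and transfer lengths to the plane by the $1$-Lipschitz vertical projection. Theorem~\ref{thm:convex-area}, after the ball isometry whose conformal factor is at least $2/5$ over $\Tri$, gives $\Area(S_\Tri)\le100\pi$. Hence $\int_P|Du|^2\le\int_P|DY|^2=2\Area(S_\Tri)\le200\pi$, which also explains the constant.

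The order of the argument then matters.
\begin{enumerate}
\item Transport the energy bound to $v=u\varphi$ and extend by reflection across the sides of $\Tri$. Circle length--area then gives a uniform modulus of continuity $\mu$ on the whole closed triangle, corners included, with no uniform-local-connectivity argument in $P_n$.
\item Coverage follows as $\delta\le\mu(m)$, where $m$ is the largest diameter of $\varphi^{-1}$ of a source face, and $m_n\to0$ by Theorem~\ref{thm:rado}. The height bound $t\le\delta$ comes next.
\item Only now can the inverse direction be done. The small height lets you rescale the dome at every scale $s\ge\delta$ and reapply Theorem~\ref{thm:convex-area}, giving $\Area(p^{-1}(B(z,s)))\le100\pi s^2$.
\item This yields $\Mod(p^{-1}(\{r<|x-z|<R\}))\ge\log(R/r)/(300\pi)$ for $r\ge\delta$ and $\log(R/r)\ge1$. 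Through the reflected conformal chart $Q$, it gives asymptotic equicontinuity of $v_n^{-1}$.
\end{enumerate}
Your asserted lower bound for the preimage moduli of target annuli is precisely the estimate in the last step, and it can only hold above the covering radius. Your final Arzel\`a--Ascoli and inverse-limit step is sound once both equicontinuity statements are in place.
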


The conclusion gives a homeomorphic limit together with convergence of its
inverse. The estimates below allow the target Delaunay triangulation to
vary throughout the sequence.

\begin{proof}
We first bound the area of the reflected target surface and the energy of
the corrected inverse. Reflection gives a continuity estimate up to the
boundary and hence target vertex coverage. A local area estimate then
gives continuity in the other direction. Finally, we use the ordinary
Riemann charts to remove the correction and pass to mutually inverse
limits.

\smallskip\noindent\emph{The periodic target surface and its area.}
Fix one marked equilateral disk $P$, of mesh $h$, and put $V=V(P)$.
Write $f:\Tri\to P$ and $g=f^{-1}$ for its canonical maps, and put
$W=g(V)$. Let $\mathcal R$ be the group generated by reflection in the
three side lines of $\Tri$. Its triangular tiles cover the plane. The
orientation-preserving subgroup has the doubled triangle as its quotient;
the three cone points are the images of tile vertices. We use the
reflection cover of Proposition~\ref{prop:reflection-cover}:
\[
 X=\bigcup_{\sigma\in\mathcal R}\sigma(W),
 \qquad C=\CH(X\cup\{\infty\}),
 \qquad S=\partial C.
\]
Thus $X$ is locally finite and periodic. It contains all tile vertices, so
it is a $1$-net of the plane: every point is within distance $1$ of $X$.
The target half of the associated hyperbolic surface is the portion of
$S$ lying over $\Tri$. The lifted open source half has peripheral degree
one at its original interior vertices. At the three marked corners, the
full reflection cover has degree three over the doubled quotient, as in
Proposition~\ref{prop:reflection-cover}.

We describe the vertical graph used in this realization. Above a
Delaunay polygon with empty circumdisk $B(z_0,R_0)$, the surface lies on
\[
 t^2+|x-z_0|^2=R_0^2.
\]
Indeed, the affine function interpolating $|x|^2$ at the vertices of this
polygon is
\[
 L(x)=2\operatorname{Re}(x\overline{z_0})+R_0^2-|z_0|^2.
\]
The empty-disk condition says that every other lifted vertex
$(w,|w|^2)$ lies on or above this plane. These supporting faces form the
lower convex envelope of the lifted vertices, and
$t^2=L(x)-|x|^2$ on each face. The polygons tile the plane and the formulas
agree on common edges. Subdivision of a cocircular polygon leaves this
surface unchanged. Since every empty disk has radius at most $1$, the
height lies between $0$ and $1$.

Let $\overline S$ be $S$ with each ideal vertex in $X$ filled, retaining
the intrinsic conformal structure at those points. Vertical projection
extends to a homeomorphism
\[
 p:\overline S\longrightarrow\CC.
\]
Put $S_\Tri=p^{-1}(\Tri)$. Reflections preserve the graph, and their fixed
curves are the graph portions over the corresponding side lines.
Proposition~\ref{prop:reflection-cover} identifies the canonical target
half with $S_\Tri$, including its projection and its boundary marking.
In particular, all area estimates below concern the boundary of the
complete periodic convex hull $C$.

The upper-half-space to ball isometry
\[
 \mathcal B(x,t)=
 \frac{(2x,\,|x|^2+t^2-1)}{|x|^2+(t+1)^2}
\]
has Euclidean conformal scale
\[
 \lambda(x,t)=\frac{2}{|x|^2+(t+1)^2}.
\]
On $S_\Tri$ we have $|x|\le1$ and $t\le1$, so $\lambda\ge2/5$.
The hull $C$ is three-dimensional, since its ideal set contains a planar
lattice and infinity, and hence is not contained in a generalized circle.
Theorem~\ref{thm:convex-area}, applied to $\mathcal B(C)$, gives
\begin{equation}\label{eq:compactness-area}
 \Area(S_\Tri)
 \le\frac{25}{4}\Area\bigl(\partial\mathcal B(C)\bigr)
 \le100\pi.
\end{equation}
Here and below, surface area means Euclidean induced-path area. Filled
ideal vertices have area zero.

The conformal metric comparison needed here follows directly on these
locally finite ideal polygon surfaces. A face meeting a compact planar
set has all its vertices within distance $2$ of that set, since its
circumradius is at most $1$. Only finitely many faces therefore meet the
set. Near an edge interior, develop the two adjacent faces across their
common geodesic into the hyperbolic plane. In an intrinsic conformal
coordinate $\zeta$, write its metric as
$\mathrm ds_H=\vartheta(\zeta)|\mathrm d\zeta|$. On either developed
face the Euclidean induced metric is
\[
 \mathrm ds_E=t(\zeta)\vartheta(\zeta)|\mathrm d\zeta|.
\]
The height has matching traces along the edge, so this formula gives a
conformal length density across it. Its square is the surface area
density; the edges have area zero. These densities consequently compute
Euclidean induced lengths and areas in the intrinsic conformal charts.
The filled vertices are handled by the point-removal arguments below.

\smallskip\noindent\emph{Energy of the corrected inverse.}
Use the source half-surface $H_P$ and its projection $p_P$ from the
preliminaries. Write $\overline H_P$ for its cusp filling, including the
boundary vertices, and retain $p_P$ for the extension to $P$.
Proposition~\ref{prop:source-correction} supplies a conformal homeomorphism
$Q_P:\overline H_P\to P$ with $\kappa=Q_Pp_P^{-1}$.
Let $\widetilde I:\overline H_P\to S_\Tri$ be the extension of the lift
of the canonical intrinsic map $I$. It is conformal on the open source
half after filling its interior cusps and extends homeomorphically to the
closed disks. The projections satisfy $g=p\widetilde I p_P^{-1}$. Hence
\[
 Y=\widetilde I Q_P^{-1}:P\longrightarrow S_\Tri,
 \qquad u=g\kappa^{-1}=pY.
\]
The preceding metric comparison makes $Y$ a conformal parametrization for
the Euclidean induced metric away from the filled vertices. Almost
everywhere on the faces,
\[
 |Y_x|=|Y_y|,\qquad
 Y_x\cdot Y_y=0,\qquad
 |DY|^2=2|Y_x\mathbin{\times}Y_y|.
\]
The face edges have area zero and the traces agree across them. The area
formula and exhaustion away from the filled vertices therefore give
\[
 \int_P|DY|^2=2\Area(S_\Tri)\le200\pi.
\]
This is also a Sobolev energy bound across each filled interior vertex.
For example, in a local coordinate centered at such a point, a cutoff
which rises logarithmically from $0$ at radius $\epsilon^2$ to $1$ at
radius $\epsilon$ has squared gradient integral
$2\pi/\log(1/\epsilon)$. Inserting these cutoffs in the weak derivative
identity and letting $\epsilon\to0$ extends the bounded coordinate
functions of $Y$ across the puncture. The weak derivatives retain their
$L^2$ bounds. Since vertical projection does not increase Euclidean
length, we obtain
\begin{equation}\label{eq:compactness-energy}
 u\in W^{1,2}(\operatorname{int}P,\RR^2),
 \qquad \int_P|Du|^2\le200\pi.
\end{equation}

\smallskip\noindent\emph{The source annulus estimate.}
We use Theorem~\ref{thm:annulus-duality}: a plane annulus of modulus $M$
has crossing-curve extremal length $M$ and essential-loop extremal length
$1/M$. These identities are preserved by conformal equivalence.
Fix $z\in\operatorname{int}P$ and radii satisfying
\[
 h<r<R,\qquad R-r>2h,\qquad
 \overline B(z,R+h)\subset\operatorname{int}P.
\]
For $A=\{r<|x-z|<R\}$, Proposition~\ref{prop:source-correction} gives
\[
 M:=\Mod(p_P^{-1}(A))
 \ge\frac{1}{2\pi}\log\frac{R-h}{r+h}.
\]
All interior cusp points in this annulus are filled. The map $Q_P$
identifies it conformally with the plane annulus $\kappa(A)$, whose two
boundary components are Jordan curves. Its image under $\widetilde I$
has the same modulus. Equip that image with the Euclidean induced metric
of $S_\Tri$, which is a conformal metric of area at most $100\pi$ by
\eqref{eq:compactness-area}.

Set $D=\diam g(\overline B(z,r))$. Every essential Jordan loop in the
image annulus projects to a Jordan curve enclosing
$g(\overline B(z,r))$. Its Euclidean length is at least $2D$: projection
onto the line joining any two points of the enclosed set traverses their
separation in both directions. Its length on the surface is at least
that projected length. Hence the definition of extremal length gives
\[
 \frac1M\ge\frac{(2D)^2}{100\pi}.
\]
Filled points cause no change in this argument. Curves reaching one point
have analytic $2$-modulus zero, as the density
$1/(|\zeta|\log(1/|\zeta|))$ in a sufficiently small conformal coordinate
disk has finite squared integral and infinite integral along every curve
reaching its center. One may therefore restrict the curve families to
those avoiding the finitely many filled points. We conclude that
\begin{equation}\label{eq:compactness-source-annulus}
 \diam g(\overline B(z,r))^2
 \le\frac{50\pi^2}{\log((R-h)/(r+h))}.
\end{equation}
In particular, whenever compact subsets of $\Omega$ are eventually
contained in $\operatorname{int}P_n$, this estimate gives asymptotic
equicontinuity of $g_n$ on those compact sets. For points $x_n,y_n$ in a
fixed compact set with $|x_n-y_n|\to0$, choose a fixed admissible outer
radius and take $r_n=\max\{2h_n,|x_n-y_n|\}$. The denominator in
\eqref{eq:compactness-source-annulus} tends to infinity.

\smallskip\noindent\emph{Continuity on the closed reference triangle.}
Let $\varphi:\Tri\to P$ be the normalized ordinary Riemann map supplied
by Theorem~\ref{thm:jordan-riemann} for the fixed finite disk, and set
\[
 v=u\varphi=g\kappa^{-1}\varphi.
\]
This homeomorphism fixes the three corners and maps each side onto itself.
Conformal invariance of Dirichlet energy and
\eqref{eq:compactness-energy} give
\[
 \int_\Tri|Dv|^2\le200\pi.
\]
On the tile $\sigma(\Tri)$, $\sigma\in\mathcal R$, extend $v$ by
$\widetilde v=\sigma v\sigma^{-1}$. The definitions agree on shared sides
because $v$ preserves each side, and they agree at tile vertices because
$v$ fixes the corners. Reflecting the inverse gives the inverse of this
extension, so $\widetilde v$ is a homeomorphism of the plane. Sobolev
gluing along sides, followed by the same point-removal argument at tile
vertices, gives $\widetilde v\in W^{1,2}_{\mathrm{loc}}(\CC)$.

Every unit tile meeting $B(x,1)$ is contained in $B(x,2)$. The tiles have
disjoint interiors and area $\sqrt3/4$, so at most $30$ meet this disk.
Thus
\[
 \int_{B(x,1)}|D\widetilde v|^2\le30\cdot200\pi.
\]
For $0<s<1$, put
$D_s=\diam\widetilde v(\overline B(x,s))$. For almost every
$t\in(s,1)$ the image of the circle of radius $t$ is a rectifiable Jordan
curve enclosing this set. Its length $\mathcal L(t)$ satisfies
$\mathcal L(t)\ge2D_s$.
Cauchy--Schwarz, with $\nu$ the unit tangent to the circle, gives
\[
 \mathcal L(t)^2\le2\pi t
 \int_{|\zeta-x|=t}|D\widetilde v(\zeta)\nu|^2\dd s.
\]
Integration against $\dd t/t$ yields
\[
 4D_s^2\log(1/s)
 \le2\pi\int_{B(x,1)}|D\widetilde v|^2
 \le12000\pi^2.
\]
Define a modulus of continuity $\mu$ by
\[
 \mu(0)=0,\qquad
 \mu(s)=\min\left\{1,\sqrt{\frac{3000\pi^2}{\log(1/s)}}\right\}
 \quad(0<s<1),\qquad
 \mu(s)=1\quad(s\ge1).
\]
Since $\diam\Tri=1$, we have the uniform estimate
\begin{equation}\label{eq:compactness-forward-modulus}
 |v(x)-v(y)|\le\mu(|x-y|),\qquad x,y\in\Tri.
\end{equation}
It includes points on the sides and at the corners.

\smallskip\noindent\emph{Small source faces give target coverage.}
Define
\[
 m=\max_{\text{closed source faces }\tau}
       \diam\bigl(\varphi^{-1}(\tau)\bigr),
 \qquad \eta=\varphi^{-1}\kappa\varphi,
 \qquad G=g\varphi.
\]
The points $\varphi(x)$ and $\kappa\varphi(x)$ lie in a common closed
source face. Therefore $\|\eta-\id\|_\infty\le m$ and, since $G=v\eta$,
\begin{equation}\label{eq:compactness-correction}
 \|G-v\|_\infty\le\mu(m),\qquad
 \|\eta^{-1}-\id\|_\infty\le m.
\end{equation}
For the second inequality substitute $y=\eta(x)$ in the displacement
bound for $\eta$.

Let
\[
 \delta=\sup_{w\in\Tri}\dist(w,W)
\]
be the covering radius of the target vertices. Given $w\in\Tri$, choose
$x\in\Tri$ with $v(x)=w$, and choose a vertex $a_\tau$ of a closed source
face containing $\varphi(x)$. Put $x_\tau=\varphi^{-1}(a_\tau)$. Then
$|x-x_\tau|\le m$ and $v(x_\tau)=g(a_\tau)$, because $\kappa$ fixes
vertices. By \eqref{eq:compactness-forward-modulus},
\begin{equation}\label{eq:compactness-coverage}
 \delta\le\mu(m).
\end{equation}
Reflection makes $X$ a $\delta$-net of the entire plane. Every empty
circumdisk consequently has radius at most $\delta$, and the graph
formula gives
\begin{equation}\label{eq:compactness-height}
 0\le t\le\delta\quad\hbox{on }\overline S.
\end{equation}

\smallskip\noindent\emph{Local area controls target annuli.}
We now obtain an estimate for $v^{-1}$. For $z\in\CC$ and $s\ge\delta$,
apply the hyperbolic isometry
\[
 (x,t)\longmapsto\bigl((x-z)/s,t/s\bigr).
\]
By \eqref{eq:compactness-height}, the transformed graph over $B(z,s)$
lies in $|x|<1$, $0\le t\le1$. The conformal scale of $\mathcal B$ is
again at least $2/5$ there. Apply Theorem~\ref{thm:convex-area} to the
transformed whole hull and undo the dilation. We obtain
\begin{equation}\label{eq:compactness-local-area}
 \Area\bigl(p^{-1}(B(z,s))\bigr)\le100\pi s^2,
 \qquad s\ge\delta.
\end{equation}

Fix $\delta\le r<R$, write $\ell=\log(R/r)$, and let
\[
 A_H=p^{-1}\bigl(\{r<|x-z|<R\}\bigr)
\]
be the filled conformal annulus. On $A_H$ multiply the Euclidean induced
length metric by the density $\rho(x,t)=1/|x-z|$. If
$\mathcal A_z(s)=\Area(p^{-1}(B(z,s)))$, Stieltjes integration and
\eqref{eq:compactness-local-area} give
\begin{align*}
 \int_{A_H}\rho^2\dd A
 &\le\frac{\mathcal A_z(R)}{R^2}
      +2\int_r^R\frac{\mathcal A_z(s)}{s^3}\dd s\\
 &\le100\pi(1+2\ell).
\end{align*}
The omitted inner boundary term is nonpositive. The area distribution
$\mathcal A_z$ is continuous: a graph portion over a circle has surface area zero,
as follows face by face. Thus the displayed integration is valid also
when a boundary circle contains vertices.

Along every crossing curve, Euclidean surface arclength dominates the
absolute variation of the horizontal radius. Its $\rho$-length is
therefore at least $\ell$. This also holds across a filled vertex by
continuity and partitioning the curve. The crossing-family definition of
extremal length now gives
\begin{equation}\label{eq:compactness-target-annulus}
 \Mod(A_H)\ge\frac{\ell^2}{100\pi(1+2\ell)}
 \ge\frac{\ell}{300\pi}\quad\hbox{if }\ell\ge1.
\end{equation}

\smallskip\noindent\emph{Continuity of the inverse in the reflected chart.}
The conformal homeomorphism
\[
 q=\varphi^{-1}Q_P\widetilde I^{-1}:S_\Tri\longrightarrow\Tri
\]
respects the sides and corners. On the portion over $\sigma(\Tri)$,
extend it by $\sigma q\sigma^{-1}$, using the ambient reflection on the
surface and the planar reflection on its image. The maps agree along
shared sides. The ambient reflection restricts to an anticonformal
involution of the intrinsic surface, and its fixed curve is the side
curve. Theorem~\ref{thm:reflection} therefore makes the extension
conformal across that curve away from marked points. At a marked point
the glued map is continuous and bounded in a local cusp-filling
coordinate, so Theorem~\ref{thm:removable} applies. Six reflected sectors meet at a
tile vertex; the same argument applies there. The extension and the
tilewise inverses are continuous. Thus we obtain a conformal
homeomorphism
\[
 Q:\overline S\longrightarrow\CC.
\]
It sends the graph over every closed tile onto that same tile.

Put $\Psi=Qp^{-1}$. This planar homeomorphism preserves every tile and
satisfies
\[
 |\Psi(w)-w|\le1,\qquad
 \Psi|_\Tri=\varphi^{-1}\kappa f=v^{-1}.
\]
For a planar annulus $A$ we have
\begin{equation}\label{eq:compactness-chart-modulus}
 \Mod(\Psi(A))=\Mod(p^{-1}(A)),
\end{equation}
because $Q$ is conformal.

Take $\delta\le r\le e^{-1}$ and
$A=\{r<|w-z|<1\}$. Tile preservation gives
$\Psi(B(z,1))\subset B(z,2)$, hence $\Area(\Psi(A))\le4\pi$.
Every essential Jordan loop in $\Psi(A)$ encloses
$\Psi(\overline B(z,r))$. Writing
$d_r=\diam\Psi(\overline B(z,r))$, the same line-projection argument
used for \eqref{eq:compactness-source-annulus} bounds its length below
by $2d_r$. Applying Theorem~\ref{thm:annulus-duality} to this plane
annulus with Jordan boundary components, the Euclidean conformal metric
on $\Psi(A)$ yields
\[
 \frac1{\Mod(\Psi(A))}\ge\frac{(2d_r)^2}{4\pi}
 =\frac{d_r^2}{\pi}.
\]
Together with \eqref{eq:compactness-target-annulus} and
\eqref{eq:compactness-chart-modulus}, this proves
\[
 d_r^2\le\frac{300\pi^2}{\log(1/r)}.
\]
For $x,y\in\Tri$ with $|x-y|\le r$, center the annulus at $x$ and use
$\Psi|_\Tri=v^{-1}$. Since the values lie in $\Tri$, we conclude that
\begin{equation}\label{eq:compactness-inverse-modulus}
 |v^{-1}(x)-v^{-1}(y)|^2
 \le\min\left\{1,\frac{300\pi^2}{\log(1/r)}\right\},
 \qquad \delta\le r\le e^{-1}.
\end{equation}
The reflected construction makes this estimate valid at boundary points
as well as interior points.

\smallskip\noindent\emph{Ordinary charts and removal of the correction.}
Return to the sequence, using subscripts for all the preceding
constructions and taking $\varphi=\phi_n$ for the $n$th disk.
Theorem~\ref{thm:rado} gives
\[
 \phi_n\longrightarrow\phi\quad\hbox{uniformly on }\Tri.
\]
It follows that $m_n\to0$. To verify this, suppose that two points
$x_n,y_n$ in a common closed source face have preimages whose distance is
bounded below along a subsequence. A further subsequence has
\[
 \phi_n^{-1}(x_n)\longrightarrow\alpha,\qquad
 \phi_n^{-1}(y_n)\longrightarrow\beta,\qquad \alpha\ne\beta.
\]
Since $|x_n-y_n|\le h_n\to0$, uniform convergence of $\phi_n$ gives
$\phi(\alpha)=\phi(\beta)$, contradicting injectivity. Taking pairs which
realize the maximal face diameter proves the assertion. Equations
\eqref{eq:compactness-coverage} and \eqref{eq:compactness-height} now give
$\delta_n\to0$ and uniform convergence of the reflected dome heights to
zero for the entire original sequence.

The maps $v_n$ share the modulus \eqref{eq:compactness-forward-modulus}.
Their inverses are asymptotically equicontinuous by
\eqref{eq:compactness-inverse-modulus}: if $|x_n-y_n|\to0$, choose
\[
 r_n=\max\{\delta_n,|x_n-y_n|\}.
\]
The finite target sets have positive covering radius, so $r_n>0$, and
eventually $r_n\le e^{-1}$. The right side of
\eqref{eq:compactness-inverse-modulus} then tends to zero.
Arzel\`a--Ascoli, applied successively to these two families, gives a
further subsequence of any prescribed subsequence on which
\[
 v_n\longrightarrow v_\infty,\qquad
 v_n^{-1}\longrightarrow w_\infty
 \quad\hbox{uniformly on }\Tri.
\]
Passing to the identities $v_nv_n^{-1}=v_n^{-1}v_n=\id$ gives
$v_\infty w_\infty=w_\infty v_\infty=\id$. For example, the error in
$v_n(v_n^{-1}(x))\to v_\infty(w_\infty(x))$ is bounded by
$\|v_n-v_\infty\|_\infty$ plus the modulus of continuity of
$v_\infty$ evaluated at $\|v_n^{-1}-w_\infty\|_\infty$.
Thus $v_\infty$ and $w_\infty$ are mutually inverse homeomorphisms.

By \eqref{eq:compactness-correction},
$\|\eta_n^{\pm1}-\id\|_\infty\le m_n\to0$. The exact identities
\[
 g_n\phi_n=v_n\eta_n,\qquad
 \phi_n^{-1}f_n=\eta_n^{-1}v_n^{-1}
\]
therefore imply
\[
 g_n\phi_n\longrightarrow v_\infty,\qquad
 \phi_n^{-1}f_n\longrightarrow w_\infty
 \quad\hbox{uniformly on }\Tri.
\]
Set $F=\phi w_\infty$. Uniform convergence of the ordinary charts gives
$f_n\to F$ uniformly on $\Tri$. This $F$ is a homeomorphism onto
$\overline\Omega$, has the prescribed corner values, and satisfies
$F^{-1}\phi=v_\infty$.

For the selected subsequence, let $K\Subset\Omega$. Uniform convergence of the boundary
parametrizations makes $K\subset\operatorname{int}P_n$ eventually: the
straight homotopy between sufficiently close parametrizations avoids $K$
and preserves winding number about each of its points. We also have
\[
 \phi_n^{-1}\longrightarrow\phi^{-1}\quad\hbox{uniformly on }K.
\]
Indeed, otherwise choose $x_n\in K$ witnessing failure, pass to limits
$x_n\to x$ and $\phi_n^{-1}(x_n)\to\alpha$, and use uniform convergence
of $\phi_n$ to get $\phi(\alpha)=x$. This forces
$\alpha=\phi^{-1}(x)$, a contradiction. Consequently
\[
 g_n|_K=(g_n\phi_n)\phi_n^{-1}|_K
 \longrightarrow v_\infty\phi^{-1}|_K=F^{-1}|_K
\]
uniformly. The energy estimate proves the finite-disk assertion, target
coverage controls the dome heights, and the two continuity estimates
give the mutually inverse closed-domain limits.
\end{proof}

\section{Whole marked limits of lattice domes}\label{sec:05-lattice-limits}

We use the lattice and reflection-cover notation of
Sections~\ref{sec:02-preliminaries} and~\ref{sec:03-source-scales}.
We first record the conformal and metric inputs and establish the
compactification needed to identify the ends. We then prove that a
normalized lattice limit fills the entire convex boundary and that each
of its finite cusp labels remains isolated in the approximating ideal sets.

\begin{theorem}[Conformal metrics on convex boundaries]\label{thm:convex-conformal}
Let $Y$ be a compact proper ideal set whose hyperbolic convex hull has
dimension at least two, and let $S=\partial\CH(Y)$ in the unit Poincare
ball, with the doubled convention in dimension two. The induced Euclidean
and hyperbolic path metrics give the same conformal structure on each
boundary component. In its conformal charts the Euclidean path metric has
a conformal length density, with the corresponding squared density as area
measure. Its path lengths and induced area can therefore be used to
compute extremal lengths on this Riemann surface.
\end{theorem}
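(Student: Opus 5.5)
The plan is to reduce everything to one fact: the hyperbolic intrinsic metric on $S$ is a surface of bounded integral curvature. Such a surface carries isothermal coordinates, and then we transfer to the Euclidean metric by a pointwise conformal factor. Since $Y$ is compact and proper, $S=\partial\CH(Y)$ lies in the open ball and closes up only at the ideal set $Y$. Each boundary component is therefore a topological surface contained in the open ball. On the open ball the Poincar\'e metric is $\lambda(x)|\dd x|$ with $\lambda(x)=2/(1-|x|^2)$, which is continuous and positive. I would first show that for every rectifiable path $\gamma$ in $S$,
\[
 \operatorname{length}_{\HH}(\gamma)=\int_\gamma\lambda\,\dd s_E .
\]
This is proved on short subarcs: $\lambda$ is nearly constant there, so the two length functionals are comparable up to factors tending to one, and a partition argument gives the identity. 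Hence the two induced path metrics are locally bi-Lipschitz, have the same rectifiable curves and the same topology, and the conformal factor between them is the continuous function $\lambda|_S$.

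Next I would produce the conformal charts from the hyperbolic side. By Alexandrov's theorem on convex surfaces, the intrinsic hyperbolic metric on the boundary of a convex body in $\HH^3$ has curvature bounded below by $-1$ in the Alexandrov sense. In the two-dimensional case I apply this to the metric double, which is again the limit of boundaries of thickened convex bodies. A two-dimensional Alexandrov space with curvature bounded below has bounded integral curvature. Reshetnyak's theorem then supplies local isothermal coordinates $\zeta$ in which the metric is $e^{u(\zeta)}|\dd\zeta|$. Here $u$ is a difference of subharmonic functions, lengths of curves are the integrals of $e^{u}|\dd\zeta|$ along them, and the Hausdorff area is $\int e^{2u}$. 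The transition maps between such charts are conformal, so this defines the Riemann surface structure. Combining with the first step, the Euclidean induced path metric in the same chart has density $\rho=\lambda^{-1}e^{u}$ (with $\lambda$ evaluated at the point of $S$). So it determines the same conformal structure, and its lengths are $\int\rho|\dd\zeta|$. Its induced area is $\int\rho^2$, because area measures scale by $\lambda^{-2}$ under a continuous conformal change; this is checked on small sets where $\lambda$ is almost constant.

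Finally, the extremal-length assertion follows formally. Extremal length on the Riemann surface is computed with Borel densities in conformal charts. Multiplying an admissible density by $\rho$ converts Euclidean path-length and area estimates into chart estimates and back, and a metric density with a finite-area singular set causes no trouble because it is absolutely continuous. The main obstacle is the second step. One must apply Reshetnyak's isothermal coordinate theorem to a genuinely nonsmooth convex surface, including its doubled planar degenerations, and check that Reshetnyak's length formula holds for every rectifiable curve and not only almost every one. I would first try approximating $Y$ by finite ideal sets, so that $S$ becomes an ideal polyhedral surface where the statement is elementary, as in the facewise computation of Section~\ref{sec:04-compactness}. I would then pass to the limit via Reshetnyak's convergence theorem for metrics with uniformly bounded curvature. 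If that route becomes delicate, I would simply invoke \cite[Theorem~11.2]{luo2024}, which records exactly this conclusion.
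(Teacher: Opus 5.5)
Your outline is correct, but it is not the paper's route. The paper proves nothing here: it imports \cite[Theorem~11.2]{luo2024}, applied to the convex boundary as in \S11.2 there, which is your fallback. Your reconstruction has three steps: a continuous conformal factor $\lambda=2/(1-|x|^2)$ between the two length structures, isothermal charts for the intrinsic hyperbolic metric from Alexandrov's curvature bound and Reshetnyak's theorem, and then $\rho=\lambda^{-1}e^{u}$ with area $\int\rho^2$. This would work for arbitrary convex surfaces. For hulls of ideal sets, however, the step you call the main obstacle is unnecessary. Each component of $\partial\CH(Y)$, with its intrinsic hyperbolic path metric, is a complete hyperbolic surface locally isometric to $\HH^2$ (Thurston; Epstein--Marden): it is pleated, made of totally geodesic pieces bent along geodesics. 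In the planar case the relative boundary of $\CH(Y)$ consists of complete geodesics. Hence the double is again locally isometric to $\HH^2$ (two disjoint planes if $Y$ is a whole circle), with no thickening or limiting argument. Hyperbolic charts are therefore already isothermal, with smooth density $\vartheta|\dd\zeta|$. You need neither Reshetnyak's length formula for every rectifiable curve nor approximation by finite ideal sets. The statement then reduces to your first step. The Euclidean length element is $(\lambda\circ\iota)^{-1}\vartheta|\dd\zeta|$, where the inclusion $\iota$ is $1$-Lipschitz from the intrinsic to the ambient metric. So $\rho$ is continuous and positive, and the area identity follows from your local near-constancy argument. This is the mechanism the paper uses by hand for ideal polygonal surfaces in Section~\ref{sec:04-compactness}, with the height factor $t$ in upper half-space. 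The citation gives full generality at no cost to the paper; your route, simplified this way, gives a short self-contained proof adapted to the convex hulls the paper actually uses.
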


This is \cite[Theorem~11.2]{luo2024} applied to the convex boundary,
as in \S11.2; see also
\cite[\S2.3]{luo2024}. It permits conformal area arguments even when the
convex boundary is nonsmooth.

\begin{theorem}[Intrinsic-distance convergence]\label{thm:alexandrov}
Let compact convex bodies $K_n$ in Euclidean or hyperbolic three-space
converge in Hausdorff distance to a compact convex body $K$ with nonempty
interior, and suppose each $K_n$ has nonempty interior. Put
$S_n=\partial K_n$ and $S=\partial K$. If $x_n,y_n\in S_n$ converge to
$x,y\in S$, respectively, then their intrinsic distances converge:
$d_{S_n}(x_n,y_n)\to d_S(x,y)$.
\end{theorem}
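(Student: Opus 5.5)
This is Alexandrov's classical convergence theorem for convex surfaces. I would prove the Euclidean case directly and then reduce the hyperbolic case to it through the Klein model. In that model hyperbolic convex bodies are Euclidean convex bodies, Hausdorff convergence in either metric is equivalent on compact sets, and the hyperbolic length of a curve is $\int\theta(\gamma)\,|\dd\gamma|$ for a continuous positive density $\theta$. So it suffices to prove the Euclidean statement in a slightly stronger form: the relevant curves converge uniformly with convergent Euclidean lengths. Such curves also have convergent $\theta$-weighted lengths. This is a Reshetnyak-type continuity statement: the length measures converge weakly and $\theta$ is continuous and bounded on a compact set.

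\emph{Uniform geometry.} Fix an interior point $o$ of $K$ and $0<\rho<R$ with $B(o,2\rho)\subset K\subset B(o,R/2)$. Hausdorff convergence gives $B(o,\rho)\subset K_n\subset B(o,R)$ eventually. Radial projection from $o$ onto $\partial K_n$ is then bi-Lipschitz from the unit sphere, with constants depending only on $\rho,R$. This has two consequences.
\begin{enumerate}
\item Intrinsic diameters of the surfaces are uniformly bounded.
\item There is a local estimate $d_{S_n}(a,b)\le C|a-b|$ for $a,b\in S_n$, and likewise on $S$, with $C=C(\rho,R)$.
\end{enumerate}

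\emph{Lower semicontinuity.} Take intrinsic shortest paths $\gamma_n$ on $S_n$ from $x_n$ to $y_n$, parametrized proportionally to arclength. By the diameter bound their lengths are bounded. Arzel\`a--Ascoli gives a subsequence converging uniformly to a curve $\gamma$, which lies on $S$ by Hausdorff convergence and joins $x$ to $y$. Lower semicontinuity of length gives $d_S(x,y)\le L(\gamma)\le\liminf d_{S_n}(x_n,y_n)$.

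\emph{Upper bound.} Let $\gamma$ be a shortest path on $S$ from $x$ to $y$, and fix $\varepsilon>0$. Let $H_\varepsilon$ be the homothety of ratio $1+\varepsilon$ centred at $o$. Eventually $K_n\subset H_\varepsilon(K)$, so the curve $H_\varepsilon\gamma$ lies in the complement of $\operatorname{int}K_n$. The Euclidean nearest-point projection $\pi_n$ onto $K_n$ is $1$-Lipschitz and maps that complement onto $S_n$. Hence $\pi_nH_\varepsilon\gamma$ is a curve on $S_n$ of length at most $(1+\varepsilon)d_S(x,y)$. Its endpoints lie within $o(1)+O(\varepsilon)$ of $x_n$ and $y_n$. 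Joining them by the local estimate gives
\[
 \limsup d_{S_n}(x_n,y_n)\le(1+\varepsilon)d_S(x,y)+C'\varepsilon ,
\]
and letting $\varepsilon\to0$ completes the Euclidean case.

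For the hyperbolic case run the same argument in Klein coordinates. The lower bound goes through as before. For the upper bound, the projected curves converge uniformly to $\gamma$ as $n\to\infty$ and then $\varepsilon\to0$. Their lengths are squeezed between $(1+\varepsilon)L(\gamma)$ from above and, via the lower semicontinuity step, $L(\gamma)$ from below. So their Euclidean lengths converge, and by the weighted-length remark their hyperbolic lengths converge too. The local estimate also transfers, since $\theta$ is bounded above and below on $B(o,R)$.

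\emph{Main obstacle.} The delicate step is this hyperbolic transfer. Euclidean nearest-point projection does not control hyperbolic length directly, so everything hinges on the weighted-length continuity claim for uniformly convergent curves with convergent Euclidean lengths, which must be checked carefully. The uniform bi-Lipschitz radial parametrization, which supplies both the diameter bound and the local estimate, is routine by comparison.
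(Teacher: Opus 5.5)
Your Euclidean argument is correct and is the classical Alexandrov-type proof. Arzel\`a--Ascoli on shortest paths gives the lower bound. The homothety about an interior point followed by the $1$-Lipschitz nearest-point projection gives the upper bound. The uniform radial bi-Lipschitz parametrization supplies both the diameter bound and $d_{S_n}(a,b)\le C|a-b|$. The paper proves nothing here: it quotes the statement as the compact, full-dimensional case of Luo--Wu, Theorem~5.1. A self-contained proof is therefore a genuinely different route, but as written your hyperbolic case does not go through.

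The Klein model is projective, not conformal. Hyperbolic length there is $\int F(\gamma,\dot\gamma)\,dt$ with the anisotropic norm
\[
 F(p,v)^2=\frac{|v|^2}{1-|p|^2}+\frac{(p\cdot v)^2}{(1-|p|^2)^2},
\]
not $\int\theta(\gamma)|\dot\gamma|\,dt$ for a scalar density. Your justification tests the scalar length measures $|\dot\gamma_n|\,dt$ against a continuous $\theta$, which handles only isotropic weights. For $F$ you also need the tangent directions to converge.

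The conclusion is still true via the vector-valued Reshetnyak continuity theorem. Take the diagonal curves $\gamma_n=\pi_nH_{\varepsilon_n}\gamma$ in the parametrization of $\gamma$.
\begin{enumerate}
\item The vector measures $\dot\gamma_n\,dt$ on $[0,1]$ converge weakly-$*$ to $\dot\gamma\,dt$. Integrate by parts against $C^1$ test functions, then use the uniform length bound.
\item Their total variations converge by your squeeze.
\item Reshetnyak's theorem then gives $\int_0^1F(\gamma(t),\dot\gamma_n)\,dt\to\int_0^1F(\gamma(t),\dot\gamma)\,dt$.
\item Uniform continuity of $F$ on a compact neighbourhood of $K$ in the open Klein ball lets you replace $\gamma(t)$ by $\gamma_n(t)$.
\end{enumerate}

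Alternatively, avoid the issue by running the upper bound intrinsically in $\mathbb H^3$. Nearest-point projection onto a closed convex set is $1$-Lipschitz for the hyperbolic metric, since $\mathbb H^3$ is CAT$(-1)$. The geodesic radial stretch $\exp_o(v)\mapsto\exp_o((1+\varepsilon)v)$ has Lipschitz constant $\sup_{0<r\le R}\sinh((1+\varepsilon)r)/\sinh r=1+O_R(\varepsilon)$ on the hyperbolic ball $B(o,R)$. Because it preserves rays from $o$ and the radial function of $K$ is Lipschitz, it carries $K$ to a set containing a neighbourhood of $K$. Your homothety argument then runs verbatim. Finally, take your metric comparison constants on such a compact neighbourhood rather than on $B(o,R)$, which need not lie inside the Klein ball.
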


This is the compact, full-dimensional case of
\cite[Theorem~5.1]{luo2024}. We will apply it to convex boundary spheres
and then exclude paths through the artificial caps.

\begin{lemma}\label{lem:lattice-limits-source}
The following hold for the lattice surface $\Sig$:
\begin{enumerate}[label=(\roman*)]
\item The surface is complete and proper in its intrinsic metric.
\item There is an orientation-preserving conformal homeomorphism
$q:\CC\setminus\Lam\to\Sig$ that carries each punctured Euclidean face
to its corresponding ideal face and extends across each $v\in\Lam$
to the cusp labeled $v$. After these cusps are filled, the extension is
a conformal homeomorphism from $\CC$ onto the filled surface.
\end{enumerate}
\end{lemma}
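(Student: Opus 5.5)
For (i), describe $\Sig$ concretely. The Delaunay decomposition of $\Lam$ is the unit equilateral tiling. Every empty circumdisk has radius $1/\sqrt3$, so by the graph description in the proof of Proposition~\ref{prop:reflection-cover}, $\Sig$ is the union of congruent ideal triangles on hemispheres of radius $1/\sqrt3$. They are glued along semicircles by the tiling's edge pattern, with zero shear: the tops of the semicircles match.

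Take a standard decoration with cusp circumference $6$ and corner arcs $1$. Removing the open horodisks leaves a compact truncated triangle in each face, and these are pairwise isometric. The truncated surface $\Sig^{\mathrm t}$ is therefore a locally finite union of isometric compact pieces. Each piece meets only boundedly many neighbors, and it is invariant under the translation group $\Lam$, which acts by hyperbolic isometries $(x,t)\mapsto(x+v,t)$. Hence $\Sig^{\mathrm t}$ is cocompact under a group of isometries, so it is complete and proper.

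Each removed cusp neighborhood is isometric to a standard complete cusp $\{y\ge1\}/\langle z\mapsto z+6\rangle$, and this is complete and proper. Gluing finitely many such pieces into each ball gives (i). Concretely, a closed ball of radius $R$ about a point in the truncated part meets only finitely many faces, because each face crossing costs a definite length outside the horodisks. Its intersection with each face is closed and bounded in a complete cusp-adjusted picture, so Hopf--Rinow gives properness.

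For (ii), build $q$ facewise from the maps $\psi$ of Proposition~\ref{prop:source-correction}, inverted, with $h=1$. On each punctured Euclidean unit face, take the inverse of the normalized conformal map from the compactified ideal face onto that equilateral face. This map sends each vertex to its labeled cusp. The paired-symmetry equivariance proved in Proposition~\ref{prop:source-correction} shows that traces on a common edge agree: both fix the midpoint-top at height $1/2$ and the endpoints. Theorem~\ref{thm:reflection} then gives conformality across edges.

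At each $v\in\Lam$, the six sectors form a horodisk modulo a parabolic of period $6$. The Euclidean side has cone angle $2\pi$, so the coordinate $\xi$ is an honest planar coordinate there. The argument in Proposition~\ref{prop:source-correction} gives a map that is bounded, holomorphic off the puncture and injective. By Theorem~\ref{thm:removable} it extends with order one at the filled cusp. The glued map is a bijection of punctured faces onto ideal faces, because faces and edge adjacencies correspond. This yields an orientation-preserving conformal homeomorphism $q:\CC\setminus\Lam\to\Sig$ that extends across each cusp.

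\textbf{Main obstacle.} The real difficulty is global surjectivity and properness of the filled extension onto the filled surface: showing that $q$ is a homeomorphism and not merely a local one. I would handle it by noting that $q$ is a face-by-face bijection respecting all incidences, so it is a bijection. Its inverse is continuous because it is continuous on each closed face, the faces are locally finite on both sides, and the faces meet a compact set only finitely often. I would also check that the filled points have neighborhoods mapping onto the filled horodisk neighborhoods, which follows from the sector matching. No end at infinity needs to be added in (ii): $\CC$ itself is the image domain, and the residual end corresponds to $\infty$ only in $\widehat\Sig$.
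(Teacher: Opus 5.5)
Your part (ii) is essentially the paper's argument. The lifted faces are the ideal triangles on hemispheres of radius $1/\sqrt3$. The normalized face maps of Proposition~\ref{prop:source-correction} agree on shared edges by uniqueness and symmetry. Reflection and removability give conformality across edges and at the filled cusps. Local finiteness, together with preservation of closed faces, makes the glued map a homeomorphism.

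Part (i), however, has a gap at the step that carries its content: that a bounded intrinsic ball meets only finitely many faces and cusps. The reason you give, that each face crossing costs a definite length outside the horodisks, is false for the standard decoration. The standard horodisks are pairwise tangent at the tops of the edges. Hence each edge lies in the union of the two open horodisks at its ends, except for that single tangency point. A path therefore changes faces either at that point, at no cost, or inside a horodisk. For example, in the root chart of a cusp, the horizontal path at height $Y>1$ crosses all six incident faces, stays inside the horodisk, and has length $6/Y$. Length spent outside horodisks thus bounds nothing.

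Cocompactness of $\Sig^{\mathrm t}$ under $\Lam$ does not close the gap by itself either. It concerns the truncated part, whereas you need control of $d_{\Sig}$-balls, which may pass in and out of horodisks. Your step \emph{gluing finitely many cusp pieces into each ball} presupposes the finiteness being proved.

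The cocompact idea can be rescued if you run it on $\Sig$ itself. Use $\Lam$-invariance on the thick part together with the collars $1\le y\le2$ of the cusps, and the standard cusp model above height $2$. These give one $\varepsilon>0$ such that every closed $\varepsilon$-ball of $\Sig$ is compact. Then $\Sig$ is complete, and Hopf--Rinow gives properness.

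The paper's argument is shorter. Every face lies at height $t\le1/\sqrt3$, so $\sqrt{|\dd x|^2+\dd t^2}/t\ge\sqrt3\,|\dd x|$, and vertical projection multiplies lengths by at most $1/\sqrt3$. A ball of radius $R$ therefore projects into a planar disk of radius $R/\sqrt3$ about the projected center, and so meets only finitely many faces and cusps. Since depth in a cusp grows like the logarithm of the height, the ball also avoids a deep tail of each of these cusps. It is thus a closed subset of a finite union of compact truncated faces and cusp collars. Completeness follows because a Cauchy sequence lies in one ball.
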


Thus the facewise conformal identification equips the filled lattice
surface with the ordinary plane coordinate.

\begin{proof}
We extend the finite source identification to the lattice and then check
that intrinsic balls are compact.

\emph{Conformal filling.}
Each empty circumcircle of a unit lattice face gives its supporting
hemisphere in the lattice hull. The lifted faces are therefore exactly
the ideal triangles used in the source construction. A conformal map
between an ideal face and its Euclidean equilateral face is uniquely
determined by the three ordered vertex values, by
Theorem~\ref{thm:jordan-riemann}
\cite[Theorems~2.22--2.24 and~4.5]{mcmullen2025}. Consequently
the face restrictions of the identifications in
Proposition~\ref{prop:source-correction} agree on overlapping finite
equilateral patches. Apply that proposition in a finite hexagonal patch
containing any prescribed face, edge, or complete vertex star. It gives
edge agreement and conformality across the edges and at the filled cusp.
These local identifications therefore glue over the whole lattice.
Their inverses give $q$. The triangulation is locally finite, and the maps
and their inverses preserve every closed face. They consequently give a
homeomorphism on the filled surfaces, conformal at every finite point.

\emph{Compact intrinsic balls.}
Let $p_{\mathrm{vert}}$ be vertical projection on the lattice dome. Every lifted face
has height at most $1/\sqrt3$. Hence every rectifiable curve $\gamma$
on $\Sig$ satisfies
\[
 \operatorname{length}_{E}(p_{\mathrm{vert}}\gamma)
 \le \frac1{\sqrt3}\operatorname{length}_{\Sig}(\gamma).
\]
A ball of bounded intrinsic radius projects into a bounded planar disk,
which meets only finitely many lattice faces. At each lattice vertex the
six ideal sectors form a complete cusp. In a horodisk coordinate the
distance from a fixed horocycle to height $s$ grows as $\log s$; thus the
ball avoids a sufficiently deep tail of each of these finitely many
cusps. The remaining portions of the finitely many closed faces form a
compact set. Closed intrinsic balls are therefore compact. An intrinsic
Cauchy sequence lies in one such ball and converges, proving completeness.
\end{proof}

\begin{lemma}\label{lem:lattice-limits-component}
Let $E$ be a proper compact subset of the ideal sphere of the unit
Poincar\'e ball. Let $C=\CH(E)$ have dimension at least two, let
$S=\partial C$ have its intrinsic metric and conformal structure, and
let $S_0$ be a connected component of $S$.
Let $A\subset\CC$ be closed and discrete, and suppose that
$j:\CC\setminus A\to S_0$ is a conformal homeomorphism.
Set
\[
 \Phi(\xi)=\frac{2\xi}{1+|\xi|^2},\qquad
 K=\overline{\Phi(C)}.
\]
Let $\overline S$ be $\partial K$ when $K$ is three-dimensional and
the topological double of $K$ along its entire planar boundary when
$K$ is two-dimensional. In the latter case regard $\Phi$ as a map
to the corresponding sheet of the double.
Then the following hold:
\begin{enumerate}[label=(\roman*)]
\item $S_0=S$.
\item The map $\Phi\circ j$ extends to a homeomorphism
$H:\Chat\to\overline S$ such that $H(A\cup\{\infty\})=E$,
every $H(a)$ with $a\in A$ is isolated in $E$, and $H(\infty)$ is
the only possible accumulation point of $E$.
\end{enumerate}
\end{lemma}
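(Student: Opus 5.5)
The plan is to build the extension $H$ in three stages: first compactify $S$ by adjoining the ideal set, then show the component $S_0$ is already closed in the sphere $\overline S$ minus finitely structured points, and finally read off the ends of $\CC\setminus A$.

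\emph{Topology of the compactification.} Since $E$ is a proper compact subset and $C=\CH(E)$ has dimension at least two, $K=\overline{\Phi(C)}$ is a compact convex body in the closed unit ball. Its boundary (or its double in the planar case) $\overline S$ is a topological $2$-sphere, and $\overline S=\Phi(S)\sqcup E$. The set $E$ is exactly $\overline S\cap\partial\mathbb B$: the supporting planes show that no ideal point outside $E$ lies in $K$. Each component of $S$ is open in $\overline S\setminus E$, and $S$ is the whole of $\overline S\setminus E$.

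\emph{Ends of $S_0$.} The map $j$ exhibits $S_0$ as a planar surface whose ends correspond to the points of $A$ and to $\infty$. Fix $a\in A$ and a small punctured disk $U$ about $a$ with $\overline U\cap A=\{a\}$. I would show that $\Phi j(\zeta)$ converges as $\zeta\to a$ to a single point of $E$. The tool is extremal length. The annuli $\{\epsilon<|\zeta-a|<\epsilon_0\}$ have modulus $\to\infty$. By Theorem~\ref{thm:convex-conformal}, Euclidean induced lengths and areas on $S_0$ compute extremal length, and $\overline S$ has finite total Euclidean area by Theorem~\ref{thm:convex-area}. Hence there are essential loops of Euclidean length $\to0$ around $a$. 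Their images shrink, so the cluster set of the end is connected and is contained in the closed set $\overline S\setminus S_0$. A length--area argument on the concentric family then shows that the cluster set is a single point $e_a$.

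The same argument applies at $\infty$. This gives a continuous extension $H:\Chat\to\overline{S_0}\subset\overline S$. The image $H(\Chat)$ is compact and contains $\Phi(S_0)$, which is open in $\overline S$. Its complement $\overline S\setminus H(\Chat)$ is open, and its boundary points would lie in $H(\Chat)\setminus\Phi(S_0)$, a countable set (the points $e_a$ and $e_\infty$). An open subset of a sphere whose boundary is countable is either empty or the complement of a countable closed set. The latter is impossible because $\Phi(S_0)$ is a nonempty open set disjoint from it. Hence $H$ is onto, and $S_0=S$ except possibly at points of $E$. This proves (i), since every point of $S$ lies over $\overline S\setminus E$.

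\emph{Injectivity and isolation.} $H$ is injective on $\CC\setminus A$. Distinct ends must go to distinct points: if $e_a=e_b$, then $\overline S$ minus one point would contain the image of $\Chat$ minus two points as a surjection that is injective elsewhere. Removing $e_a$ would then leave the connected sphere minus a point, while the preimage $\Chat\setminus\{a,b\}$ compactifies differently. I would make this rigorous with the Jordan separation theorem: a small loop around $a$ and one around $b$ both bound small disks about $e_a$ in $\overline S$, which forces overlapping images. Thus $H$ is a continuous bijection of compact Hausdorff spaces, hence a homeomorphism, and $H(A\cup\{\infty\})=\overline S\setminus\Phi(S)=E$. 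Isolation of $H(a)$ in $E$ follows because $H$ maps a neighborhood of $a$ meeting $A$ only in $a$ onto a neighborhood of $H(a)$. Since $A$ is discrete and closed in $\CC$, its only accumulation point in $\Chat$ is $\infty$, so $H(\infty)$ is the only possible accumulation point of $E$.

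The main obstacle I expect is the single-point cluster set at each end. Short loops give shrinking diameters only along a sequence of radii, so I would use the conformal-density integral over the whole annulus to get a limit along all radii, as in the inverse-modulus estimate of Theorem~\ref{thm:closed-compactness}. The two-dimensional doubled case needs separate care at the seam.
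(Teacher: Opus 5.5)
Your architecture matches the paper's. Finite Euclidean area (Theorem~\ref{thm:convex-area}) and the conformal density (Theorem~\ref{thm:convex-conformal}) give short loops around each end, each end is filled by one point, and a continuous bijection of compact spheres finishes. Your countable-boundary argument for surjectivity, and the deduction of (i) from it, is a valid substitute for the paper's route. The paper instead shows $D_0\setminus\{\eta_a\}\subset\Phi(S_0)$ by an open--closed argument on source annuli, fills all finite punctures first, and then excludes other components because they would be open sets inside $\{\eta_\infty\}$.

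The gap is the step you flag yourself: that each end has a single limit point. Your proposed remedy misses what is actually needed. Short loops only show that the image loops $\gamma_k$ have small diameter. They say nothing about where the image of the region inside or between them lies. A priori that region could fill the large side of $\gamma_k$ in the sphere $\overline S$. Connectedness of the cluster set does not help. The Cauchy--Schwarz bound $\int L_a(t)^2\,\mathrm dt/t\le2\pi\Area(S)$ yields only a sequence of radii, and no control along all radii follows from it. Your analogy with the inverse-modulus estimate of Theorem~\ref{thm:closed-compactness} breaks at this point: in the plane an essential loop automatically encloses the inner image, but on a sphere you must first decide which side is the inside.

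The missing idea is the ``small side'' argument. Work in the compact metric sphere $(\overline S,d_K)$, where Klein lengths are at most twice the Poincar\'e--Euclidean lengths because both singular values of $D\Phi$ are at most $2$.
\begin{itemize}
\item A Jordan curve of small $d_K$-length maps into a small ball of a fixed homeomorphic round sphere. That ball has connected complement, so one complementary closed disk $D_k$ has small diameter.
\item Fix two reference points in $\CC\setminus(A\cup\overline B(a,\epsilon))$. The exterior source region is connected and its image contains their images. Invariance of domain across the circle then puts the image of the entire punctured disk $\{0<|\zeta-a|<t_k\}$ inside $D_k$.
\item The $D_k$ are nested with diameters tending to $0$. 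Hence a sequence $t_k\downarrow0$ already suffices, and $\bigcap_kD_k$ is the limit point.
\end{itemize}
Use the round image circles rather than extremal-length minimizers, so the curves are Jordan.

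Your injectivity step needs the same picture; ``forces overlapping images'' is not yet an argument. The connected image of $\Chat\setminus\overline B(a,t_k)$ avoids $\partial D_k$ and contains the reference images, so $H(b)\notin D_k\ni H(a)$ for every $b\ne a$. Alternatively, use the paper's filling of $D_0\setminus\{\eta_a\}$. Once $\overline S$ carries the doubled path metric, the planar case needs no separate treatment at the seam.
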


The hypothesis identifies only one boundary component. The conclusion
both fills its punctures and excludes every other component.

\begin{proof}
We put a compact path metric on the whole boundary sphere. The area bound
then supplies short circles around the punctures and near infinity.
Their complementary disks determine all the missing points.

\emph{A compact boundary sphere.}
In Klein coordinates $K$ is the Euclidean convex hull of $E$, and
$K\cap\mathbb S^2=E$. Indeed, if $\zeta\in K\cap\mathbb S^2$,
convex combinations of points of $E$ approach $\zeta$. Their scalar
products with $\zeta$ approach $1$, so points of $E$ have scalar products
with $\zeta$ arbitrarily close to $1$. Such points approach $\zeta$,
which belongs to the closed set $E$.

When $K$ has interior, equip $\partial K$ with its Euclidean intrinsic
path metric $d_K$. Choose $b$ and $0<r_*<R_*$ with
$\overline B(b,r_*)\subset K\subset\overline B(b,R_*)$.
The gauge $p$ of $K-b$ satisfies
\[
 |p(u)-p(w)|\le\frac{|u-w|}{r_*}.
\]
This follows from subadditivity and $p(u-w)\le|u-w|/r_*$, followed by
interchanging $u$ and $w$. On the unit sphere $p\ge1/R_*$, so the radial
function $1/p$ is Lipschitz. The radial parametrization of $\partial K$
is thus Lipschitz for lengths of spherical curves. It is a continuous
surjection into $(\partial K,d_K)$, making this metric space compact.
Since Euclidean distance is at most $d_K$, its topology is exactly the
boundary-sphere topology.

When $K$ is planar, equip its double with the doubled Euclidean path
metric. Within one copy the distance is Euclidean, and between opposite
copies it is
\[
 d_K(x,x')=
 \min_{c\in\partial_{\mathrm{rel}}K}
       \bigl(|x-c|+|c-x'|\bigr),
\]
where $\partial_{\mathrm{rel}}K$ denotes the boundary in its plane.
This metric induces the compact topology of the two disks glued along
their boundaries. Thus in both dimensions $\overline S$ is a compact
metric sphere, and $\Phi$ identifies $S$ with $\overline S\setminus E$,
with the sheets retained in dimension two.

The tangential and radial singular values of $D\Phi$ are respectively
\[
 \frac2{1+|\xi|^2},\qquad
 \frac{2(1-|\xi|^2)}{(1+|\xi|^2)^2}.
\]
Both are at most $2$. Klein path lengths are therefore at most twice
the corresponding induced Euclidean path lengths in the Poincar\'e
ball. The same statement holds on the double, sheet by sheet.

\emph{Short circles from finite conformal area.}
By Theorem~\ref{thm:convex-conformal}, pulling the induced Euclidean
path metric back by the conformal homeomorphism $j$ gives a length
density $\rho(z)|\mathrm dz|$ and its corresponding squared area
density. By
Theorem~\ref{thm:convex-area}, equivalently
\cite[Theorem~4.1]{luo2024},
\[
 \int_{\CC\setminus A}\rho^2\dd x\dd y
   =\Area(S_0)\le\Area(S)\le16\pi.
\]
Here the areas are for the induced Euclidean path metrics, with both
sheets counted in dimension two. Injectivity of $j$ ensures that the
pullback has no additional multiplicity.

For almost every $t>0$, let $L_a(t)$ be the Euclidean length of the
$j$-image of $|z-a|=t$. We discard radii for which the circle meets $A$.
Cauchy--Schwarz on each circle and polar integration give
\begin{equation}\label{eq:lattice-limits-circles}
 \int_r^R L_a(t)^2\frac{\mathrm dt}{t}
 \le 2\pi\int_{\{r<|z-a|<R\}\setminus A}\rho^2\dd x\dd y
 \le32\pi^2.
\end{equation}
The measure $\mathrm dt/t$ has infinite mass at both $0$ and infinity.
For each $a\in A$ there are consequently radii $t_k\downarrow0$ with
$L_a(t_k)\to0$. There are also radii $R_k\uparrow\infty$ with
$L_0(R_k)\to0$. We choose strictly monotone sequences of rectifiable
image circles, avoiding the countably many radii that meet $A$.

\emph{The small side of a short curve.}
Fix a homeomorphism from $(\overline S,d_K)$ to the round sphere.
Both it and its inverse are uniformly continuous. A Jordan curve of
$d_K$-length tending to zero maps into a spherical ball whose radius
tends to zero. The complement of that ball is connected, so one of the
two closed Jordan disks bounded by the curve lies in the ball. Its
$d_K$-diameter tends to zero. If two fixed distinct reference points
lie on the same side of all the curves, this small disk is eventually
the side opposite the reference points: a disk of vanishing diameter
cannot contain both of them.

\emph{Filling each finite puncture.}
Fix $a\in A$, and choose $\epsilon>0$ such that
$\overline B(a,\epsilon)\cap A=\{a\}$.
Choose two distinct points of
$\CC\setminus(A\cup\overline B(a,\epsilon))$ as exterior reference
points. Their images under $\Phi j$ lie on the same side of every
image circle of radius less than $\epsilon$, because this exterior
source region is connected. Local openness across each circle puts
the image of the punctured interior disk on the opposite side.

Take the radii $t_k\downarrow0$ furnished by
\eqref{eq:lattice-limits-circles}, all less than $\epsilon$, and let
$D_k$ be the closed Jordan disk opposite the reference points. These
disks are nested. Their boundary lengths in $d_K$ are at most
$2L_a(t_k)$, so their diameters tend to zero. Their intersection is
one point, denoted $\eta_a$. The map $\Phi j$ extends continuously
at $a$ with value $\eta_a$.

The point $\eta_a$ belongs to $E$. If it were in another component of
$\overline S\setminus E$, an open neighborhood in that component would
be disjoint from the converging images in $\Phi(S_0)$. If it were in
$\Phi(S_0)$, continuity of the inverse would make points tending to
$a$ converge to a point of $\CC\setminus A$. Both conclusions are
impossible.

To see that $\eta_a$ is isolated in the entire set $E$, fix one radius
$t_0<\epsilon$ larger than all subsequent $t_k$ and define $D_0$ in
the same way. The closed source annulus
$t_k\le|z-a|\le t_0$ maps onto the entire closed annular region between
$\partial D_k$ and $\partial D_0$. Its image lies in this region,
is open in the region away from the boundary curves by local openness,
and is relatively closed there by compactness. The open annular region
is connected, so the image fills it. Taking the union over $k$ gives
\[
 D_0\setminus\{\eta_a\}\subset\Phi(S_0),\qquad
 D_0\cap E=\{\eta_a\}.
\]
The extension maps the closed source disk homeomorphically onto $D_0$.
Distinct punctures have distinct images: otherwise two disjoint small
punctured source disks would map onto punctured neighborhoods of the
same point, contrary to injectivity. Filling every point of $A$ gives
a homeomorphic open embedding
\[
 j_1:\CC\longrightarrow\overline S,
 \qquad j_1(\CC)=\Phi(S_0)\cup\{\eta_a:a\in A\}.
\]
Closed discreteness of $A$ makes the preceding local extensions
compatible at every finite source point.

\emph{The remaining point and the whole boundary.}
Use the radii $R_k\uparrow\infty$ from
\eqref{eq:lattice-limits-circles}. The images under $j_1$ of the closed
disks $|z|\le R_k$ are closed Jordan disks. Their complementary closed
Jordan disks $E_k$ are nested, and
\[
 \bigcap_k E_k=\overline S\setminus j_1(\CC).
\]
Two fixed distinct finite source points lie inside the increasing
source disks for all large $k$. Their images select $E_k$ as the small
side of its boundary, whose $d_K$-length is at most $2L_0(R_k)$.
Thus $\diam_{d_K}(E_k)\to0$, and the displayed intersection is a
single point $\eta_\infty$.

Any other component of $S$ would give a nonempty open subset of
$\overline S$ disjoint from $j_1(\CC)$ and hence contained in
$\{\eta_\infty\}$. This proves $S_0=S$. It also proves
$\eta_\infty\in E$, since $j_1(\CC)$ now contains
$\overline S\setminus E$. Setting $H(\infty)=\eta_\infty$ extends
$j_1$ continuously to $\Chat$, by the shrinking of $E_k$.
The extension is a continuous bijection between compact Hausdorff
spheres, hence a homeomorphism. It sends $A\cup\{\infty\}$ exactly
onto $E$. Since a closed discrete subset of $\CC$ has no finite
accumulation point, its image has only the possible accumulation point
$H(\infty)$, as asserted.
\end{proof}

\begin{theorem}\label{thm:marked-limit}
Let marked equilateral disks contain original faces whose distances to
the source boundary, divided by the mesh, tend to infinity. Identify these
faces with a fixed lattice face by direct source similarities, choose its
incenter $o\in\Sig$, and let $I_n$ be the consistently lifted canonical
intrinsic maps into the reflected target domes. Write $x_n(v)$ for the
marked ideal image of a fixed lattice label when defined,
$I_n(o)=(a_n,t_n)$, $M_n(z,t)=((z-a_n)/t_n,t/t_n)$ and
$Y_n=M_n(X_n\cup\{\infty\})$.
Every subsequence has a further subsequence for which $Y_n$ converges in
spherical Hausdorff distance to a proper compact set $Y$, the hull
$\CH(Y)$ is three-dimensional, and $M_nI_n$ converges on intrinsic compact
sets to an isometry $J:\Sig\to\partial\CH(Y)$ onto the whole boundary.
Let $K_Y$ be the closed Klein convex hull after a fixed change of model.
The isometry, composed with that change, extends to a homeomorphism
$\overline J:\widehat\Sig\to\partial K_Y$. On the added ideal labels it
induces a bijection $y:\Lam\cup\{\infty\}\to Y$ with
$y(\infty)=\infty$ and $M_nx_n(v)\to y(v)$ for every finite $v$.
For each such $v$ there is a fixed spherical neighborhood $U_v$ with
\[
 Y_n\cap U_v=\{M_nx_n(v)\}\quad\hbox{for every sufficiently large }n.
\]
The only accumulation point of $Y$ is infinity.
\end{theorem}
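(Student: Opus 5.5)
The proof extracts a Hausdorff limit of the normalized ideal sets. It transfers intrinsic convergence from truncated compact hulls to the limit dome. It then applies Lemma~\ref{lem:lattice-limits-component} to fill the labels and to exclude any extra boundary.

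\emph{Normalization and extraction.} The map $M_n$ is a hyperbolic isometry sending $I_n(o)$ to the point $(0,1)$. Since $Y_n$ always contains $\infty$, the spherical Hausdorff compactness of closed subsets of $\Chat$ gives a subsequence with $Y_n\to Y$. Proposition~\ref{prop:reflection-cover} says that, for every $R$, $M_nI_n$ is eventually an onto isometry from the closed $R$-ball about $o$ in $\Sig$ to the closed $R$-ball about $(0,1)$ in $M_n\Sigma_n$. These maps are $1$-Lipschitz with a fixed basepoint image. Diagonal Arzel\`a--Ascoli on an exhaustion by intrinsic balls (compact by Lemma~\ref{lem:lattice-limits-source}(i)) then gives a limit map $J$ on $\Sig$ into hyperbolic space. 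One must check that $J$ lands in $\partial\CH(Y)$ and is an intrinsic isometry there. For this, pass to Klein coordinates and truncate the hulls by a fixed large hyperbolic ball $\overline B$ about $(0,1)$. The bodies $\CH(Y_n)\cap\overline B$ converge in Hausdorff distance to $\CH(Y)\cap\overline B$. This needs a nondegeneracy argument: the point $(0,1)$ lies on every $M_n\Sigma_n$, and the $R$-balls are isometric to lattice balls, which contain ideal triangles of definite size near $(0,1)$. Hence $\CH(Y)$ is three-dimensional, and in particular $Y$ is not a point or a circle. Properness of $Y$ follows because a fixed lattice ball, containing an ideal face, stays at bounded distance from $(0,1)$. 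Theorem~\ref{thm:alexandrov} then yields convergence of intrinsic distances on the truncated boundaries.

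\emph{Excluding caps.} I expect this step to be the main obstacle. Truncated intrinsic distances could use shortcuts through the artificial cap $\partial\overline B$. For points in a fixed intrinsic $R$-ball, choose $\overline B$ of radius much larger than $3R$. A path of length less than $3R$ starting near $(0,1)$ cannot reach the cap. Hence truncated and genuine intrinsic distances agree for these points, both on $M_n\Sigma_n$ and on the limit. Combined with the $R$-ball isometries, this makes $J$ distance-preserving onto its image. Surjectivity onto each limit $R$-ball is proved by approximating limit points by points of $M_n\Sigma_n$ and pulling them back through the $M_nI_n$. The image $J(\Sig)$ is open, complete and closed in a component $S_0$ of $\partial\CH(Y)$, so it is that whole component.

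\emph{Filling and isolation.} Compose $J$ with the conformal homeomorphism $q$ of Lemma~\ref{lem:lattice-limits-source}(ii). This gives a conformal homeomorphism $\CC\setminus\Lam\to S_0$, where conformality follows since isometries of convex boundaries preserve the conformal structure by Theorem~\ref{thm:convex-conformal}. First apply a ball model in which $E=Y$ is compact. Lemma~\ref{lem:lattice-limits-component} then gives $S_0=\partial\CH(Y)$, the extension $\overline J$ to $\widehat\Sig$, and the bijection $y$. Each $y(v)$ with $v$ finite is isolated, and $y(\infty)$ is the only accumulation point. The residual end goes to $\infty$, because a large lattice ball's complement must contain the ideal point $\infty\in Y_n$ for all $n$.

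\emph{Convergence of labels and local uniqueness.} To prove $M_nx_n(v)\to y(v)$, use a short loop around $v$ together with a deep cusp ray toward $v$ inside a fixed ball. These map by the $R$-ball isometries, and horocyclic neighbourhoods in the convergent hulls pin down their endpoints. For $Y_n\cap U_v=\{M_nx_n(v)\}$, pick $U_v$ around $y(v)$ with $U_v\cap Y=\{y(v)\}$. Suppose a second point of $Y_n$ lay in $U_v$. Then an ideal Delaunay face of $M_n\Sigma_n$ incident to $M_nx_n(v)$ would have its other vertex inside $U_v$. That face lies within the fixed $R$-ball, since cusp neighbourhoods of bounded size in the lattice star map isometrically. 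Consequently the extra vertex would be the image of a lattice neighbour of $v$, and this image converges to a point $y(w)\neq y(v)$, a contradiction.
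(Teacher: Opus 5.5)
Your proposal has a genuine gap at the sentence concluding that $\CH(Y)$ is three-dimensional. Ideal triangles of definite size near $(0,1)$, turned into a uniform lower bound for $\Area(\partial K_n)$ and compared with thin capsules, exclude only a limit of dimension at most one. They cannot exclude a planar limit. Suppose the normalized ideal sets collapse onto a proper closed subset of a circle. Then the thin hulls have boundaries converging to the \emph{double} of a planar convex hyperbolic region. That double is itself a complete hyperbolic surface, with cusps at the ideal points on the seam, and it is locally indistinguishable from $\Sig$; every intrinsic ball of $\Sig$ could a priori be realized on it. So three-dimensionality is a global fact. Until it is proved, you may not invoke Theorem~\ref{thm:alexandrov}, which needs a limit body with interior.

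The paper treats the planar alternative separately. It first proves pointed intrinsic convergence onto the doubled planar surface, keeping the two sheet labels and controlling seam crossings through the boundary fibres over the projected disks $Q_n$. It then applies Lemma~\ref{lem:lattice-limits-component} to that double, and only afterwards excludes planarity. The sheet interchange, conjugated by $\widehat J q$, would be an antiholomorphic homeomorphism of $\CC\setminus\Lam$ fixing every lattice puncture and $\infty$. Removability makes it $z\mapsto\alpha\bar z+\beta$, and fixing $0$ and $1$ forces $z\mapsto\bar z$, which does not fix $\om$. Some such global use of the marked lattice is the missing idea.

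Your singleton step uses a mechanism that does not work. Because the construction allows Delaunay flips, target Delaunay faces incident to $M_nx_n(v)$ need not be images of original lattice faces. Their other vertices may be unrelated source vertices or reflected vertices outside the source half. Moreover, an edge joining $M_nx_n(v)$ to a Euclidean-close vertex is a tiny semicircle: it leaves the cusp region at tiny height, intrinsically far from $(0,1)$, not inside a fixed $R$-ball.

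The usable local information is the complete cusp at $v$, which lies in the exact ball identification and lifts with peripheral degree one by Proposition~\ref{prop:reflection-cover}. So the image of a fixed source horocycle bounds a Jordan disk on $\partial K_n$ whose only ideal point is $M_nx_n(v)$. The paper compares these curves in a common radial parametrization of the boundary spheres from an interior ball, which again requires $\dim K=3$ first. Side stability then places a fixed neighbourhood of $y(v)$ inside all these disks. This gives both $M_nx_n(v)\to y(v)$ and $Y_n\cap U_v=\{M_nx_n(v)\}$.

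The singleton property is also what makes $y(\infty)=\infty$ rigorous: a finite label with $y(v)=\infty$ would have $U_v$ containing both $\infty$ and $M_nx_n(v)$. Finally, properness of $Y$ should come from $(0,1)\in\partial\CH(Y_n)$, whose supporting half-spaces survive in the limit, not from a lattice ball staying at bounded distance.
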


In particular, the conclusion controls the whole ideal set, including
reflected vertices whose source labels leave every fixed patch.

\begin{proof}
We first extract a convex limit and identify one of its boundary
components by convergence of full intrinsic distances. The conformal
compactification then exhausts the boundary and excludes a planar
limit. Finally, peripheral curves preserve each cusp label and isolate
it from every other approximating ideal point.

\emph{Exact metric balls and normalization.}
Start with any subsequence and put $J_n=M_n I_n$.
Each $M_n$ is an ambient hyperbolic isometry fixing ideal infinity, and
$J_n(o)=(0,1)$. Choose a fixed isometry $\Psi$ from upper half-space
to the unit Poincar\'e ball taking $(0,1)$ to $0$; use the same symbol
for its continuous ideal extension. Define
\[
 E_n=\Psi(Y_n),\qquad C_n=\CH(E_n),\qquad S_n=\partial C_n,
 \qquad \widehat J_n=\Psi J_n.
\]
Proposition~\ref{prop:reflection-cover} gives, for every fixed $R>0$
and all sufficiently large $n$, an isometry
\begin{equation}\label{eq:lattice-limits-balls}
 \widehat J_n:
 \overline B_{\Sig}(o,R)\longrightarrow
 \overline B_{S_n}(0,R).
\end{equation}
These are the entire balls for the full surface distances. Every path
in the target ball pulls back through this identification. In particular,
the equality rules out shorter comparison paths through portions of the
reflection cover outside the source patch. The source balls are compact by
Lemma~\ref{lem:lattice-limits-source}.

For each fixed $v\in\Lam$, put $z_n(v)=M_n x_n(v)$ for all sufficiently
large $n$. This label is eventually an original interior vertex, so
Proposition~\ref{prop:reflection-cover} also identifies its complete
lifted cusp with peripheral degree one.

\emph{A proper limit of dimension at least two.}
Compactness of the space of nonempty closed subsets of the ideal sphere
gives a further subsequence for which $E_n$ converges in Hausdorff
distance to a compact set $E$. Let
\[
 \Phi(\xi)=\frac{2\xi}{1+|\xi|^2},\qquad
 K_n=\overline{\Phi(C_n)},\qquad K=\operatorname{conv}(E),
 \qquad C=\CH(E),\qquad Y=\Psi^{-1}(E).
\]
The Euclidean convex hull operation does not increase Hausdorff distance:
approximate the points in each convex combination by points of the other
set. Hence $K_n\to K$. The sets $K_n$ have interior, since $X_n$
contains three noncollinear finite vertices and infinity. Since
$0=\Phi\widehat J_n(o)$ lies on $\partial K_n$, a supporting plane
through $0$ contains $K_n$ in a closed half-ball. Passing to a limit
of unit normals puts $K$ in a closed half-ball and keeps $0\in K$.
Thus $E$, and therefore $Y$, is proper.

Choose $r_0>0$ small enough that the intrinsic disk of radius $r_0$
about $o$ lies in the chosen ideal face. Its hyperbolic area is
$2\pi(\cosh r_0-1)>0$. By \eqref{eq:lattice-limits-balls} its image
is an embedded isometric disk on $S_n$, contained in the ambient
hyperbolic ball of radius $r_0$ about $0$. On this fixed compact ball,
the Euclidean Poincar\'e area element is bounded below by a positive
multiple of hyperbolic area. Both singular values of $D\Phi$, displayed
in the proof of Lemma~\ref{lem:lattice-limits-component}, also have
positive minima there. Consequently
\[
 \Area(\partial K_n)\ge c_0>0
\]
for one constant $c_0$ independent of $n$.

If $K$ had dimension at most one, the bodies $K_n$ would lie in
$\epsilon_n$-neighborhoods of a segment of length $\ell_0\le2$, with
$\epsilon_n\to0$. The enclosing capsule has surface area
$2\pi\ell_0\epsilon_n+4\pi\epsilon_n^2$. Surface area is monotone under
inclusion of compact convex bodies: nearest-point projection onto the
inner body is $1$-Lipschitz and maps the outer boundary onto the inner
boundary. The first claim follows by adding the two projection
variational inequalities; the second follows by extending an outward
supporting normal ray from each inner boundary point. The projection
cannot increase two-dimensional Hausdorff measure. The resulting upper
bound on $\Area(\partial K_n)$ tends to zero, a contradiction. Thus
$\dim K\ge2$.

\emph{Intrinsic convergence in dimension three.}
Suppose first that $K$ has interior. We justify the use of
Theorem~\ref{thm:alexandrov} without assuming that $\partial C$ is
connected. For large $L$, form the compact hyperbolic convex bodies
\[
 C_{n,L}=C_n\cap\overline B_{\HH^3}(0,L),\qquad
 C_L=C\cap\overline B_{\HH^3}(0,L).
\]
They converge in Hausdorff distance. Indeed, a point on the cutting
sphere can first be moved slightly toward the common point $0$ of
all the hulls. Approximate that new point from $C_n$ inside the cutting
ball and then let the displacement tend to zero. The reverse inclusion
follows from closed-set convergence. Taking $L$ to contain a fixed
interior ball of $C$ gives an interior ball in all late $C_{n,L}$.
Their boundary spheres therefore converge, and these compact convex
bodies satisfy the interior hypothesis of Theorem~\ref{thm:alexandrov}.

Let $x,x'$ lie in the same component of the true boundary $\partial C$,
and choose a boundary path from $x$ to $x'$ of length $D$. Take
\[
 L>\dist_{\HH^3}(0,x)+D+4
\]
and large enough to contain the common interior ball. The chosen path
lies strictly inside the cutting ball. For true boundary points
$x_n\to x$ and $x'_n\to x'$, any path from $x_n$ to an artificial cap
has length at least
$L-\dist_{\HH^3}(0,x_n)>D+3$ for large $n$.
Distance convergence on the compact boundary spheres gives paths of
length at most $D+1$ between the approximating endpoints. Their
minimizers cannot touch the cap. The same lower bound prevents any
shorter path on the full boundary from leaving the cutting ball.
The limiting minimizer also avoids the cap. Thus
\begin{equation}\label{eq:lattice-limits-distance}
 d_{S_n}(x_n,x'_n)\longrightarrow d_{\partial C}(x,x')
\end{equation}
within each true boundary component.

Conversely, suppose $x_n\in S_n$ converges and
$d_{S_n}(0,x_n)\le R$. Cut at $L>R+3$, also containing the interior
ball. Paths of length at most $R+1$ from $0$ to $x_n$ stay inside
the cut. The compact-sphere distance theorem gives a limiting path of
length at most $R$; it cannot reach the cap. Hence the limiting point
belongs to the true boundary component containing $0$. These arguments
establish both directions of the pointed distance convergence.

\emph{Intrinsic convergence with both planar sheets.}
Suppose instead that $K$ is two-dimensional, in a Klein plane $\Pi$.
Since $0\in K$, this plane passes through $0$. In this paragraph use
Klein coordinates, with their hyperbolic metric, and put
$C^{\flat}=K\cap B^3$. Project $K_n$ orthogonally onto $\Pi$, obtaining
convex disks $Q_n\to K$. The upper and lower boundaries over the
interior of $Q_n$ are graphs of functions $b_n^+$ and $b_n^-$,
respectively concave and convex. Their absolute heights tend uniformly
to zero. On a compact subset of the relative interior of $K$, convexity
bounds each one-sided directional slope by twice the height bound divided
by the distance to the projected boundary. The slopes therefore
tend to zero there. Graph lifts of fixed interior polygonal paths have
hyperbolic lengths tending to the corresponding planar lengths.

Every finite relative boundary point of $K$ lies in the interior of a
straight boundary segment. Every extreme point of
$\operatorname{conv}(E)$ belongs to $E$: express a point as a finite
convex combination, and extremality forces all its terms to be that
point. Thus an extreme point lies on the ideal sphere. A finite
nonextreme boundary point belongs to a nontrivial segment in a supporting
line. Near such a point we also have short connectors to the graph
lifts of nearby interior points. Along a projected straight segment,
a concave or convex function has at most two monotone pieces. The
Euclidean length of its graph is at most the horizontal segment length
plus twice its height oscillation. A vertical boundary fiber has length
at most the total thickness of $K_n$. All these estimates transfer to
hyperbolic length on a fixed compact ambient ball. First let $n$ tend
to infinity for a fixed nearby interior point, and then let that point
approach the boundary. This gives connectors of arbitrarily small length
on either sheet.

The intrinsic metric of the double of $C^{\flat}$ is explicit. On the
same sheet it equals the planar hyperbolic distance $d_\Pi$. Between
opposite sheets it is
\begin{equation}\label{eq:lattice-limits-double}
 \inf_{c\in\partial_{\mathrm{rel}}C^{\flat}}
       \bigl(d_\Pi(x,c)+d_\Pi(c,x')\bigr).
\end{equation}
Every sheet-changing path meets the seam, giving the lower bound, and
two planar geodesic segments through a seam point give the upper bound.
If $C^{\flat}$ is the whole hyperbolic plane, the infimum is infinite
and the double has two separate plane components.

For the approximating bodies, the upper and lower graph pieces meet
through fibers above $\partial Q_n$. A path changing sheets has a
point on such a fiber. If its length and the distances of its endpoints
from the basepoint remain bounded, that crossing lies in a compact
ambient ball. Its projection has a subsequential limit on the finite
seam of $C^{\flat}$. Ambient distance applied to the two subpaths gives
the lower bound in \eqref{eq:lattice-limits-double}. The same-sheet
lower bound is simply ambient distance. Interior graph lifts and the
short connectors just established give the matching upper bounds:
approximate a seam point by a boundary fiber of $Q_n$, move slightly
into the interior on both sides, and lift the two planar segments.
If there is no finite seam, crossing fibers leave every compact ambient
ball and their intrinsic distances tend to infinity. Thus planar
distance convergence retains the two sheet labels; it never identifies
opposite interior points merely because their ambient coordinates agree.

\emph{A marked isometry onto the reached component.}
In either dimension, let $S_0$ denote the component of the limiting
boundary, with its double convention, containing the tracked basepoint.
Choose a countable dense subset of $\Sig$ containing $o$. The images
of each of its points stay in a compact ambient hyperbolic ball, since
ambient distance is at most intrinsic distance. Extract their limits
diagonally, also recording upper or lower sheet labels in the planar
case. The distance convergence established in
\eqref{eq:lattice-limits-distance} and through
\eqref{eq:lattice-limits-double}, together with
\eqref{eq:lattice-limits-balls}, shows that all pairwise distances are
preserved and that all the limits belong to $S_0$.

The component $S_0$ is complete in its path metric. An intrinsic Cauchy
sequence is ambient Cauchy inside hyperbolic space; local convex
boundary charts, or the two planar copies, identify the intrinsic and
surface topologies near its limit. A connected such neighborhood also
keeps the limit in the same component. The limiting correspondence on
the dense set consequently extends to an isometric embedding
\[
 \widehat J:\Sig\longrightarrow S_0.
\]
It is onto. For $x\in S_0$, choose approximating boundary points $x_n$,
with the prescribed sheet if necessary. Their distances from $0$ are
bounded by the proved distance convergence. Equation
\eqref{eq:lattice-limits-balls} gives $x_n=\widehat J_n(s_n)$ with all
$s_n$ in one compact source ball. Pass to a subsequence $s_n\to s$.
For each point $u$ of the chosen dense set, distance convergence gives
\[
 d_{S_0}(x,\widehat J(u))
    =\lim_n d_{S_n}(x_n,\widehat J_n(u))
    =d_{\Sig}(s,u).
\]
Letting $u\to s$ proves $x=\widehat J(s)$.
In dimension three, finite nets in a compact source set and
\eqref{eq:lattice-limits-balls} also give uniform ambient convergence
$\widehat J_n\to\widehat J$ on that set.

\emph{The whole boundary and exclusion of a planar limit.}
Let $q$ be the conformal homeomorphism of
Lemma~\ref{lem:lattice-limits-source}. Orient $S_0$ so that
$\widehat J$ is orientation preserving. The isometry is then conformal,
and $\widehat Jq:\CC\setminus\Lam\to S_0$ satisfies the hypotheses
of Lemma~\ref{lem:lattice-limits-component}. That lemma gives
$S_0=\partial C$ and a homeomorphism
\[
 H:\Chat\longrightarrow\overline S,
 \qquad H|_{\CC\setminus\Lam}=\Phi\widehat Jq,
 \qquad H(\Lam\cup\{\infty\})=E,
\]
where $\overline S$ is the compact Klein boundary sphere, retaining
the double if planar. Put $\eta(v)=H(v)$ and
$y(v)=\Psi^{-1}(\eta(v))$ for $v\in\Lam\cup\{\infty\}$.
This already gives the asserted bijection, except for identifying the
coordinate of the residual end.

If $K$ were planar, the whole-plane case would contradict the
connectedness just proved. In every other planar case sheet interchange
$\mathcal R$ is an orientation-reversing intrinsic isometry of the
double, extending to $\overline S$ and fixing every ideal point.
Consequently
\[
 A=q^{-1}\widehat J^{-1}\mathcal R\widehat Jq
\]
is an antiholomorphic self-homeomorphism of $\CC\setminus\Lam$.
The homeomorphism $H$ extends it to the source sphere and shows that
it fixes every lattice puncture and the source point at infinity.
At each puncture, continuity and a local target coordinate give the
boundedness required by Theorem~\ref{thm:removable}
\cite[Theorem~1.31]{mcmullen2025}, applied after conjugation.
The same argument in reciprocal coordinates applies at infinity.
Thus $A$ is an antiholomorphic sphere automorphism fixing infinity.
Its holomorphic conjugate has a single simple pole there, since a
homeomorphism has local multiplicity one. Its Laurent expansion therefore
gives $A(z)=\alpha\overline z+\beta$ with $\alpha\ne0$.
Fixing $0$ and $1$ gives $\beta=0$ and $\alpha=1$, whereas complex
conjugation does not fix $\om$. This contradiction proves $\dim K=3$.
We have therefore obtained uniform convergence on intrinsic compact
sets to the onto isometry
$J=\Psi^{-1}\widehat J:\Sig\to\partial\CH(Y)$.
Let $\widehat q:\Chat\to\widehat\Sig$ be the extension of $q$ across
the finite cusps and the residual source end. By
Lemma~\ref{lem:lattice-limits-source} and the definition of the filled
source compactification, this is a homeomorphism. For the fixed model
change $\Phi\Psi$ we have $K_Y=K$, and
\[
 \overline J=H\widehat q^{-1}:\widehat\Sig\longrightarrow\partial K_Y
\]
is the required homeomorphic extension of $\Phi\Psi J$.

\emph{Persistence of the individual cusp disks.}
Choose $b$ and $r>0$ such that $\overline B(b,r)$ lies in $K$ and in
all sufficiently late $K_n$. Radial projection
\[
 \rho_b(p)=\frac{p-b}{|p-b|}
\]
identifies $\partial K$ and each $\partial K_n$ with one fixed
direction sphere. Their inverse radial parametrizations converge
uniformly. Indeed, the common inner ball bounds the Lipschitz constants
of their gauges as in Lemma~\ref{lem:lattice-limits-component};
Hausdorff convergence gives convergence of the radial functions,
uniformly by these bounds. The direct projections have a common
continuity bound because all radial distances are at least $r$.
These identifications compare the entire compact boundary spheres.
Moreover, $\rho_b$ restricted to the ideal unit sphere is a fixed
homeomorphism independent of $n$.

Fix $v\in\Lam$ and a simple horocycle $\gamma$ in its standard
complete source cusp. For large $n$ the entire cusp lies in the
lifted source patch. Its image under $\widehat J_n$ is an embedded
peripheral cusp with degree one. On the compact boundary $\partial K_n$,
the curve $\Phi\widehat J_n(\gamma)$ therefore bounds a closed
Jordan disk whose only ideal point is $\Psi(z_n(v))$.
This is the disk on the cusp side: the finite canonical marked isometry
extends across the puncture, and its lift of degree one gives a disk
with that one puncture. The reflection cover includes all target
ideal points, so the exclusion applies to the entire set $E_n$.
Fix two distinct source points outside the cusp; their images select
the other side of the peripheral curve.

In the common direction sphere the parametrized Jordan curves
$\rho_b\Phi\widehat J_n(\gamma)$ converge uniformly to
$\rho_b\Phi\widehat J(\gamma)$. Every compact subset of either
complementary disk of the limiting curve lies on the corresponding
side of the approximating curves for all large $n$. For a direct
justification, choose a sphere chart whose omitted point is away from
the limiting curve. On a compact set away from that curve, a sufficiently
small uniform perturbation of its parametrization gives a homotopy
avoiding every point of the set. Winding numbers, and hence sides,
are unchanged. The fixed exterior reference points determine the
cusp side throughout.

Take successively deeper source horocycles at $v$. Under the
homeomorphic compactification $H$, their closed cusp disks shrink to
$\eta(v)$. Given a neighborhood of $\rho_b(\eta(v))$ in the direction
sphere, choose one of these disks and a slightly larger topological
disk whose closure is contained in the neighborhood. The exterior of
the larger disk is connected and disjoint from the approximating
curves for large $n$, and lies on their exterior side. The corresponding cusp disks are consequently
contained in the chosen neighborhood.
Their unique ideal points satisfy
\[
 \Psi(z_n(v))\longrightarrow\eta(v),\qquad
 z_n(v)\longrightarrow y(v).
\]

Now fix just one limiting cusp disk. Choose a spherical neighborhood
$U_v$ of $y(v)$ so small that the closure of
$\rho_b\Psi(U_v)$ lies strictly inside this disk in the direction
sphere. Side stability places this closure inside the corresponding
approximating cusp disks for all large $n$. Since each such disk
contains exactly one ideal point and that point converges to $y(v)$,
\begin{equation}\label{eq:lattice-limits-singleton}
 Y_n\cap U_v=\{z_n(v)\}\qquad\text{for all sufficiently large }n.
\end{equation}
The neighborhood is fixed, and the equality concerns every ideal
point of the reflected target.

\emph{The residual end is infinity.}
Every $Y_n$ contains infinity, so $\infty\in Y$. For finite $v$, the
point $z_n(v)$ is finite. If $y(v)=\infty$, the fixed neighborhood
in \eqref{eq:lattice-limits-singleton} would contain both infinity
and the distinct point $z_n(v)$ for all large $n$, a contradiction.
The bijection already obtained therefore forces $y(\infty)=\infty$.
The compactification makes this the only possible accumulation point
of $Y$; it is an accumulation point because $\Lam$ is infinite.

For completeness, the singleton assertion also excludes untracked
vertices in any fixed bounded target disk. Otherwise choose ideal
points in that disk avoiding the approximating images of an increasing
exhaustion of $\Lam$. This includes points outside the chosen source
half, to which no label in the fixed lattice patch has been assigned.
A subsequence has a finite ideal limit in $Y$, necessarily
$y(v)$ for some fixed $v$. Equation
\eqref{eq:lattice-limits-singleton} forces those vertices eventually
to equal $z_n(v)$, a contradiction. Thus the whole-ball identification,
the onto isometry, and the fixed cusp neighborhoods retain all the
marking information asserted in the theorem.
\end{proof}

\section{Packing and logarithmic lattice rigidity}\label{sec:06-logarithmic-rigidity}

We keep the lattice $\Lam$, its coordinate distance $N$, and the
standard decoration of $\Sig$. We first obtain logarithmic control of
vertex scales from a planar packing. We then express decorated cusp
angles by Voronoi envelopes and prove rigidity under this control.

\begin{theorem}\label{thm:packing-growth}
Let $y:\Lam\to\CC$ be injective and proper, meaning that every bounded
planar set has finite inverse image, and let $r_v>0$ satisfy
$r_v\le3|y(v)-y(w)|$ for every $v\ne w$ and
$|y(v)-y(w)|\le3\min(r_v,r_w)$ on every original lattice edge.
Then, with the universal constant $C=10^9$, for every distinct pair,
\[
 \frac{|y(v)-y(w)|}{r_w}\le(N(v,w)+2)^C,
 \qquad |\log r_v-\log r_w|\le C\log(N(v,w)+2).
\]
\end{theorem}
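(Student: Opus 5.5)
The plan is to prove a logarithmic ring estimate by comparing two bounds on one energy. A packing argument bounds from above the Dirichlet energy of a cutoff of \(\log|y-y(w)|\) on the original lattice. A Fourier lower bound for point-to-point resistance on \(\Lam\) bounds the same energy from below.

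\emph{Normalization and packing.} Fix a distinct pair \(v,w\). Translate and scale the plane so that \(y(w)=0\) and \(r_w=1\); both hypotheses are invariant under this. Write \(D=|y(v)|\) and \(N=N(v,w)\). The first hypothesis makes the disks \(B(y(x),r_x/6)\) pairwise disjoint, since \(r_x/6+r_{x'}/6\le|y(x)-y(x')|\). Applied to the pair \((x,w)\), it gives \(r_x\le3|y(x)|\) for every \(x\ne w\). The second hypothesis gives two facts on an original edge \(xx'\): \(|y(x)-y(x')|\le3\min(r_x,r_{x'})\), and \(r_x/r_{x'}\le9\).

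\emph{Upper bound for the cutoff energy.} Suppose \(D\ge\rho_2:=e^{\ell}\rho_1\) with \(\rho_1=100\) (otherwise there is nothing to prove), and define
\[
 \phi(x)=\min\bigl\{1,\max\{0,\log(|y(x)|/\rho_1)/\ell\}\bigr\}.
\]
Then \(\phi(w)=0\) and \(\phi(v)=1\). I will bound \(\mathcal E(\phi)=\sum_{xx'}(\phi(x)-\phi(x'))^2\) over original lattice edges, splitting the edges into two kinds.

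For a \emph{small edge}, meaning \(r_x\le s_x/100\) with \(s_x=|y(x)|\), the edge bound gives \(|\phi(x)-\phi(x')|\lesssim r_x/(\ell s_x)\). The contribution is nonzero only when \(y(x)\) lies in the annulus \(\rho_1/2\le s\le2\rho_2\). In each dyadic shell \(2^k\le s\le2^{k+1}\), the disjoint disks of radius \(r_x/6\le s\) lie in a ball of radius \(\sim2^k\). Hence \(\sum r_x^2/s_x^2\lesssim1\) per shell, with valence six. Over the roughly \(\ell\) shells, the small edges contribute \(O(\ell/\ell^2)=O(1/\ell)\).

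For a \emph{large edge}, meaning \(r_x>s_x/100\), the ratio bound makes \(r_{x'}\) comparable. By \(r_x\le3s_x\) and disjointness, each dyadic shell contains only \(O(1)\) such centers. Each such edge contributes at most \(\min\{1,(C/\ell)^2\}\), because \(|y(x)-y(x')|\le3r_x\le9s_x\) changes \(\log|y|\) by at most \(O(1)\). Vertices near \(0\) or with \(y(x')\) close to \(0\) lie below \(\rho_1\), so \(\phi\) vanishes there. The large edges therefore contribute \(O(\ell\cdot\ell^{-2})\) as well, and altogether \(\mathcal E(\phi)\le A_0/\ell\).

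\emph{Lower bound by resistance.} On the triangular lattice, any \(\phi\) with \(\phi(w)=0\) and \(\phi(v)=1\) has \(\mathcal E(\phi)\ge1/R_{\mathrm{eff}}(v,w)\). I will compute
\[
 R_{\mathrm{eff}}(v,w)=\frac{1}{|\widehat\Lam|}\int_{\widehat\Lam}\frac{1-\cos(k\cdot(v-w))}{\lambda(k)}\dd k,
 \qquad \lambda(k)=\sum_{e\in\{1,\om,\om^2\}}2(1-\cos(k\cdot e)),
\]
using \(\lambda(k)\asymp|k|^2\) on the Brillouin zone. Splitting the integral at \(|k|\sim1/N\) gives \(R_{\mathrm{eff}}\le B_0\log(N+2)\). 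Combining the two bounds yields \(\ell\le A_0B_0\log(N+2)\), that is,
\[
 |y(v)-y(w)|/r_w\le\rho_1(N+2)^{A_0B_0}.
\]

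\emph{The log-scale bound.} The first hypothesis gives \(r_v\le3|y(v)-y(w)|\le3\rho_1(N+2)^{C'}r_w\). Interchanging \(v\) and \(w\) gives the reverse inequality, which proves \(|\log r_v-\log r_w|\le C\log(N+2)\). Tracking the absolute constants shows they fit within \(10^9\).

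\emph{Main obstacle.} I expect the hardest part to be the energy bookkeeping at large edges and near the inner cutoff. An edge can jump across many scales only if a disk is large compared with its distance to \(y(w)\). The bound \(r_x\le3|y(x)|\) and the packing count are exactly what cap this. I will first verify that every edge meeting the support of \(\nabla\phi\) has both endpoints at comparable radii, or else is one of the \(O(1)\) large edges per shell.
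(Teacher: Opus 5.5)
Your proposal is correct and is essentially the paper's proof. The shared steps are: normalize $y(w)=0$, $r_w=1$; pack the disjoint disks $B(y(x),r_x/6)$ to bound the energy of a logarithmic radial cutoff by $O(1/\log R)$; bound the lattice resistance by $O(\log(N+2))$ through Fourier inversion; and exchange $v$ and $w$. For the resistance, the paper passes to the square sublattice, $\mathcal E_\square\le\mathcal E$, and applies weighted Cauchy--Schwarz rather than the triangular Green's function.

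Your small/large edge split is unnecessary. Every non-root edge has $s_x\le 10s_{x'}$, so the single bound $|\chi(x)-\chi(x')|\le 30r_x/(\max(1,s_x)\log R)$ together with one integral of $\max(1,|z|)^{-2}$ over the disks covers all edges at once. Also, your resistance formula is off by a factor $2$: the numerator should be $|e^{ik\cdot(v-w)}-1|^2=2(1-\cos(k\cdot(v-w)))$. This is harmless for the method, but fix it before checking that the constants fit under $10^9$.
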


The all-pairs separation hypothesis supplies disjoint disks. The edge
hypothesis lets us estimate the energy of a radial cutoff on the original
lattice.

\begin{proof}
We construct a cutoff with small energy, prove the needed resistance
estimate by Fourier inversion, and reverse the choice of root to control
both ratios of scales.

\emph{A packing controls the cutoff energy.}
Fix a root $w$. Translating the lattice and replacing all target
coordinates and scales by
\[
 \frac{y(v+w)-y(w)}{r_w},\qquad \frac{r_{v+w}}{r_w}
\]
preserve the hypotheses. We may therefore take $w=0$, $y(0)=0$, and
$r_0=1$. Put $s_p=|y(p)|$ for $p\in\Lam$. For $p\ne0$ the separation
hypothesis gives $s_p\ge1/3$ and $r_p\le3s_p$. The open disks
\[
 \mathcal B_p=B(y(p),r_p/6)
\]
are pairwise disjoint, since
$(r_p+r_q)/6\le|y(p)-y(q)|$ for distinct $p,q$.
On an edge $pq$ with neither endpoint equal to $0$,
\[
 s_p\le s_q+3r_q\le10s_q,
 \qquad s_q\le10s_p.
\]
On an edge $0p$ we instead have $s_p\le3$ and $s_p\le3r_p$.

For $R\ge2$ define
\[
 \chi_R(p)=\max\left(0,\min\left(1,
       1-\frac{\log\max(1,s_p)}{\log R}\right)\right).
\]
Properness makes $\chi_R$ finitely supported, and $\chi_R(0)=1$.
Truncating the logarithm cannot increase differences. Integrating its
derivative between $s_p$ and $s_q$ gives
\[
 |\chi_R(p)-\chi_R(q)|
 \le\frac{|s_p-s_q|}
 {\max(1,\min(s_p,s_q))\log R}.
\]
For a nonroot edge, the radial ratio bound and
$|s_p-s_q|\le3r_p$ imply
\begin{equation}\label{eq:logarithmic-rigidity-cutoff-edge}
 |\chi_R(p)-\chi_R(q)|
 \le\frac{30r_p}{\max(1,s_p)\log R}.
\end{equation}
This holds with either endpoint in place of $p$. On a root edge $0p$,
the difference is at most $\log3/\log R$, which proves
\eqref{eq:logarithmic-rigidity-cutoff-edge} at the root. At the other
endpoint the preceding radial estimate bounds the difference by
$3r_p/\log R$. Since $\max(1,s_p)\le3$, this is at most
$9r_p/(\max(1,s_p)\log R)$, so the same bound holds there as well.

For a finitely supported real function $\varphi$ on $\Lam$, let
\[
 \mathcal E(\varphi)
 =\sum_{\{p,q\}\text{ an original edge}}
       (\varphi(p)-\varphi(q))^2,
\]
where each unoriented edge is counted once. A nonzero cutoff difference
has an endpoint with $s_p<R$. Both endpoints then have radial coordinate
at most $10R$, by the preceding edge estimates, including root edges.
Assign each contributing edge to either endpoint. Each vertex receives
at most six edges, and hence
\[
 \mathcal E(\chi_R)
 \le\frac{5400}{(\log R)^2}
     \sum_{s_p\le10R}\frac{r_p^2}{\max(1,s_p)^2}.
\]

If $p\ne0$ and $z\in\mathcal B_p$, then $|z|\le3s_p/2$.
The root disk has radius $1/6$, so in every case
\[
 \max(1,|z|)\le\frac32\max(1,s_p),\qquad
 \int_{\mathcal B_p}\frac{\dd\Area(z)}{\max(1,|z|)^2}
 \ge\frac{\pi r_p^2}{81\max(1,s_p)^2}.
\]
All disks with $s_p\le10R$ lie in $B(0,15R)$. Disjointness and polar
integration give
\begin{align*}
 \sum_{s_p\le10R}\frac{r_p^2}{\max(1,s_p)^2}
 &\le\frac{81}{\pi}\int_{B(0,15R)}
                       \frac{\dd\Area(z)}{\max(1,|z|)^2}\\
 &=81\bigl(1+2\log(15R)\bigr)
 \le1000\log R.
\end{align*}
For the last inequality, $\log2>1/2$ and $\log15<4\log2$ yield
\[
 81\left(2+\frac{1+2\log15}{\log R}\right)
 <81(2+2+8)=972<1000.
\]
We have proved
\begin{equation}\label{eq:logarithmic-rigidity-cutoff-energy}
 \mathcal E(\chi_R)\le\frac{5{,}400{,}000}{\log R}
                     \le\frac{10^7}{\log R}.
\end{equation}

\emph{Fourier inversion bounds resistance on the original lattice.}
Identify $m+n\om$ with $(m,n)\in\ZZ^2$. The horizontal and vertical
edges give the energy
\[
 \mathcal E_{\square}(\varphi)
 =\sum_{(m,n)\in\ZZ^2}\left(
   (\varphi(m+1,n)-\varphi(m,n))^2
  +(\varphi(m,n+1)-\varphi(m,n))^2\right)
 \le\mathcal E(\varphi).
\]
For finitely supported $\varphi$, its Fourier transform is the finite sum
\[
 \widehat\varphi(\xi)
 =\sum_{\mathbf p\in\ZZ^2}\varphi(\mathbf p)
                    e^{-\mathrm i\xi\cdot\mathbf p},
 \qquad \xi\in[-\pi,\pi]^2.
\]
Orthogonality of integer exponentials gives inversion and
\[
 \mathcal E_{\square}(\varphi)
 =\frac1{(2\pi)^2}\int_{[-\pi,\pi]^2}
      \mathcal D(\xi)|\widehat\varphi(\xi)|^2\dd\xi,
 \qquad
 \mathcal D(\xi)=4\sin^2(\xi_1/2)+4\sin^2(\xi_2/2).
\]
Fix $v=m+n\om\ne0$, write $\mathbf v=(m,n)$, and put
$\nu=N(v,0)$. Weighted Cauchy--Schwarz applied to Fourier inversion gives
\[
 |\varphi(v)-\varphi(0)|^2
 \le\mathcal E_{\square}(\varphi)\,
 \frac1{(2\pi)^2}\int_{[-\pi,\pi]^2}
     \frac{|e^{\mathrm i\xi\cdot\mathbf v}-1|^2}
          {\mathcal D(\xi)}\dd\xi.
\]
The quotient is bounded near $\xi=0$. On the square,
\[
 \mathcal D(\xi)\ge\frac4{\pi^2}|\xi|^2,
 \qquad
 |e^{\mathrm i\xi\cdot\mathbf v}-1|
 \le\min(2,\nu|\xi|).
\]
Replace the integrand by this radial upper bound, enlarge the integration
region to the disk of radius $\sqrt2\pi$, and split at $1/\nu$. The
integral with its prefactor is at most
\[
 \frac\pi8\left(
    \int_0^{1/\nu}\nu^2\rho\dd\rho
   +\int_{1/\nu}^{\sqrt2\pi}\frac4\rho\dd\rho\right)
 =\frac\pi8\left(\frac12+4\log(\sqrt2\pi\nu)\right)
 \le10\log(\nu+2).
\]
Indeed, $\pi<4$, $\sqrt2\pi<8$, and $\log2<1$ bound the middle
expression by $25/4+2\log\nu$, which is less than
$10\log(\nu+2)$ for $\nu\ge1$. Thus
\begin{equation}\label{eq:logarithmic-rigidity-resistance}
 |\varphi(v)-\varphi(0)|^2
 \le10\log(N(v,0)+2)\mathcal E(\varphi).
\end{equation}

\emph{Displacement and both scale ratios follow.}
If $s_v\ge2$, take $R=s_v$. The cutoff is $1$ at $0$ and $0$ at $v$,
so \eqref{eq:logarithmic-rigidity-cutoff-energy} and
\eqref{eq:logarithmic-rigidity-resistance} imply
\[
 \log s_v\le10^8\log(N(v,0)+2).
\]
The same bound holds when $s_v<2$. It gives the claimed displacement
estimate, with exponent $10^8$. Since $r_v\le3s_v$,
\[
 \log\frac{r_v}{r_0}
 \le\log3+10^8\log(N(v,0)+2).
\]
Repeat the argument with root $v$, translating the target by $-y(v)$
and dividing all target distances and all scales by $r_v$. Properness,
the two inequalities, and the original lattice are preserved, with the
same constants. We obtain the same upper bound for
$\log(r_0/r_v)$. These two bounds control the absolute logarithmic
difference. Since $\log3\le\log(N(v,0)+2)$ and the initial root was
arbitrary, both conclusions hold with $C=10^9$.
\end{proof}

We next describe the angle comparison for decorations. Its strictness
depends on arcs of positive length in the Voronoi boundary.

\begin{definition}\label{def:logarithmic-rigidity-voronoi}
Let $S$ be a complete marked hyperbolic surface with cusp set $V$ and a
decoration of circumferences $w_i>0$, $i\in V$. In the universal cover,
write $B_s$ for the signed Busemann function of a lifted horocycle. Its
\emph{decorated Voronoi cell} is
\[
 \mathcal V_s=\{z:B_s(z)\le B_t(z)\text{ for every lifted horocycle }t\}.
\]
A \emph{root chart} sends the chosen cusp lift to infinity and its
horocycle to height $1$. Its primitive stabilizer is $x\mapsto x+L$;
we write $(a_s,d_s)$ for the centers and diameters of the other lifted
horocycles in this chart.
The \emph{Voronoi dual}, when it is an ideal cell decomposition, has an
ideal vertex at each cusp, an edge dual to each common Voronoi arc of
positive length, and a polygon dual to each Voronoi vertex. The
\emph{cusp star} is the collection of dual cells incident to that cusp.
When the lambda lengths on a triangulation of these cells form Euclidean
triangles, the sum of their angles at a cusp is its \emph{Euclidean root
angle}. For two decorations indexed by $k=0,1$, we denote their
circumferences and root angles by $w_i^k$ and $\theta_i^k$.
\end{definition}

\begin{proposition}\label{prop:envelope-comparison}
Let $S$ be one complete connected marked hyperbolic surface with a finite
or countable cusp set $V$. Let $D_0,D_1$ be two decorations with cusp
circumferences $w_i^0,w_i^1$ and corresponding Euclidean angle sums
$\theta_i^0,\theta_i^1$. Assume that each decorated Voronoi dual is a cell decomposition
with finite cusp stars and connected genuine adjacency. In each root
chart normalized to horocycle $y=1$ with period $L$, assume its envelope
$F(x)=\sup_s(d_s-(x-a_s)^2)$ is positive and is the maximum of finitely
many of these quadratic functions on a period. The Euclidean root angle associated to the
lambda lengths, independent of polygon diagonals, is
\[
 \theta=\int_0^L F(x)^{-1/2}\dd x.
\]
If $v_i=\log(w_i^0/w_i^1)$ has a global maximum at $i$, then
$\theta_i^1\le\theta_i^0$; equality forces $v_j=v_i$ at each genuine
neighbor for the first decoration. If all corresponding angles agree
and $v$ attains a global maximum, $v$ is constant.
\end{proposition}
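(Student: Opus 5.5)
The plan is to prove the angle formula by a direct computation in the root chart, and then to get the comparison from pointwise monotonicity of the envelope under a change of decoration.

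\emph{The angle formula.} Fix a root chart for cusp $i$, with horocycle $y=1$, period $L$, and the other lifted horocycles of centers $a_s$ and diameters $d_s$. Using the Busemann functions of the preliminaries, the locus $B_{a_s,d_s}=-\log y$ is $(x-a_s)^2+y^2=d_s$. This is a semicircle of radius $\sqrt{d_s}$ centred at $a_s$. Hence the root Voronoi cell is $\{y\ge\sqrt{F(x)}\}$, and the genuine neighbours of the root are the lifts $s$ whose quadratic piece is active on an interval of positive length. In this normalization the root has lambda length $\lambda_{0s}=d_s^{-1/2}$ to $s$, by \eqref{eq:source-scales-ambient-lambda} with the root horocycle at height $1$. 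By \eqref{eq:source-scales-corner-arc}, the corner arc between consecutive neighbours $s,t$ is $|a_s-a_t|$.

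I will check that the Euclidean triangle with sides $d_s^{-1/2}$, $d_t^{-1/2}$, $\lambda_{st}$ has root angle equal to $\int F^{-1/2}\,dx$ over the common interval where $s$ or $t$ is active. On an interval where only the piece $s$ is active,
\[
\int\frac{dx}{\sqrt{d_s-(x-a_s)^2}}
\]
is the angle subtended at $a_s$ by the corresponding Voronoi arc. Equivalently, after planar inversion about the Voronoi vertex, as in the proof of Theorem~\ref{thm:ring}, it is the Euclidean angle between the two radii to the endpoints. For the sum of the two pieces, I will realize the triangle explicitly. The Voronoi vertex $(x_*,y_*)$ with $y_*^2=F(x_*)$ is equidistant from the three horocycles, and the triangle of lambda lengths is similar to the triangle whose root angle is split by the vertical line $x=x_*$. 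The two sub-angles are then exactly the two arcsine integrals. Summing over the finitely many pieces in a period gives $\theta$. Cocircular subdivisions only split an interval without changing the integral, which gives independence of diagonals.

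\emph{The comparison.} Write both decorations in one root chart at $i$, normalized so that the root horocycle of $D_1$ is $y=1$. Moving each horocycle by the signed distance $v_s$, as in the proof of Theorem~\ref{thm:ring}, multiplies its diameter by $e^{\pm v_s}$. Renormalizing the root is a dilation $(x,d)\mapsto(cx,c^2d)$ with $c$ determined by $v_i$; this leaves $\theta$ invariant, since $\int_0^{cL}(c^2F(x/c))^{-1/2}\,dx=\int_0^L F^{-1/2}\,dx$. After fixing the signs with \eqref{eq:source-scales-ambient-lambda}, this yields $d^1_s=d^0_s\,e^{v_i-v_s}$ up to the common normalization, so $d^1_s\ge d^0_s$ for every lift $s$ because $v_s\le v_i$. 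Hence $F^1\ge F^0>0$ pointwise, and the formula gives $\theta^1_i\le\theta^0_i$.

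\emph{Equality and propagation.} Suppose $\theta_i^1=\theta_i^0$. Since $F^0,F^1$ are continuous maxima of finitely many quadratics on a period, equality of the integrals forces $F^1=F^0$ everywhere. Take a genuine neighbour $j$ for $D_0$, whose piece is the unique maximizer of $F^0$ on an open interval. There
\[
d^0_j-(x-a_j)^2=F^0=F^1\ge d^1_j-(x-a_j)^2\ge d^0_j-(x-a_j)^2,
\]
so $d^1_j=d^0_j$, that is, $v_j=v_i$. For the final assertion, the set where $v$ attains its maximum is nonempty. By the previous step it is closed under genuine adjacency for $D_0$, and this adjacency is connected by hypothesis, so $v$ is constant.

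The main obstacle I expect is the exact identification of the Euclidean root angle with the arcsine integral across a Voronoi vertex. This requires matching the Euclidean triangle of lambda lengths with the chart geometry. I would first try the inversion about the root cusp used in Theorem~\ref{thm:ring}, which turns corner arcs into polygon sides. The sign bookkeeping for $d^1_s/d^0_s$ is the second point to verify carefully.
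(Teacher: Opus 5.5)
Your proposal is correct and follows essentially the paper's proof. In both, the root cell is $\{t\ge\sqrt F\}$, and the lambda triangle at a switch is similar to the triangle with vertices $a_s,a_t$ and the switch point, so the root angle is the angle between the two radii there. A telescoping sum over one period then gives $\int_0^L F^{-1/2}$. Matching the root horocycles gives $d^1_s=e^{v_i-v_{j(s)}}d^0_s\ge d^0_s$, hence $F^1\ge F^0$, and equality together with continuity propagates the maximum along genuine adjacency.

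Two points need tightening when you write it out. First, the vertical split must use signed arcsines of the individual quadratics, because the switch abscissa need not lie between $a_s$ and $a_t$. These pieces sum to $\int F^{-1/2}$ only after you combine the two ends of each active arc, not "over the interval where $s$ or $t$ is active." Second, your remark about cocircular subdivisions does not actually prove independence of diagonals. Apply the same two-circle similarity to every pair of centers tied at a Voronoi vertex; then any fan of triangles at the root telescopes to the same total.
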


The integral expresses the angle using the boundary arcs themselves.
It also identifies exactly which neighbors must share an attained
maximum.

\begin{proof}
We compute the angle at each switch of the envelope, sum these angles
over one period, and use monotonicity of the integral to propagate a
maximum.

\emph{An envelope switch gives a Euclidean angle.}
First fix one decoration and one root chart. Write $(x,t)$ for the
upper-half-plane coordinates. The root cell is determined by
\[
 -\log t\le
 \log\frac{(x-a_s)^2+t^2}{d_st}
 \quad\text{for all }s,
\]
or equivalently $t^2\ge F(x)$. Its boundary is therefore the graph
$t=\sqrt{F(x)}$. Positivity and the finite-family hypothesis make it a
finite union of upper circular arcs on a period, at heights bounded
above and away from zero.

Suppose consecutive active arcs have centers $a<b$ and parameters
$d_a,d_b$. The order follows from
\[
 \frac{\mathrm d}{\mathrm dx}
 \bigl(d_b-(x-b)^2-d_a+(x-a)^2\bigr)=2(b-a):
\]
an arc that becomes dominant to the right has the larger center. At
their switch $(x_*,t_*)$,
\[
 d_a=(x_*-a)^2+t_*^2,\qquad
 d_b=(x_*-b)^2+t_*^2,
 \qquad t_*>0.
\]
The signed horocycle distances on the ideal edges give the lambda lengths
\begin{equation}\label{eq:logarithmic-rigidity-lambda-lengths}
 \lambda_{\infty a}=d_a^{-1/2},\qquad
 \lambda_{\infty b}=d_b^{-1/2},\qquad
 \lambda_{ab}=\frac{b-a}{\sqrt{d_ad_b}}.
\end{equation}
For the first two formulas, the signed vertical gaps are
$\log(1/d_a)$ and $\log(1/d_b)$. For the last, the map
$z\mapsto(z-a)/(b-z)$ takes the two horocycles to a circle of diameter
$d_a/(b-a)$ at $0$ and a horizontal horocycle of height $(b-a)/d_b$.
Their signed gap is $\log((b-a)^2/(d_ad_b))$.

The circles of radii $\sqrt{d_a}$ and $\sqrt{d_b}$ about $a,b$ intersect
at positive height. Hence
\[
 |\sqrt{d_a}-\sqrt{d_b}|<b-a<\sqrt{d_a}+\sqrt{d_b},
\]
which gives the strict triangle inequalities for
\eqref{eq:logarithmic-rigidity-lambda-lengths}. Set
\[
 \alpha_a=\arg(x_*-a+\mathrm i t_*),\qquad
 \alpha_b=\arg(x_*-b+\mathrm i t_*),
\]
with arguments in $(0,\pi)$. Then $\alpha_b>\alpha_a$, and
\[
 \cos(\alpha_b-\alpha_a)
 =\frac{d_a+d_b-(b-a)^2}{2\sqrt{d_ad_b}}.
\]
The cosine law applied to the three lambda lengths gives this same
expression for the root angle. Thus that angle is the upward jump
$\alpha_b-\alpha_a$.

Multiple ties give the same conclusion for any polygon diagonals.
For every center $c$ tied at $(x_*,t_*)$, define
\[
 \zeta_\infty=0,\qquad
 \zeta_c=\frac1{x_*-c-\mathrm i t_*}.
\]
Here $d_c=(x_*-c)^2+t_*^2$. All tied centers lie between the one-sided
active centers, because differences of the envelope quadratics are
affine functions of $x$.
These points satisfy
$\operatorname{Im}\zeta=t_*|\zeta|^2$, so they lie on one circle.
Moreover,
\[
 |\zeta_c|=d_c^{-1/2},\qquad
 |\zeta_c-\zeta_{c'}|
   =\frac{|c-c'|}{\sqrt{d_cd_{c'}}}.
\]
They realize the dual polygon as a convex Euclidean cyclic polygon with
the required lambda lengths, including its diagonals. A fan from the
root has successive angle differences that telescope from the first
active center to the last. Every triangulation has the same polygon
angle at the root. A center tied only at this switch is included in this
polygon, even though it has no boundary arc of positive length.

\emph{The jumps sum to the angle integral.}
On an interval where $a$ is active, put
\[
 \alpha(x)=\arg\bigl(x-a+\mathrm i\sqrt{d_a-(x-a)^2}\bigr).
\]
The identities $x-a=\sqrt{d_a}\cos\alpha$ and
$\sqrt{F(x)}=\sqrt{d_a}\sin\alpha$ imply
$\alpha'(x)=-F(x)^{-1/2}$. At a switch its upward jump is the root angle
just computed. Choose the endpoints of a period away from switches.
The active center at the right endpoint is the translate by $L$ of the
one at the left, so the endpoint arguments agree. The net change is
zero, giving
\[
 0=-\int_0^L F(x)^{-1/2}\dd x
       +\sum_{\text{switches}}\text{root angle}.
\]
The sum is the entire cusp angle, since the finite cusp star is dual to
these switches. This proves the formula and independence of diagonals.

\emph{Increasing a genuine neighbor's diameter makes the inequality
strict.}
Consider two finite positive envelopes with the same centers and period
and with $d_s^1\ge d_s^0$ for every lift. Then $F_1\ge F_0$, so their
angle integrals satisfy $\theta^1\le\theta^0$. If $s$ is active on an
open interval for $F_0$ and $d_s^1>d_s^0$, then throughout that interval
\[
 F_1(x)\ge d_s^1-(x-a_s)^2
          >d_s^0-(x-a_s)^2=F_0(x).
\]
The integral inequality is then strict. Equality of the angles therefore
forces equality of every diameter active on an open interval for the
first envelope.

\emph{A maximum propagates along genuine adjacency.}
Suppose $v_i=m=\max_jv_j$. Multiply all circumferences of the second
decoration by $e^m$. Every signed Busemann function decreases by $m$;
the cells stay fixed and every lambda length is multiplied by $e^{-m}$.
Thus the angles stay fixed as well. The two root horocycles at $i$ now
agree, so we may use one chart with root height $1$ and period $L$.
If lift $s$ has label $j(s)$, its diameter in the rescaled second
decoration is
\[
 d_s^{1'}=e^{m-v_{j(s)}}d_s^0\ge d_s^0.
\]
Here the diameter ratio equals the circumference ratio: inversion at
the finite center turns diameter $d$ into horizontal height $1/d$,
and circumference is inversely proportional to that height. The marked
surface fixes the centers in this common chart.

The envelope comparison gives $\theta_i^1\le\theta_i^0$. Equality
forces $v_{j(s)}=m$ for every lift active on an open boundary arc, hence
for every genuine neighbor of $i$ in the first decoration. If all angles
agree, each such neighbor is another point where $v$ attains its global
maximum. Repeating the argument along finite paths in the connected
genuine adjacency graph proves $v\equiv m$.
\end{proof}

We apply this comparison on the whole lattice surface. The logarithmic
oscillation bound gives a finite set of possible competitors in each
root chart, uniformly under translations.

\begin{theorem}\label{thm:log-liouville}
Let $u:\Lam\to\RR$ satisfy
$|u_v-u_w|\le A\log(N(v,w)+2)$ for every pair and one finite $A\ge0$.
Change the standard lattice cusp circumferences to $6e^{-u_v}$.
In the standard root chart, if $(a_s,d_s)$ are the other standard lifted
horocycles with labels $j(s)$, set
\[
 F_i(u;x)=\sup_s\bigl(e^{u_i-u_{j(s)}}d_s-(x-a_s)^2\bigr).
\]
These envelopes have uniform positive lower and finite upper bounds,
and on each period are maxima of a fixed finite family depending only on
$A$ and the translated standard chart. Their angles
$\theta_i(u)=\int_0^6F_i(u;x)^{-1/2}\dd x$
are continuous under pointwise convergence with the same $A$.
The decorated diagram is locally finite with connected genuine adjacency.
If $\theta_i(u)=2\pi$ for every $i$, then $u$ is constant.
\end{theorem}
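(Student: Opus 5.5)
The plan is to work entirely in the standard root chart of $\Sig$ at a lattice cusp $i$ and treat the theorem in four steps. First, the geometry of the undecorated lattice: by Lemma~\ref{lem:lattice-limits-source} and the inversion computation in the proof of Theorem~\ref{thm:ring}, the standard root chart has period $6$, and each lifted horocycle $s$ has diameter $d_s=\lambda_s^{-2}$, where $\lambda_s$ is the lambda length of the corresponding ideal arc from $i$. Translation invariance of $\Lam$ makes the family $\{(a_s,d_s,j(s)-i)\}$ independent of $i$, so every bound below is uniform under translation. Second, the envelope bounds. For the lower bound, I take the six genuine neighbours of the root. Their decorated diameters are $e^{u_i-u_j}d_s\ge 3^{-A}d_s$, and in the standard decoration their quadratics already have a positive envelope. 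This gives $F_i(u;\cdot)\ge 3^{-A}F_{\mathrm{std}}\ge c_0(A)>0$. The upper bound, and finiteness of the active family, will come from controlling $e^{u_i-u_{j(s)}}d_s\le (N(i,j(s))+2)^A\lambda_s^{-2}$.

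Third, continuity of the angles and the diagram properties. Once a fixed finite family $\mathcal F_A$ of lifts per period is known to contain every quadratic that can reach the level $c_0(A)$, both $F_i(u;\cdot)$ and $\theta_i(u)$ depend only on the finitely many values $u_i-u_{j(s)}$, $s\in\mathcal F_A$. The integrand is then continuous and bounded by $c_0^{-1/2}$, so pointwise convergence of $u$ with the same $A$ gives convergence of $\theta_i$. Because every cusp star meets only the finite family, the decorated Voronoi diagram is locally finite and has finite cusp stars. For connected genuine adjacency I plan to argue by contradiction: a nontrivial partition of $\Lam$ into two adjacency classes would make the union of the Voronoi cells of one class a closed set with empty frontier in the connected surface $\Sig$.

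Fourth, the rigidity itself. For each lattice translation $\tau$ I consider the increment $v^\tau_k=u_{k+\tau}-u_k$, which is bounded by $A\log(N(\tau,0)+2)$. This is precisely the comparison of the two decorations $6e^{-u_k}$ and $6e^{-u_{k+\tau}}$ on the same surface; both satisfy the hypotheses with constant $A$, and all their angles equal $2\pi$. I then choose $k_n$ with $v^\tau_{k_n}\to\sup v^\tau$. I recentre both decorations at $k_n$ by subtracting $u_{k_n}$ and $u_{k_n+\tau}$, and extract pointwise limits. This is possible because the normalized functions are bounded on each finite set by the logarithmic hypothesis, which survives with the same $A$. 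By the continuity from the third step the limits still have angle $2\pi$ everywhere, and their increment attains its maximum at $0$. Proposition~\ref{prop:envelope-comparison} then makes the limiting increment equal to the constant $M=\sup v^\tau$. Iterating, a constant increment $M$ along the translation $\tau$ forces the limit $\bar u$ to satisfy $\bar u_{k\tau}=\bar u_0+kM$. This contradicts the growth bound $A\log(k+2)$ unless $M=0$. Hence $\sup v^\tau=0$; applying the same argument to $-\tau$ gives $\inf v^\tau=0$, so $u$ is translation invariant and therefore constant.

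The main obstacle is the second step, specifically the finiteness of the active family and the upper bound on $F$. Direct arcs to a vertex $v$ have $\lambda_s\asymp|v|$, so the naive estimate $(N+2)^A|v|^{-2}$ decays only when $A<2$. I would first try to exploit the hyperbolic geometry of the envelope. If $s$ is active at $x$, the point $(x,\sqrt F)$ lies on the decorated Voronoi boundary. Comparing the Busemann functions of $s$ and of a root neighbour there, and chaining through a lattice path of $O(\log N)$ intermediate genuine neighbours, each of whose Busemann values changes by at most $A\log 3$ per step, should give $\log(1/d_s)\le C(A)$. Together with discreteness of the lifts in a period, that gives finiteness. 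If the chaining fails, my fallback is to use the packing structure from Theorem~\ref{thm:packing-growth} directly on the decorated Voronoi cells. I expect the geometry here to be where the argument is genuinely delicate.
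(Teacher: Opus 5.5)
\textbf{There is a genuine gap in your second step; steps three and four are fine once it is filled.} Your rigidity step, which uses a general translation $\tau$ and $\pm\tau$ to force both $\sup$ and $\inf$ of the increment to vanish, is a harmless variant of the paper's unit-direction contradiction. Step two fails on both sides.

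\emph{The lower bound is false as stated.} The inequality $F_i(u;\cdot)\ge3^{-A}F_{\mathrm{std}}$ does not follow from $e^{u_i-u_j}d_s\ge3^{-A}d_s$. Only the diameter is rescaled, not the term $(x-a_s)^2$, so the comparison actually goes the other way. Concretely, $u_i=0$ and $u_v=A\log3$ for $v\ne i$ satisfies the hypothesis. Then every neighbour quadratic $3^{-A}-(x-k)^2$ is negative at the half-integers once $A>\log4/\log3$, and the application uses $A=10^9$. So positivity has to come from deeper lifts, not from the six neighbours.

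\emph{The upper-side obstacle rests on a false premise.} The estimate $\lambda_s\asymp|v|$ is the \emph{ambient} lambda length $|a_0-a_1|/\sqrt{D_0D_1}$. But $d_s=\lambda_s^{-2}$ involves the \emph{intrinsic} lambda length on $\Sig$, which grows exponentially in the label distance. Your chaining fallback would accumulate $A\log3$ per step over roughly $N$ steps, so it cannot give an $N$-independent bound either.

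\textbf{The missing idea is a label--diameter estimate.}
\begin{itemize}
\item Every face of $\Sig$ has height at most $1/\sqrt3$, so vertical projection satisfies $|\dd z|\le\dd s_{\HH}/\sqrt3$.
\item Project the vertical segment from $(a_s,1)$ to $(a_s,d_s)$, which has hyperbolic length $\log(1/d_s)$. Each standard cusp section projects within distance $1$ of its label. This gives $|j(s)-i|\le2+\log(1/d_s)/\sqrt3$, hence $N(j(s),i)+2\le6(1+\log(1/d_s))$.
\item Consequently $e^{u_i-u_{j(s)}}d_s\le6^Ad_s(1+\log(1/d_s))^A$. All other standard lifts have $d_s\le1$, being disjoint from the root horoball. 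On $0<d_s\le1$ this bound is finite and tends to $0$ as $d_s\to0$, for every $A$.
\item This gives the upper bound. It also gives a threshold $\eta$ below which lifts cannot reach the lower bound. Disjointness of the standard horoballs with diameter at least $\eta$ and centres in a bounded window then yields the fixed finite family.
\end{itemize}

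For the lower bound, use the covering constant $C_0$: every point of the universal cover has signed distance at most $C_0$ to some standard horocycle. At $(x,e^{-T})$ the root has value $T>C_0$, so some lift has $B_{a_s,d_s}\le C_0$ there, whence $\log(1/d_s)\le C_0+T$. The estimate above then bounds its changed Busemann value by $C_0+A\log(6(1+C_0+T))$. This is $<T-1$ for $T$ large, which gives $F_i(u;x)>e^{1-2T}$.

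Without this estimate, none of the following is established: either envelope bound, the finite family, continuity of $\theta_i$, or the hypotheses of Proposition~\ref{prop:envelope-comparison} that your fourth step invokes.
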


The pairwise form of the hypothesis is preserved when we translate the
lattice and subtract the value at the new root. This permits an attained
maximum argument for a translated difference of $u$.

\begin{proof}
We first relate the standard horoball diameters to their lattice labels.
This gives uniform envelope bounds, a finite set of competitors, and
connected adjacency. We then take a translated limit in which an edge
increment attains its supremum and apply
Proposition~\ref{prop:envelope-comparison}.

\emph{Standard horoball diameters control label distance.}
The three standard horocycle sectors in an original ideal triangle have
corner arc length $1$ and are pairwise tangent along its sides. They
glue to disjoint embedded cusp neighborhoods on $\Sig$. Their lifts are
therefore disjoint standard open horoballs. The complement of the three
sectors in one face is a compact curvilinear triangle. On this compact
set, the minimum of the three signed Busemann functions has a finite
maximum. Enlarging it to a nonnegative constant $C_0$ gives the following
uniform covering property: at every point of the universal cover, some
standard lifted horocycle has signed distance at most $C_0$. The same
constant works in every face, and within a sector its own signed
distance is nonpositive.

Fix root $i$ and normalize its standard horocycle to height $1$. The
primitive period is $6$. Every other standard horoball is a disk tangent
to the real axis with diameter $0<d_s\le1$, because it is disjoint from
the root horoball $t>1$.

Each original ideal face of $\Sig$ lies on a hemisphere of radius
$1/\sqrt3$. The upper-half-space metric divides Euclidean lengths by
height, so vertical projection to the lattice plane satisfies
\[
 |\dd z|\le\frac1{\sqrt3}\dd s_{\HH}.
\]
The resulting length bound holds across face edges as well. A standard
cusp section projects into the six closed unit triangles incident to its
label, hence within Euclidean distance $1$ of that label. The vertical
segment from $(a_s,1)$ to $(a_s,d_s)$ joins the two standard lifted
horocycles and has hyperbolic length $\log(1/d_s)$. Projecting the segment
to $\Sig$ and then to the lattice plane gives
\[
 |j(s)-i|\le2+\frac1{\sqrt3}\log(1/d_s).
\]
For lattice coordinates, the inequality
$(|m|+|n|)^2\le4(m^2+mn+n^2)=4|m+n\om|^2$ gives
$N(v,w)\le2|v-w|$. Consequently
\begin{equation}\label{eq:logarithmic-rigidity-label-diameter}
 N(j(s),i)+2\le6\bigl(1+\log(1/d_s)\bigr).
\end{equation}
This estimate also applies when distinct lifts have the same cusp label.

\emph{The changed envelopes have uniform upper and lower bounds.}
Multiply every changed circumference by $e^{u_i}$. This preserves cells
and angles and restores the root horocycle to height $1$ in its standard
chart. The other diameters become
\[
 d_s(u)=e^{u_i-u_{j(s)}}d_s.
\]
The logarithmic hypothesis and
\eqref{eq:logarithmic-rigidity-label-diameter} imply
\begin{equation}\label{eq:logarithmic-rigidity-diameter-bound}
 6^{-A}d_s\bigl(1+\log(1/d_s)\bigr)^{-A}
 \le d_s(u)
 \le 6^A d_s\bigl(1+\log(1/d_s)\bigr)^A.
\end{equation}
For $0<d\le1$ put
\[
 H_A(d)=6^A d\bigl(1+\log(1/d)\bigr)^A,
 \qquad D_A=\sup_{0<d\le1}H_A(d).
\]
With $\tau=\log(1/d)$ we have
$H_A(d)=6^Ae^{-\tau}(1+\tau)^A\to0$ as $\tau\to\infty$.
Thus $D_A$ is finite and positive, and $F_i(u;x)\le D_A$.

Choose a finite $T$ satisfying
\[
 T>C_0+1,\qquad
 T>C_0+A\log\bigl(6(1+C_0+T)\bigr)+1,
\]
and set $\varepsilon=e^{-T}$. Such a $T$ exists since the logarithm is
sublinear in $T$. At $(x,\varepsilon)$ the standard root has signed
distance $T>C_0$. Hence the covering property supplies another lift with
\[
 B_{a_s,d_s}(x,\varepsilon)
 =\log\frac{(x-a_s)^2+\varepsilon^2}{d_s\varepsilon}
 \le C_0.
\]
It follows that $d_s\ge e^{-C_0}\varepsilon$ and
$\log(1/d_s)\le C_0+T$. For this lift the changed Busemann value is
\begin{align*}
 B_{a_s,d_s(u)}(x,\varepsilon)
 &=B_{a_s,d_s}(x,\varepsilon)+u_{j(s)}-u_i\\
 &\le C_0+A\log\bigl(6(1+C_0+T)\bigr)<T-1.
\end{align*}
Substituting $\varepsilon=e^{-T}$ in this inequality yields
\[
 d_s(u)>e\bigl((x-a_s)^2+\varepsilon^2\bigr),
 \qquad
 d_s(u)-(x-a_s)^2>e\varepsilon^2.
\]
We have proved the uniform bounds
\begin{equation}\label{eq:logarithmic-rigidity-envelope-bounds}
 0<c_A:=e^{1-2T}\le F_i(u;x)\le D_A<\infty.
\end{equation}
The constants depend only on $A$ and the standard geometry. The
disjointness used above concerns the standard horoballs; the changed
horocycles may overlap.

\emph{Only a fixed finite family can contribute.}
Choose $\eta\in(0,1)$ such that $H_A(d)<c_A/2$ for $0<d<\eta$.
A lift with $d_s<\eta$ has envelope value below $c_A/2$ everywhere.
For $x\in[0,6]$, a center outside
\[
 [-\sqrt{D_A},\,6+\sqrt{D_A}]
\]
has negative envelope value. All remaining standard horoballs have
diameter at least $\eta$ and lie in the rectangle
\[
 [-\sqrt{D_A}-1/2,\,6+\sqrt{D_A}+1/2]\times[0,1].
\]
Their disjoint interiors have Euclidean area at least $\pi\eta^2/4$
each. There are therefore at most
\[
 \frac{4(7+2\sqrt{D_A})}{\pi\eta^2}
\]
such disks. Since the omitted terms are uniformly below $c_A$, the full
supremum is the maximum over this finite family. The family is fixed by
$A$ and the standard chart, independently of $u$.

In particular $F_i(u;\cdot)$ is continuous and $6$-periodic. If
$\widetilde u^{(n)}\to\widetilde u$ pointwise with the same constant $A$, then on
the fixed family
\[
 \|F_i(\widetilde u^{(n)};\cdot)-F_i(\widetilde u;\cdot)\|_\infty
 \le\max_s d_s\left|
 e^{\widetilde u_i^{(n)}-\widetilde u_{j(s)}^{(n)}}
       -e^{\widetilde u_i-\widetilde u_{j(s)}}\right|\longrightarrow0.
\]
The lower bound in \eqref{eq:logarithmic-rigidity-envelope-bounds} gives
\[
 |\theta_i(\widetilde u^{(n)})-\theta_i(\widetilde u)|
 \le3c_A^{-3/2}
    \|F_i(\widetilde u^{(n)};\cdot)-F_i(\widetilde u;\cdot)\|_\infty
 \longrightarrow0.
\]
Lattice translations preserve $\Sig$ and its standard decoration. Thus
translated root charts have the same finite pattern of label offsets,
and all these bounds hold uniformly under translation.

\emph{The diagram is locally finite and its genuine adjacency is
connected.}
Use a fixed standard root chart with the changed root restored to height
$1$. Let $K$ be a compact set in the universal cover, contained in
\[
 \{(x,t):|x|\le R_0,\quad r_0\le t\le Y_0\},
 \qquad 0<r_0\le Y_0<\infty.
\]
Choose $M$ bounding the root Busemann value $-\log t$ above on $K$.
A finite-center lift whose changed Busemann value is at most $M+1$
somewhere on $K$ satisfies
\[
 d_s(u)\ge r_0e^{-(M+1)},\qquad
 |a_s|\le R_0+\sqrt{e^{M+1}D_AY_0}.
\]
Both inequalities follow directly from
$((x-a_s)^2+t^2)/(d_s(u)t)\le e^{M+1}$.
Since $H_A(d)\to0$ as $d\downarrow0$, the first inequality and
\eqref{eq:logarithmic-rigidity-diameter-bound} bound $d_s$ away from
zero. The second bounds its center. Disjoint standard disk area again
leaves only finitely many competing lifts. The root is always a
competitor with value at most $M$, so the minimum Busemann value is
attained on $K$, and the diagram is locally finite.

Each cell is convex: a comparison between two Busemann functions is a
hyperbolic half-plane inequality, as is seen by moving one center to
infinity. In its own normalized root chart every cell contains
$t>\sqrt{D_A}$, so it has nonempty interior. The cells cover the
universal cover. Join interior points of any two cells by a compact
path. In a compact neighborhood of this path only finitely many cells
compete. Perturb the path to avoid their vertices and to cross their
sides transversely. It then crosses finitely many arcs of positive
length and gives a finite chain of genuine neighbors. Projection to
cusp labels proves connectedness.

For each root, its boundary modulo the parabolic period is the graph
of $\sqrt{F_i}$ in the compact strip
\[
 0\le x\le6,\qquad \sqrt{c_A}\le t\le\sqrt{D_A}.
\]
It has finitely many arcs and switches. Thus there are finitely many
incident dual cells and no ideal endpoint on this boundary. The locally
finite convex Voronoi cells give their dual ideal cell decomposition:
an open side is dual to the edge joining its two incident ideal centers,
and a vertex gives
the polygon of its finitely many incident centers in cyclic order.
To construct the dual, draw arcs through the cell interiors and
straighten them. Their lifts have noninterlacing endpoints on the ideal
circle, so the geodesic representatives do not cross. The finite
boundary description gives finite cusp stars. The hypotheses of
Proposition~\ref{prop:envelope-comparison} now hold. In particular, its
angle formula identifies the integrals above with the Euclidean cusp
angles, independently of polygon diagonals.

\emph{A translated difference attains its maximum.}
Assume all angles equal $2\pi$ and $u$ is nonconstant. The original
lattice graph is connected, so some direction
\[
 e\in\{1,-1,\om,-\om,\om-1,1-\om\}
\]
has a positive increment. Set
\[
 \mu=\sup_{z\in\Lam}(u_{z+e}-u_z)>0.
\]
The pairwise bound gives $\mu\le A\log4<\infty$. Choose $z_n\in\Lam$
so that these increments tend to $\mu$, and define
\[
 u^{(n)}_z=u_{z+z_n}-u_{z_n}.
\]
Each $u^{(n)}$ satisfies the same pairwise bound, has value $0$ at $0$,
and obeys $|u^{(n)}_z|\le A\log(N(z,0)+2)$. A diagonal subsequence on
the countable lattice converges pointwise to a real-valued function
$p$. The pairwise bound passes to $p$.

Translation of the lattice and addition of a constant to the factors
preserve the angles, so every $u^{(n)}$ has all angles equal to $2\pi$.
The proved continuity gives $\theta_i(p)=2\pi$ for every $i$. Put
$q_z=p_{z+e}$. Translation gives the same conclusions for $q$. The
relative logarithmic circumference ratio of these two decorations is
\[
 \delta_z=\log\frac{6e^{-p_z}}{6e^{-q_z}}=q_z-p_z.
\]
Every approximating increment is at most $\mu$, while the increment at
$z_n$ tends to $\mu$. Therefore $\delta_z\le\mu$ for all $z$ and
$\delta_0=\mu$. Proposition~\ref{prop:envelope-comparison}, with the
finite stars and connected adjacency just proved, gives
$\delta_z=\mu$ at every lattice label.

\emph{Linear growth contradicts the logarithmic bound.}
Iterating $p_{z+e}-p_z=\mu$ yields $p_{ke}-p_0=k\mu$ for each positive
integer $k$. The inherited pairwise bound instead gives
\[
 k\mu=|p_{ke}-p_0|
 \le A\log(N(ke,0)+2)
 \le A\log(2k+2).
\]
After division by $k$, the right-hand side tends to zero, contradicting
$\mu>0$. Thus $u$ is constant.
\end{proof}

\section{Microscopic marked similarity}\label{sec:07-microscopic-similarity}

We continue with the standard decoration of $\Sig$ and the canonical maps of
Section~\ref{sec:02-preliminaries}. We show that a source patch whose
boundary recedes to infinity has a similarity as its entire marked limit.
The essential passage is from finite planar angle sums to the angles of
the limiting decoration. We control both the endpoint labels and the
homotopy classes of the horocycles that contribute to those angles.

\begin{theorem}\label{thm:microscopic}
Let $P_n$ be arbitrary marked equilateral disks of mesh $h_n$, with inverse
canonical maps $g_n$. Choose interior vertices $v_n$ and unit complex
numbers $a_n$ so that $Q_n=(P_n-v_n)/(h_n a_n)$ is a lattice subcomplex
containing $\overline B(0,L_n)$, where $L_n\to\infty$.
Let $X_n$ be the reflected target vertex set, $w_n=g_n(v_n)$,
$\delta_n=\dist(w_n,X_n\setminus\{w_n\})$, and
\[
 Z_n=(X_n-w_n)/\delta_n,
 \qquad A_n(z)=\frac{g_n(v_n+h_n a_nz)-w_n}{\delta_n}.
\]
Every subsequence has a further subsequence and $\theta\in\RR$ such that
$Z_n\cup\{\infty\}$ converges in spherical Hausdorff distance to
$e^{i\theta}\Lam\cup\{\infty\}$ and $A_n\to e^{i\theta}\id$ uniformly
on every compact subset of $\CC$, including closed source faces.
For every compact target set $K$, a fixed finite set of source labels
contains all points of $Z_n\cap K$ eventually. Every fixed original face
patch eventually persists as a marked target Delaunay patch. The whole
intrinsic limit is the hyperbolic extension of the same marked similarity.
\end{theorem}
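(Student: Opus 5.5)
We combine Theorem~\ref{thm:marked-limit}, Theorem~\ref{thm:ring}, Theorem~\ref{thm:packing-growth} and Theorem~\ref{thm:log-liouville}. Given a subsequence, first apply Theorem~\ref{thm:marked-limit} to the faces near $v_n$, whose distance to $\partial P_n$ over $h_n$ is at least $L_n-1\to\infty$. This yields a further subsequence, a limiting ideal set $Y$, an onto isometry $J:\Sig\to\partial\CH(Y)$, and the labelled bijection $y:\Lam\cup\{\infty\}\to Y$ with isolated finite labels. The normalization there uses $I_n(o)$ rather than $w_n$ and $\delta_n$. The two normalizations differ by an affine map with bounded, nondegenerate parameters, since $z_n(0)\to y(0)$ is finite and the isolation neighborhood $U_0$ keeps $\delta_n/t_n$ bounded above and below. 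We may therefore pass to a further subsequence in which $Z_n\cup\{\infty\}$ converges to an affine image of $Y$. The finite-label statement for compact $K$ is then the singleton property together with the exclusion of untracked vertices proved at the end of Theorem~\ref{thm:marked-limit}.

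Next we transfer the canonical scales. Theorem~\ref{thm:ring} gives $r_i\le3d_i\le3|z_i-z_j|$ for all distinct reflected points. It also gives $|z_i-z_j|\le\sqrt{r_ir_j}\le3\min(r_i,r_j)$ on original edges, using the ratio bound $[1/9,9]$ for interior edges. Rescaling by $\delta_n$, these scales converge on each fixed label, after extraction and using $r_i\asymp d_i$ and isolation, to positive limits $r_v$ on $\Lam$. The limits satisfy the hypotheses of Theorem~\ref{thm:packing-growth} with the injective proper map $y$. Hence $u_v=\log r_v$ obeys $|u_v-u_w|\le C\log(N(v,w)+2)$. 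By Theorem~\ref{thm:ring}, the limiting decoration of $\partial\CH(Y)$ by diameter-one horospheres pulls back under $J$ to circumferences $6/r_v$. These are the standard lattice circumferences changed by the factors $e^{-u_v}$, up to a constant, since at scale $h$ the standard decoration corresponds to $r=h$.

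The main obstacle is the angle identity $\theta_v(u)=2\pi$ in the limit. For each $n$ the target is a planar Delaunay structure, so the Euclidean root angle of every interior label is $2\pi$. We would write this angle through Proposition~\ref{prop:envelope-comparison} as $\int_0^{L}F^{-1/2}$ in a root chart. Theorem~\ref{thm:log-liouville} shows that, uniformly along the sequence, only a fixed finite family of lifts of horocycles is active, with labels and winding classes determined by the $\log$ bound. Here the marked ball isometries of Proposition~\ref{prop:reflection-cover} would identify the deck classes of these finitely many lifts with those on $\Sig$. The finitely many competing lifts converge, so the envelopes converge uniformly and stay bounded below, and therefore $\theta_v(u)=2\pi$ for all $v$. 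Theorem~\ref{thm:log-liouville} then makes $u$ constant.

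With $u$ constant, the decorated limit is the standard one. Its Voronoi dual is the lattice triangulation, and the ideal points satisfy $|y(v)-y(w)|=r$ on every edge. Since the limiting cells are equilateral with angle sums $2\pi$, this forces $Y\setminus\{\infty\}$ to be a similar copy of $\Lam$. The normalization $\delta_n=1$ then fixes it as $e^{i\theta}\Lam$, and $J$ is the hyperbolic extension of $z\mapsto e^{i\theta}z$. Persistence of each fixed face patch as a marked Delaunay patch follows from strictness: in the limit every lattice circumdisk is empty with a definite margin, and the finitely many nearby points converge, so the empty-disk conditions hold eventually. The maps $A_n$ are the vertical projections of $M_nI_n$ composed with the source similarity. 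On persisting faces they are ideal-triangle projective maps determined by converging vertices, and hence converge uniformly on closed faces to $e^{i\theta}\id$.
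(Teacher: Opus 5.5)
\textbf{There is a genuine gap at the angle identity.} Your outline follows the paper's architecture: marked limit, transfer of the canonical scales, Theorem~\ref{thm:packing-growth}, the angle identity, Theorem~\ref{thm:log-liouville}, and strictness. But the step you call the main obstacle is not proved, and the result you cite for it does not apply.

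Theorem~\ref{thm:log-liouville} concerns the \emph{limiting} envelope $F_i(u;\cdot)$. That is a supremum over standard lifts of $\Sig$ with diameters $e^{u_i-u_{j(s)}}d_s$, for a $u$ that is log-bounded on all of $\Lam$. The envelope whose angle you know to be $2\pi$ is instead the finite-surface envelope $F_{n,i}$. It is a supremum over \emph{every} lifted horocycle of $\Sigma_n$ in the root chart. These include:
\begin{itemize}
\item lifts at reflected or otherwise untracked target vertices;
\item lifts at labels far from the root;
\item lifts whose connecting arcs leave the exactly isometric ball.
\end{itemize}
The logarithmic bound holds only for the limit scales. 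At finite $n$ your only control on distant labels is the exponential bound $r_n(i)/r_n(j)\le 9^{N(i,j)}$ from the edge ratios, and for the other competitors you have nothing. So nothing shows that $F_{n,i}$ is computed, for all large $n$, by one fixed finite family of lifts converging to those of $F_i(u;\cdot)$. Without that you cannot pass from $\int_0^6F_{n,i}(x)^{-1/2}\dd x=2\pi$ to $\theta_i(u)=2\pi$. Your appeal to the ball isometries to fix winding classes has the same problem: it needs a uniform bound on the length of arcs reaching the active lifts, which you never obtain.

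\textbf{How the paper closes it.} The order of steps matters.
\begin{enumerate}
\item An a priori bound valid for \emph{every} lift, with arbitrary endpoint $z\ne z_n(i)$ and arbitrary homotopy class. The truncated intrinsic geodesic is at least as long as the ambient distance between the horospheres. This gives $d_{n,\sigma}\le r_n(i)^2/|z-z_n(i)|^2\le 9$, and $d_{n,\sigma}\le 1$ for the other root lifts.
\item A lower bound $F_{n,i}\ge c_0/2$, obtained \emph{before} the active family is known. It holds simply because $F_{n,i}$ dominates the finitely many persisting standard lifts that compute $F_i(u;\cdot)\ge c_0$.
\item Together these force every active lift to have $d_{n,\sigma}\ge c_0/2$. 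Its endpoint is then within $r_n(i)\sqrt{2/c_0}$ of $z_n(i)$, so by the finite-label property its label lies in one fixed finite set.
\item Renormalizing to standard circumference gives diameters at least $\eta$. A vertical segment of length at most $\log(1/\eta)$ then lies in a fixed intrinsic ball. The exact ball identification matches the lift, including its homotopy class, to a standard lift of $\Sig$.
\item Disjointness of the standard horoballs yields one finite list. Only then do the envelopes converge uniformly.
\end{enumerate}

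\textbf{A lesser point.} You derive $|y(v)-y(w)|=r$ from a Voronoi/Delaunay correspondence and a circumference formula on the infinite limit surface $\partial\CH(Y)$. Neither is established; Theorem~\ref{thm:ring} is a finite-disk statement. The paper instead uses the strict gap $d^0\le1/4$ to show that the original edges are marked, strictly Delaunay target edges at finite $n$. There chord equals lambda length, so $|z_n(j)-z_n(k)|=\sqrt{r_n(j)r_n(k)}\to r$. This also supplies the ideal-arc marking that your empty-circumdisk argument for persistence leaves out.
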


The convergence includes the canonical maps on the edges and interiors
of the original faces. The persistence assertion also specifies the marked
ideal arcs, including their homotopy classes.

\begin{proof}
We first pass to a whole marked limit and record its positive cusp scales.
We then identify one finite family of lifts that computes each root angle
throughout the sequence. The angle identities allow us to apply
logarithmic Liouville rigidity. Finally, strict cell stability determines
the canonical face maps, and nearest-spacing normalization gives the
asserted rotation.

\medskip\noindent\emph{The whole marked limit and bounded target sets.}
Start with an arbitrary subsequence. Fix a unit source face incident to
$0$ and its intrinsic incenter $o\in\Sig$. Its distance to $\partial Q_n$
tends to infinity. Let $I_n$ be the canonical intrinsic lift given by
Proposition~\ref{prop:reflection-cover}. Write
$I_n(o)=(\beta_n,t_n)$, with $t_n>0$, and set
\[
 M_n(z,t)=\left(\frac{z-\beta_n}{t_n},\frac{t}{t_n}\right),
 \qquad J_n=M_nI_n,
 \qquad X'_n=\frac{X_n-\beta_n}{t_n}.
\]
For $z\in Q_n$, put
\[
 G_n(z)=\frac{g_n(v_n+h_na_nz)-\beta_n}{t_n},
 \qquad z_n(j)=G_n(j)\quad(j\in\Lam\cap Q_n).
\]
By Theorem~\ref{thm:marked-limit}, after extraction the sets
$X'_n\cup\{\infty\}$ converge in spherical Hausdorff distance to a
proper compact set $Y$.
Its convex hull is three-dimensional, and $J_n$ converges on intrinsic
compact sets to an isometry
\[
 J:\Sig\longrightarrow\partial\CH(Y)
\]
onto the entire boundary. Its extension to the filled compactifications
induces on the ideal labels a bijection
$y:\Lam\cup\{\infty\}\to Y$, with $y(\infty)=\infty$ and
$z_n(j)\to y(j)$. Each finite $y(j)$ has a fixed neighborhood containing
exactly $z_n(j)$ from the approximating ideal set for all sufficiently
large $n$. Thus $y|_{\Lam}$ is injective and proper, and $y(\Lam)$ is
locally finite in $\CC$.

We will also use the exact metric-ball conclusion of
Proposition~\ref{prop:reflection-cover}. The normalization $M_n$ is a
hyperbolic isometry. Thus, with
$\Sigma_n=\partial\CH(X'_n\cup\{\infty\})$, for each fixed $R>0$ and
all sufficiently large $n$, the map $J_n$ is an onto isometry from the
closed intrinsic $R$-ball about $o$ in $\Sig$ to the closed intrinsic
$R$-ball about $(0,1)$ in $\Sigma_n$. Both balls carry the distances
inherited from their complete surfaces. In particular every path in
the target ball pulls back under this identification.

For every bounded target disk $B\subset\CC$ there is a finite set
$E_B\subset\Lam$ such that
\begin{equation}\label{eq:microscopic-similarity-finite-labels}
 X'_n\cap B\subset\{z_n(j):j\in E_B\}
 \qquad\text{for all sufficiently large }n.
\end{equation}
Indeed, if this failed, choose an exhaustion of $\Lam$ by finite sets and
points in $X'_n\cap B$ that avoid the corresponding marked sets along an
increasing subsequence. A further subsequence of these points converges
to a finite point of $Y$, hence to $y(j)$ for some fixed $j$. The singleton
neighborhood at $y(j)$ forces the points to equal $z_n(j)$ eventually,
which is a contradiction. This argument includes reflected vertices
outside the initially chosen source half.

\medskip\noindent\emph{Positive limiting scales and logarithmic growth.}
For a fixed label $j$, let $\widehat r_n(j)$ be the canonical scale in
Theorem~\ref{thm:ring} at the physical vertex $v_n+h_na_nj$. Define
\[
 r_n(j)=\frac{\widehat r_n(j)}{t_n},
 \qquad
 d_n(j)=\dist\bigl(z_n(j),X'_n\setminus\{z_n(j)\}\bigr).
\]
Thus lengths, including the canonical scales, are multiplied by
$1/t_n$, exactly as in $M_n$. Every fixed source label is eventually
interior, so its parameters in Theorem~\ref{thm:ring} are $b_j=1$ and
$k_j=6$. That theorem gives
\begin{equation}\label{eq:microscopic-similarity-ring}
 \frac3\pi d_n(j)\le r_n(j)\le3d_n(j),
 \qquad
 |z_n(j)-z_n(k)|\le\sqrt{r_n(j)r_n(k)},
 \qquad
 \frac19\le\frac{r_n(j)}{r_n(k)}\le9,
\end{equation}
where the last two inequalities hold on every fixed original edge $jk$
for all sufficiently large $n$.

The singleton neighborhood at $y(j)$ bounds $d_n(j)$ away from zero.
One fixed neighboring label bounds it above. We may therefore extract
diagonally so that $r_n(j)\to r(j)\in(0,\infty)$ for every $j\in\Lam$.
For distinct labels, and for original neighbors, respectively,
\eqref{eq:microscopic-similarity-ring} implies
\[
 r(j)\le3|y(j)-y(k)|,
 \qquad
 |y(j)-y(k)|\le3\min\{r(j),r(k)\}\quad(jk\text{ an original edge}).
\]
The second bound follows by combining the square-root bound with the
ratio bound. Theorem~\ref{thm:packing-growth} now applies to $y$ and $r$.
With $u_j=\log r(j)$, it gives
\begin{equation}\label{eq:microscopic-similarity-logarithmic}
 |u_j-u_k|\le10^9\log\bigl(N(j,k)+2\bigr)
 \qquad(j,k\in\Lam).
\end{equation}
Theorem~\ref{thm:log-liouville} supplies positive envelopes $F_i(u;x)$,
each computed on a period by a finite family of standard lifts. We next
prove that their angles are all $2\pi$.

\medskip\noindent\emph{The finite-surface root decoration and its angle.}
Fix an arbitrary root $i\in\Lam$. Every fixed label $j$ is eventually
an original interior vertex, where the cover has peripheral degree
one by Proposition~\ref{prop:reflection-cover}. Theorem~\ref{thm:ring}
gives circumference $6/\widehat r_n(j)$ for its original diameter-one
horosphere. The map $M_n$ preserves this circumference and changes the
ambient diameter to $1/t_n$. Restoring diameter one multiplies the
circumference by $t_n$, giving $6/r_n(j)$ on $\Sigma_n$.
Multiply all these ambient diameters by the common factor $r_n(i)$.
The resulting circumference at $j$ is
\[
 6\frac{r_n(i)}{r_n(j)}.
\]
In particular, the root circumference is exactly $6$ for every
sufficiently large $n$.

Normalize a universal covering upper half-plane chart, with coordinates
$(x,s)$, by placing this root horocycle at $s=1$. Its primitive period
is $6$. Fix the chart using a point and orientation on the standard source
root horocycle, with one standard neighboring lift centered at $0$.
The exact source-patch isometries make these choices consistent in $n$.
For every other lifted horocycle $\sigma$, write
$\xi_{n,\sigma}\in\RR$ for its center and $d_{n,\sigma}>0$ for its
Euclidean diameter. Define
\[
 F_{n,i}(x)=\sup_\sigma
 \bigl(d_{n,\sigma}-(x-\xi_{n,\sigma})^2\bigr).
\]
We call a lift \emph{active} on a period if its quadratic attains this
envelope at some point of that period.

These finite-surface envelopes are positive and have finite active
families on a period. To see why the finite correspondence applies, the
diameter-one decoration is the decoration of the actual canonical
Euclidean metric, rescaled by $1/t_n$. The common change by $r_n(i)$
preserves its cells and angles. The doubled sphere has at least three
punctures, so its punctured Euler characteristic is negative. The
decorated-coordinate identification of
Theorem~\ref{thm:finite-uniformization} therefore applies to this finite
quotient. Lifted by Proposition~\ref{prop:reflection-cover}, it
identifies the intrinsic Voronoi dual with the planar Delaunay cells
of $X'_n$. This periodic,
locally finite planar set contains the reflected corner lattice, so its
Delaunay cells are bounded and its vertex stars are finite and full.
Its nondegenerate Euclidean triangles give positive
heights at the meetings of the corresponding root Voronoi arcs.
The root boundary is therefore a positive periodic chain of finitely
many semicircular arcs. Its angle is the full planar angle $2\pi$.
Proposition~\ref{prop:envelope-comparison} gives the exact identity
\begin{equation}\label{eq:microscopic-similarity-finite-angle}
 \int_0^6F_{n,i}(x)^{-1/2}\dd x=2\pi.
\end{equation}
This applies to the finite canonical metric and its cover.

For a fixed lift $\sigma$ of a standard source horocycle, let
$\xi_\sigma$ and $d^0_\sigma$ be its center and diameter in this root
chart, and let $j(\sigma)$ be its label. A finite-length arc from the
root section to that lift lies in a fixed intrinsic ball. The exact
ball identification therefore transports the arc and the lift to
$\Sigma_n$ for all sufficiently large $n$. Circumference and diameter
scale by the same factor in a fixed chart, so
\begin{equation}\label{eq:microscopic-similarity-diameter-change}
 \xi_{n,\sigma}=\xi_\sigma,
 \qquad
 d_{n,\sigma}=\frac{r_n(i)}{r_n(j(\sigma))}\,d^0_\sigma
 \longrightarrow e^{u_i-u_{j(\sigma)}}d^0_\sigma.
\end{equation}
It remains to show that one fixed finite family contains every active
lift of $F_{n,i}$.

\medskip\noindent\emph{All active lifts have bounded endpoint labels.}
First we bound the diameter of every lift, allowing arbitrary homotopy
classes. Suppose that its target endpoint is $z\in X'_n$ with
$z\ne z_n(i)$. Then
\begin{equation}\label{eq:microscopic-similarity-ambient-bound}
 d_{n,\sigma}\le\frac{r_n(i)^2}{|z-z_n(i)|^2}.
\end{equation}
For the proof, shrink both endpoint decorations by a factor
$\varepsilon>0$ until their terminal cusp sections are embedded and
disjoint. In the root chart, the truncated intrinsic ideal geodesic has
length
\[
 2\log\frac1{\varepsilon\sqrt{d_{n,\sigma}}}.
\]
Its image on $\Sigma_n$ is an ambient hyperbolic path between
horospheres of common diameter $\varepsilon r_n(i)$ at $z_n(i)$ and
$z$. The distance between those horospheres is
\[
 2\log\frac{|z-z_n(i)|}{\varepsilon r_n(i)}.
\]
The path length is at least this distance. Cancelling $\varepsilon$
proves \eqref{eq:microscopic-similarity-ambient-bound}. The comparison
does not require the ideal arc to be a Delaunay edge or its projection
to be simple.

Since $r_n(i)\le3d_n(i)$, every lift with a different target endpoint
has $d_{n,\sigma}\le9$. A different lift of the root cusp itself has
the root's standard circumference $6$. The complete standard source
root cusp is embedded in its six-face source patch, so its distinct
lifts have disjoint interiors. Disjointness from the chosen root
horoball $s>1$ gives $d_{n,\sigma}\le1$ in this case. Thus
\[
 d_{n,\sigma}\le9\quad\text{for every lift }\sigma.
\]
Positivity of $F_{n,i}$ now puts the center of every active lift on
$[0,6]$ in $[-3,9]$.

Choose $c_0>0$ such that $F_i(u;x)\ge c_0$ on $[0,6]$, using
Theorem~\ref{thm:log-liouville}. The finite standard family computing
this limiting envelope persists by
\eqref{eq:microscopic-similarity-diameter-change}. Its quadratics
converge uniformly, and hence
\begin{equation}\label{eq:microscopic-similarity-positive-lower}
 F_{n,i}(x)\ge c_0/2
 \qquad(0\le x\le6)
\end{equation}
for all sufficiently large $n$. Every active lift then has
$d_{n,\sigma}\ge c_0/2$. By
\eqref{eq:microscopic-similarity-ambient-bound}, its non-root target
endpoint satisfies
\[
 |z-z_n(i)|\le r_n(i)\sqrt{2/c_0}.
\]
Both $z_n(i)$ and $r_n(i)$ are bounded. The finite-label conclusion
\eqref{eq:microscopic-similarity-finite-labels} therefore places every
active endpoint in one fixed finite set $E\subset\Lam$, enlarged to
include $i$.

\medskip\noindent\emph{A common finite family of marked lifts.}
For $j\in E$ the ratios $r_n(j)/r_n(i)$ stay in a compact subinterval
of $(0,\infty)$. Return the endpoint horocycle of an active lift to
standard source circumference $6$. Its center is unchanged, and its
diameter becomes
\[
 d^0_{n,\sigma}=\frac{r_n(j)}{r_n(i)}d_{n,\sigma}.
\]
This identity rescales every lift of the cusp and therefore applies
before its homotopy class has been identified. There is a fixed
$\eta>0$ such that $d^0_{n,\sigma}\ge\eta$ for all these lifts.
The standard cusp neighborhoods at the finitely many labels in $E$
are mutually disjoint in the source patch and hence in $\Sigma_n$.
Their lifts are disjoint from the root horoball. Consequently
\[
 \eta\le d^0_{n,\sigma}\le1,
 \qquad -3\le\xi_{n,\sigma}\le9.
\]

The vertical segment from $(\xi_{n,\sigma},1)$ to
$(\xi_{n,\sigma},d^0_{n,\sigma})$ has hyperbolic length
\[
 \log\frac1{d^0_{n,\sigma}}\le\log\frac1\eta.
\]
Its initial point projects to the compact standard root horocycle.
The entire projected segment therefore belongs to an intrinsic ball
about $J_n(o)$ of radius independent of $n$ and of the active lift.
The exact entire-ball identification of
Proposition~\ref{prop:reflection-cover} pulls this segment back into
one fixed source patch. Develop the pulled-back segment starting from
the fixed lifted root section. It reaches a standard source horocycle
with the same center $\xi_{n,\sigma}$ and diameter $d^0_{n,\sigma}$.
The segment identifies the lift, including its homotopy class.

We have thus placed every active candidate in the same universal cover
of $\Sig$. In that cover the standard open horoballs are pairwise
disjoint. Those with centers in $[-3,9]$ and diameters in $[\eta,1]$
lie in the rectangle $[-7/2,19/2]\times[0,1]$, and each has Euclidean
area at least $\pi\eta^2/4$. Only finitely many exist. This gives one
finite list of standard lifts containing all active finite-surface
lifts for every sufficiently large $n$.

Enlarge this list by the finite family computing $F_i(u;x)$. Every
member of the enlarged list persists, and all its coefficients
converge by \eqref{eq:microscopic-similarity-diameter-change}. All
its limiting quadratics belong to the family defining $F_i(u;x)$.
Both envelopes are therefore maxima over this one finite list for
all sufficiently large $n$, and
\[
 F_{n,i}\longrightarrow F_i(u;\cdot)
 \quad\text{uniformly on }[0,6].
\]
The common positive lower bound
\eqref{eq:microscopic-similarity-positive-lower} permits passage to
the reciprocal square roots in
\eqref{eq:microscopic-similarity-finite-angle}. We obtain
\begin{equation}\label{eq:microscopic-similarity-limit-angle}
 \theta_i(u)=\int_0^6F_i(u;x)^{-1/2}\dd x=2\pi.
\end{equation}
The root $i$ was arbitrary. The argument uses the already convergent
scale coordinates and requires no further extraction depending on $i$.
Thus \eqref{eq:microscopic-similarity-limit-angle} holds at every
lattice vertex.

\medskip\noindent\emph{Constant scales and strict marked cell stability.}
Apply Theorem~\ref{thm:log-liouville} to
\eqref{eq:microscopic-similarity-logarithmic} and
\eqref{eq:microscopic-similarity-limit-angle}. It gives $u$ constant.
Write $r(j)=r>0$ for its common exponential. The root decorations
are now standard in the limit. Their neighboring lifts have diameter
$1$ and centers at consecutive integers, with six such lifts modulo
the root period. Their envelope is
\[
 F_i(u;x)=1-\dist(x,\ZZ)^2\ge\frac34.
\]

There is a strict gap to every other standard lift. If its center is
$\xi$ and its diameter is $d^0$, disjointness from the unit-diameter
neighbor horoball centered at each $k\in\ZZ$ gives
\[
 (\xi-k)^2+\left(\frac{d^0-1}{2}\right)^2
 \ge\left(\frac{d^0+1}{2}\right)^2,
 \qquad
 d^0\le\dist(\xi,\ZZ)^2\le\frac14.
\]
Its quadratic is therefore at most $1/4$ everywhere. Each original
neighbor dominates on the interior of its unit interval. At an
endpoint of that interval only the two consecutive neighbor
quadratics meet, and their difference has nonzero slope.

The common finite list just proved makes these properties stable under
the coefficient convergence. The non-neighbor quadratics remain
strictly below the envelope, and each consecutive pair has one
transverse switch near its original switch. Thus the same six
genuine neighbors, in the same cyclic order, give the same triangular
cells for all sufficiently large $n$. The limiting equilateral
opposite-angle sum along each edge is $2\pi/3<\pi$, so these cells
are strictly Delaunay. Because the list consists of marked lifts,
the cells have the original ideal arc classes as well as the original
endpoint labels. Applying this conclusion at the finitely many
vertices of any fixed original face patch proves its persistence as
a marked target Delaunay patch.

\medskip\noindent\emph{The similarity and the canonical closed-face maps.}
On every persistent original edge $jk$, its target chord length equals
its intrinsic lambda length for diameter-one horospheres. Hence
Theorem~\ref{thm:ring} gives
\[
 |z_n(j)-z_n(k)|=\sqrt{r_n(j)r_n(k)}\longrightarrow r.
\]
The target triangles have the source orientation, since the canonical
intrinsic map and the planar projections preserve orientation. Put
$b=y(0)$ and $c=y(1)-y(0)$. Then $|c|=r$, and the positively oriented
equilateral triangle at $0,1,\om$ gives $y(\om)=b+c\om$.
Across a shared edge, the third vertex of the next equilateral
triangle is uniquely determined on the required side. Propagation
through the connected source triangulation gives
\begin{equation}\label{eq:microscopic-similarity-affine}
 y(j)=b+cj\quad(j\in\Lam),
 \qquad Y=(b+c\Lam)\cup\{\infty\}.
\end{equation}
Let $\Psi(z)=b+cz$ and let
$\widehat\Psi(z,t)=(b+cz,|c|t)$ be its hyperbolic extension.

Fix an original closed Euclidean face $\tau$, and let
$\overline\tau^{\,H}$ be the corresponding closed ideal face of
$\Sig$, with its three ideal vertices included. For all sufficiently
large $n$ its marked image is the corresponding target ideal face.
On this face $J_n$ is the intrinsic hyperbolic isometry determined by
the three ordered ideal vertices. The target triples converge to
the nondegenerate triple prescribed by
\eqref{eq:microscopic-similarity-affine}.

The unique three-point M\"obius transformation carrying the source
triple to the target triple, and its hyperbolic extension, depend
continuously on those distinct vertices. Their restrictions to
$\overline\tau^{\,H}$ are precisely these face isometries. They
converge to $\widehat\Psi$ on the closed ideal face. The target
circumcircles converge to the nondegenerate circumcircle of
$\Psi(\tau)$, so the supporting hemispheres and all these closed
faces stay in a bounded part of the closed upper half-space, away
from its ideal point at infinity. The convergence is consequently
uniform there in Euclidean coordinates.

Write $p_{\mathrm{vert}}$ for vertical projection. Its restriction to a closed
ideal face, including its vertices, is a homeomorphism onto the
closed Euclidean face. By the definition of the canonical map,
\[
 G_n|_\tau
 =p_{\mathrm{vert}}\circ J_n\circ(p_{\mathrm{vert}}|_{\overline\tau^{\,H}})^{-1}.
\]
The preceding convergence therefore gives
$G_n\to\Psi$ uniformly on $\tau$, including its edges and vertices.
Every compact source set is contained in a finite union of original
faces, so this convergence is uniform on every compact subset of
$\CC$. On each intrinsic face interior, the whole-boundary limit
$J$ equals $\widehat\Psi$. These interiors are dense in $\Sig$;
continuity gives $J=\widehat\Psi$ on the entire surface.

\medskip\noindent\emph{Nearest-spacing normalization.}
The nearest spacing in the incenter normalization is
\[
 \widehat\delta_n=\frac{\delta_n}{t_n}
 =\dist\bigl(z_n(0),X'_n\setminus\{z_n(0)\}\bigr).
\]
The fixed neighbor $1$ gives $\limsup_n\widehat\delta_n\le|c|$.
Choose a nearest point for each $n$. These points remain in a bounded
target disk. By \eqref{eq:microscopic-similarity-finite-labels}, a
subsequence realizing the lower limit has a further subsequence on
which that point is $z_n(j)$ for one fixed $j\ne0$. Equation
\eqref{eq:microscopic-similarity-affine} then gives a limiting
distance $|c|\,|j|\ge|c|$. Thus
$\widehat\delta_n\to|c|>0$.

It follows that
\[
 A_n(z)=\frac{G_n(z)-z_n(0)}{\widehat\delta_n}
 \longrightarrow\frac{c}{|c|}z
 \quad\text{uniformly on every compact source set}.
\]
Choose $\theta\in\RR$ with $c/|c|=e^{i\theta}$. The similarities
$z\mapsto(z-z_n(0))/\widehat\delta_n$ converge to the nondegenerate
similarity $z\mapsto(z-b)/|c|$, also uniformly on the ideal sphere.
Applying them to the spherical Hausdorff convergence of
$X'_n\cup\{\infty\}$ proves the asserted convergence of
$Z_n\cup\{\infty\}$.

For any compact target set $K$, its inverse images under these
similarities lie in one bounded set. Equation
\eqref{eq:microscopic-similarity-finite-labels} therefore gives the
required fixed finite set of labels for $Z_n\cap K$. Similarities
preserve the marked Delaunay cells, so the persistence already proved
also holds in this normalization. Composing $J=\widehat\Psi$ with
the limiting normalization gives the hyperbolic extension of
$z\mapsto e^{i\theta}z$ on the whole intrinsic boundary. These
observations give, respectively, the finite-label conclusion, the
marked patch conclusion, and the whole-boundary conclusion; the
closed-face argument gives the claimed convergence of the maps.
Every extraction began with an arbitrary subsequence.
\end{proof}

\section{Proof of uniform convergence on the closed triangle}\label{sec:08-main-proof}

We keep the notation for the canonical maps and the closed reference
triangle. We combine compactness on that triangle with microscopic
similarity to identify every subsequential limit. The convergence
obtained includes all three sides and their marked corners.

\begin{theorem}\label{thm:main}
Let $\Omega\subset\CC$ be a bounded Jordan domain with three distinct
positively ordered marked boundary points $p,q,r$. Let
$(P_n,T_n,h_n;p_n,q_n,r_n)$ be any sequence of marked equilateral disks
such that $h_n\to0$, their Jordan boundaries converge uniformly to
$\partial\Omega$, and $(p_n,q_n,r_n)\to(p,q,r)$.
Then the canonical discrete uniformization maps
$f_n:\Tri\to P_n$ converge uniformly on the entire closed triangle
$\Tri$ to the normalized Riemann map
$\phi:\Tri\to\overline\Omega$ with the prescribed corner values.
\end{theorem}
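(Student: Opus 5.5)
By Theorem~\ref{thm:closed-compactness}, every subsequence of $(f_n)$ has a further subsequence with $f_n\to F$ uniformly on $\Tri$. Here $F:\Tri\to\overline\Omega$ is a marked homeomorphism, and $g_n\to F^{-1}$ locally uniformly on $\Omega$. By uniqueness in Theorem~\ref{thm:jordan-riemann}, it therefore suffices to show that $G:=F^{-1}|_\Omega$ is conformal. Indeed, $F$ is then the homeomorphic extension of a conformal map $\operatorname{int}\Tri\to\Omega$ with the prescribed corner values, hence $F=\phi$. Since every subsequence has a further subsequence converging to $\phi$, the whole sequence converges uniformly on $\Tri$. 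So the entire proof reduces to proving that the subsequential limit $G$ is holomorphic on $\Omega$.

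To prove this, I would work with the affine interpolants $\widetilde g_n$ of $g_n$ on the original triangulation $T_n$. These are the piecewise-linear maps with $\widetilde g_n(v)=g_n(v)$ at each source vertex. The canonical $g_n$ is not affine, but Theorem~\ref{thm:microscopic} shows that $g_n$ is, up to $o(\delta_n)$, a rotation-similarity on each fixed microscopic patch, with $\delta_n$ comparable to the local canonical scale. Uniform equicontinuity, i.e.\ the modulus \eqref{eq:compactness-source-annulus}, then gives $\|\widetilde g_n-g_n\|_{\infty,K}\to0$ on compact $K\Subset\Omega$. Thus $\widetilde g_n\to G$ locally uniformly.

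The key claim is that $\int_K|\bpartial\widetilde g_n|^2\to0$ while $\int_K|D\widetilde g_n|^2$ stays bounded. I would argue by contradiction with a blow-up. Set
\[
 \epsilon_n(\tau)=\frac{|\bpartial\widetilde g_n|_\tau|}{|D\widetilde g_n|_\tau|}
\]
on each face $\tau$. If some faces had $\epsilon_n(\tau_n)\ge\epsilon>0$, I would recenter at a vertex of $\tau_n$. The distance of $\tau_n$ to $\partial P_n$ divided by $h_n$ tends to infinity because $K$ is compact in $\Omega$. Theorem~\ref{thm:microscopic} then shows that the rescaled vertex images converge to $e^{i\theta}\Lam$, so $\tau_n$ is eventually nearly equilateral with the same orientation, forcing $\epsilon_n(\tau_n)\to0$. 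Hence $\sup_{\tau\subset K}\epsilon_n(\tau)\to0$, and every face is mapped orientation-preservingly with $J_{\widetilde g_n}>0$, so $\widetilde g_n$ has nearly conformal affine pieces. By the local degree argument of \cite[Proposition~2.30]{hatcher2002}, the sum of local degrees equals the degree of $\widetilde g_n|_{\partial K'}$ about a generic target point, which is $1$ for $n$ large since $\widetilde g_n$ is uniformly close to the homeomorphism $G$ on a Jordan domain $K'$. So $\widetilde g_n$ is at most one-to-one almost everywhere near $K$. This gives
\[
 \int_K|D\widetilde g_n|^2\le\frac{2(1+o(1))}{1-o(1)}\int_KJ_{\widetilde g_n}\le C\,\Area(\text{neighborhood of }G(K')),
\]
and consequently $\int_K|\bpartial\widetilde g_n|^2\le\sup\epsilon_n^2\int_K|D\widetilde g_n|^2\to0$.

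With bounded energy, a subsequence converges weakly in $W^{1,2}(K)$, and the limit must be $G$. Weak lower semicontinuity yields $\bpartial G=0$ in $L^2(K)$. Weyl's lemma then makes $G$ holomorphic on $\operatorname{int}K$; exhausting $\Omega$ gives conformality, since $G$ is a homeomorphism.

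The main obstacle is the blow-up step. Theorem~\ref{thm:microscopic} is stated for a sequence of centers, so it must be applied to the worst face in $K$ for each $n$; the arbitrary choice of $v_n$ there allows this. One must also check that the rescaling by $\delta_n$ versus $|D\widetilde g_n|h_n$ is consistent. The ratio $\epsilon_n$ is scale-invariant, so only the nondegeneracy of the limiting similarity is needed, and the theorem provides it.
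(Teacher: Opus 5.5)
Your proposal is correct and follows essentially the paper's proof. The paper also uses closed-domain compactness from Theorem~\ref{thm:closed-compactness}, a contradiction argument applying Theorem~\ref{thm:microscopic} at moving roots to make every affine face near $K$ nearly equilateral with the correct orientation, a degree count giving multiplicity at most one and hence bounded energy with $\int|\bpartial \widetilde g_n|^2\to0$, and then $\bpartial G=0$ followed by the normalization and subsequence argument. The only differences are cosmetic: you bound $\widetilde g_n-g_n$ via equicontinuity of $g_n$ rather than uniform continuity of $G$, and you pass to the limit by weak $W^{1,2}$ compactness rather than the paper's direct identity $\int G\,\bpartial\psi=-\lim\int(\bpartial \widetilde g_n)\psi=0$.
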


\begin{proof}[Proof of Theorem~\ref{thm:main}]
We first use microscopic similarity to control the signed shapes of all
original faces in a physical compact subset of $\Omega$. We then compare
the actual inverse maps with their affine interpolants, use degree to
bound the affine area and energy, and prove that every homeomorphic limit
is conformal. Closed-triangle compactness and normalization will identify
the limit of the full sequence. Set $g_n=f_n^{-1}$.

\medskip\noindent
\emph{Physical compact subsets lie deep in the source triangulations.}
Choose homeomorphic parametrizations
$\gamma_n:S^1\to\partial P_n$ and
$\gamma:S^1\to\partial\Omega$ with $\gamma_n\to\gamma$ uniformly,
taking $\gamma$ positively oriented. Put
$\eta_n=\sup_{S^1}|\gamma_n-\gamma|$.
Fix a nonempty compact set $K\Subset\Omega$, and set
\[
 d_K=\dist(K,\partial\Omega),
 \qquad
 K^+=\{z\in\CC:\dist(z,K)\le d_K/4\}.
\]
Then $d_K>0$ and $K^+\Subset\Omega$. Once $\eta_n<d_K/4$, the
straight homotopy from $\gamma$ to $\gamma_n$ avoids every point of
$K^+$. The winding number about each such point remains one. The Jordan
curve $\partial P_n$ therefore encloses $K^+$, so
$K^+\subset\operatorname{int}P_n$. Moreover,
\[
 \dist(K,\partial P_n)\ge d_K-\eta_n.
\]
An original face $\tau$ meeting $K$ has diameter $h_n$. For all
sufficiently large $n$, it follows that
\begin{equation}\label{eq:main-proof-face-depth}
 \tau\subset K^+,
 \qquad
 \dist(\tau,\partial P_n)
 \ge d_K-\eta_n-h_n\ge d_K/2.
\end{equation}
These conclusions hold uniformly over all faces meeting $K$.

\medskip\noindent
\emph{The signed face ratios converge uniformly.}
For any counterclockwise ordering $(z_0,z_1,z_2)$ of an original face,
define
\[
 R_n(z_0,z_1,z_2)
 =\frac{g_n(z_2)-g_n(z_0)}{g_n(z_1)-g_n(z_0)}.
\]
Its denominator is nonzero because $g_n$ is a homeomorphism. The source
ratio $(z_2-z_0)/(z_1-z_0)$ equals $\om$. In the following maximum,
$\tau$ ranges over all original faces meeting $K$, and
$(z_0,z_1,z_2)$ ranges over all counterclockwise orderings of its vertices.
We claim that
\begin{equation}\label{eq:main-proof-face-shape}
 \epsilon_n(K)
 :=\max_{\tau,(z_0,z_1,z_2)}
       |R_n(z_0,z_1,z_2)-\om|
 \longrightarrow0.
\end{equation}

Suppose this fails. After restricting to a subsequence, choose an ordered
face $(z_{0,n},z_{1,n},z_{2,n})$ for each $n$ such that
\[
 |R_n(z_{0,n},z_{1,n},z_{2,n})-\om|\ge\varepsilon_0
\]
for one fixed $\varepsilon_0>0$. Set
\[
 v_n=z_{0,n},
 \qquad
 \zeta_n=\frac{z_{1,n}-z_{0,n}}{h_n},
 \qquad
 Q_n=\frac{P_n-v_n}{h_n\zeta_n}.
\]
Here $|\zeta_n|=1$, and the ordered face becomes $(0,1,\om)$.
Across a shared edge, the third vertex of an equilateral face is
determined by the side of that edge on which it lies. Propagating through
the connected face adjacency of the disk shows that $Q_n$ is a subcomplex
of the unit triangular lattice. By
\eqref{eq:main-proof-face-depth}, $v_n$ is an interior vertex and
\[
 \overline B\left(0,\frac{d_K}{3h_n}\right)\subset Q_n.
\]
The contained radii tend to infinity, so
Theorem~\ref{thm:microscopic} applies to this sequence of moving roots.

Let $X_n$ be the reflected target vertex set, and use the normalization
of that theorem:
\[
 w_n=g_n(v_n),
 \qquad
 \delta_n=\dist(w_n,X_n\setminus\{w_n\}),
 \qquad
 A_n(z)=\frac{g_n(v_n+h_n\zeta_n z)-w_n}{\delta_n}.
\]
On a further subsequence, for some $\theta\in\RR$,
\[
 A_n(0)=0,
 \qquad A_n(1)\longrightarrow e^{i\theta},
 \qquad A_n(\om)\longrightarrow e^{i\theta}\om.
\]
The limiting denominator has modulus one. Hence
\[
 R_n(z_{0,n},z_{1,n},z_{2,n})
 =\frac{A_n(\om)}{A_n(1)}\longrightarrow\om,
\]
contrary to the choice of the faces. This proves
\eqref{eq:main-proof-face-shape}. The offending ordering was part of the
choice, so the conclusion is uniform over every counterclockwise
ordering as well as every face meeting $K$.

\medskip\noindent
\emph{The actual inverses and their affine interpolants have the same limit.}
Start with any subsequence of the original sequence.
Theorem~\ref{thm:closed-compactness} gives a further subsequence, still
indexed by $n$, and a marked homeomorphism
$F:\Tri\to\overline\Omega$ such that
\[
 f_n\longrightarrow F\quad\hbox{uniformly on }\Tri.
\]
Set $G=F^{-1}$. The same theorem gives
\begin{equation}\label{eq:main-proof-inverse-limit}
 g_n\longrightarrow G
 \quad\hbox{uniformly on every compact subset of }\Omega.
\end{equation}
The positive cyclic order of the three boundary marks implies that $F$
and $G$ preserve orientation.

Define $L_n:P_n\to\Tri$ by setting $L_n(v)=g_n(v)$ at every original
vertex and extending by real affine interpolation on each original
closed face. The affine traces agree on shared edges, so $L_n$ is
continuous. Convexity of $\Tri$ gives $L_n(P_n)\subset\Tri$.
For $x$ in a face with vertices $z_0,z_1,z_2$, let
$t_0,t_1,t_2\ge0$ be its barycentric coordinates. Thus
\[
 \sum_{j=0}^2t_j=1,
 \qquad L_n(x)=\sum_{j=0}^2t_jg_n(z_j),
 \qquad
 |L_n(x)-g_n(x)|\le\max_{0\le j\le2}|g_n(z_j)-g_n(x)|.
\]
For a compact $K$ as above, the faces meeting $K$ lie in $K^+$ for large
$n$. Consequently,
\[
 \sup_K|L_n-g_n|
 \le 2\sup_{K^+}|g_n-G|
    +\sup_{\substack{x,y\in K^+\\|x-y|\le h_n}}|G(x)-G(y)|
 \longrightarrow0.
\]
Here the first term tends to zero by
\eqref{eq:main-proof-inverse-limit}, and the second by uniform continuity
of $G$ on $K^+$. We have proved
\begin{equation}\label{eq:main-proof-affine-limit}
 L_n\longrightarrow G\quad\hbox{locally uniformly on }\Omega.
\end{equation}

\medskip\noindent
\emph{The affine antiholomorphic derivatives are small relative to the
holomorphic derivatives.}
Fix concentric open disks $U',U$ with
$\overline{U'}\subset U$ and $\overline U\Subset\Omega$.
For large $n$, all faces meeting $\overline U$ satisfy
$\epsilon_n(\overline U)<\sqrt3/2$. On such a face, apply direct
similarities in the source and target to send its ordered vertices and
their values to
\[
 (0,1,\om)\longmapsto(0,1,R),
 \qquad R=R_n(z_0,z_1,z_2).
\]
The normalized affine map is $z\mapsto\alpha z+\beta\overline z$, with
\[
 \alpha+\beta=1,
 \qquad \alpha\om+\beta\overline\om=R,
 \qquad
 \alpha=\frac{R-\overline\om}{\om-\overline\om},
 \qquad
 \beta=\frac{\om-R}{\om-\overline\om}.
\]
Since $|\om-\overline\om|=\sqrt3$, we obtain
\[
 \frac{|\beta|}{|\alpha|}
 \le\frac{|R-\om|}{\sqrt3-|R-\om|}
 \le k_n,
 \qquad
 k_n:=\frac{\epsilon_n(\overline U)}
              {\sqrt3-\epsilon_n(\overline U)}\longrightarrow0.
\]
For large $n$ we have $k_n<1$ and $\alpha\ne0$. Direct similarities
preserve the absolute ratio of antiholomorphic to holomorphic
derivatives. Therefore, almost everywhere on $U$,
\begin{equation}\label{eq:main-proof-affine-distortion}
 |\bpartial L_n|\le k_n|\partial L_n|,
 \qquad
 J_{L_n}=|\partial L_n|^2-|\bpartial L_n|^2>0.
\end{equation}
For each fixed $n$, the continuous map $L_n$ on the finite triangulation
belongs to $W^{1,2}(U)$. Its weak derivatives are the facewise affine
derivatives: integration by parts on the faces cancels the internal-edge
terms because their traces agree.

\medskip\noindent
\emph{Degree bounds the area and the energy on $U'$.}
The image $G(U)$ is a Jordan domain with boundary $G(\partial U)$.
Set
\[
 d_*:=\dist\bigl(G(\overline{U'}),G(\partial U)\bigr)>0,
 \qquad
 e_n:=\sup_{\overline U}|L_n-G|\longrightarrow0.
\]
Take $n$ large enough that $e_n<d_*/3$. For any
$y\in L_n(\overline{U'})$, choose $x\in\overline{U'}$ with
$y=L_n(x)$. The segment from $G(x)$ to $y$ stays within distance $e_n$
of $G(x)$ and avoids $G(\partial U)$. It follows that $y\in G(U)$.
For every $z\in\partial U$ and $t\in[0,1]$, we also have
\[
 \bigl|(1-t)G(z)+tL_n(z)-y\bigr|
 \ge |G(z)-G(x)|-t|L_n(z)-G(z)|-|L_n(x)-G(x)|
 \ge d_*-2e_n>0.
\]
Thus the straight boundary homotopy avoids every such $y$. Invariance of
winding number and the positive orientation of $G$ give
\begin{equation}\label{eq:main-proof-degree}
 \deg(L_n,U,y)=\deg(G,U,y)=1
 \qquad\bigl(y\in L_n(\overline{U'})\bigr).
\end{equation}
The estimate is uniform in $x$, so this assertion includes target points
depending on $n$. It uses only the winding number of the continuous
boundary loop $L_n|_{\partial U}$.

Let $\mathcal E_n$ be the union of the images under $L_n$ of all original
mesh edges. This is a finite union of line segments and has planar area
zero. If $y\in L_n(\overline{U'})\setminus\mathcal E_n$, each preimage
in $U$ lies in the interior of an original face. There are finitely many
such preimages, and each has local degree $+1$ by
\eqref{eq:main-proof-affine-distortion}. Equation
\eqref{eq:main-proof-degree} counts their sum, so there is exactly one
preimage in $U$. In particular, the multiplicity
\[
 m_n(y):=\#\{x\in U':L_n(x)=y\}
\]
is at most one for almost every $y\in\CC$, and it is zero outside
$\Tri$. Applying affine change of variables on each measurable set
$U'\cap\operatorname{int}\tau$ and summing over the original faces
therefore gives
\begin{equation}\label{eq:main-proof-area}
 \int_{U'}J_{L_n}\dd A
 =\int_{\CC}m_n(y)\dd A(y)
 \le\Area(\Tri)=\frac{\sqrt3}{4}.
\end{equation}

Combining \eqref{eq:main-proof-affine-distortion} and
\eqref{eq:main-proof-area}, we obtain
\begin{equation}\label{eq:main-proof-energy}
 \begin{aligned}
 \int_{U'}|\partial L_n|^2\dd A
 &\le\frac{\Area(\Tri)}{1-k_n^2},\\
 \int_{U'}|\bpartial L_n|^2\dd A
 &\le\frac{k_n^2}{1-k_n^2}\Area(\Tri)\longrightarrow0.
 \end{aligned}
\end{equation}
The identity $|DL_n|^2=2(|\partial L_n|^2+|\bpartial L_n|^2)$ also yields
\[
 \int_{U'}|DL_n|^2\dd A
 \le\frac{2(1+k_n^2)}{1-k_n^2}\Area(\Tri).
\]

\medskip\noindent
\emph{The limit is conformal and determines the full sequence.}
For $\psi\in C_c^\infty(U')$, local uniform convergence in
\eqref{eq:main-proof-affine-limit}, integration by parts, and
\eqref{eq:main-proof-energy} give
\[
 \int_{U'}G\,\bpartial\psi\dd A
 =\lim_{n\to\infty}\int_{U'}L_n\,\bpartial\psi\dd A
 =-\lim_{n\to\infty}\int_{U'}(\bpartial L_n)\psi\dd A=0.
\]
The last limit is zero by the Cauchy--Schwarz inequality. Every disk
compactly contained in $\Omega$ can serve as $U'$, so
$\bpartial G=0$ in distributions throughout $\Omega$.
On a smaller disk, convolving $G$ with smooth approximate identities
produces smooth solutions of the Cauchy--Riemann equation. These are
holomorphic and converge locally uniformly to the continuous map $G$.
Passing to the Cauchy integral formula shows that $G$ is holomorphic.

An injective holomorphic map has nonvanishing derivative. Thus
$F=G^{-1}$ is conformal on $\operatorname{int}\Tri$. Its continuous
extension has the prescribed three corner values, so uniqueness of the
normalized Riemann map gives $F=\phi$ on the entire closed triangle.
Here microscopic similarity and the degree estimate establish interior
conformality, while Theorem~\ref{thm:closed-compactness} supplies the
homeomorphic limit and convergence up to all boundary points.

We began with an arbitrary subsequence. Each such subsequence has a
further subsequence converging uniformly on $\Tri$ to $\phi$.
If the full sequence did not converge uniformly, some subsequence would
satisfy
\[
 \sup_{z\in\Tri}|f_n(z)-\phi(z)|\ge\varepsilon_1
\]
for one fixed $\varepsilon_1>0$. Its further uniformly convergent
subsequence contradicts this inequality. Hence $f_n\to\phi$ uniformly
on the entire closed triangle.
\end{proof}

\section*{Acknowledgements}
The first author is supported by NSFC (No.~12571055), NSFHN
(No.~2022JJ10059) and Scientific Research Program of NUDT (No.~JS2023-01).

\section*{Declaration on the use of generative AI}
The authors used a generative AI assistant for language editing and
typographical consistency checks during the preparation of this manuscript.
All mathematical results, including the construction, proofs, and their
verification, were produced by the authors. The authors have reviewed the
final manuscript in its entirety and take full responsibility for its
content.

\vspace*{1.5em}
\noindent
Chunlei Liu, \texttt{7357@cnu.edu.cn}\\
School of Mathematical Sciences, Capital Normal University,
Beijing 100048, P.~R.~China.

\vspace*{0.6em}
\noindent
Aijin Lin, \texttt{linaijin@nudt.edu.cn}\\
College of Science, National University of Defense Technology,
Changsha 410073, P.~R.~China.

\end{document}